\theoremstyle{thmstyleone}%
\newtheorem{theorem}{Theorem}
\newtheorem{proposition}{Proposition}
\theoremstyle{thmstyletwo}%
\newtheorem{remark}{Remark}%
\newtheorem{assumption}{Assumption}
\theoremstyle{thmstylethree}%
\begin{document}

\title[-]{\vspace{-3.5em}\noindent\normalfont\large\itshape Accepted for publication in the Journal of Nonlinear Science.\\[2.5em] \bfseries On a Stochastic PDE Model for Epigenetic Dynamics}



\author[1]{\fnm{Pablo} \sur{Padilla-Longoria}}\email{pablo@mym.iimas.unam.mx}
\equalcont{These authors contributed equally to this work.}

\author*[2]{\fnm{Jesus} \sur{Sierra}}\email{jesus.sierra@cimat.mx}
\equalcont{These authors contributed equally to this work.}

\affil[1]{\orgdiv{IIMAS}, \orgname{Universidad Nacional Autonoma de Mexico
(UNAM)}, \orgaddress{\street{Ciudad Universitaria}, \city{Mexico City}, \postcode{04510}, \country{Mexico}}}

\affil*[2]{\orgname{CIMAT}, \orgaddress{\street{De Jalisco s/n}, \city{Gto.}, \postcode{36023}, \state{Guanajuato}, \country{Mexico}}}

\abstract{
We propose a stochastic model to investigate epigenetic mutations, i.e., modifications of the genetic information that control gene expression patterns in a cell but do not alter the DNA sequence. Epigenetic mutations are related to environmental fluctuations, which leads us to consider (additive) noise as the driving element for such mutations (noise-induced transitions in Waddington's epigenetic landscape). We focus on two applications: firstly, molecular biochemistry of cancer immunology involving macrophages' epigenetic modifications, where we show the relevance of random perturbations in the tumor microenvironment, and secondly, cell fate determination and mutation of the flower Arabidopsis thaliana. Due to the complexities of cancer biology for the first case, we present the details in \cite{sierra2025tumor} since our principal objective here is to validate our system as an appropriate epigenetic model for more general biological applications, with emphasis on mathematical oncology and developmental biology; for such results, we rely on the theory of Stochastic PDE, theory of large deviations, and ergodic theory. Moreover, since epigenetic mutations are reversible, a fact currently exploited to develop so-called epi-drugs to treat diseases such as cancer, we also investigate an optimal control problem for our system to study the reversal of epigenetic mutations; our control problem is also relevant for studying epigenetic stabilizers and transcription factors in immunotherapies for cancer \cite{sierra2025tumor}.
}

\keywords{Epigenetics; Cancer Immunology; Mathematical Oncology; Stochastic PDE; Large Deviation Theory; Stochastic Control}

\pacs[Mathematics Subject Classification]{92D10, 35R60, 60H15, 37N25, 49N90}

\maketitle

\section{Introduction }

In this paper, we propose a stochastic model for epigenetic dynamics. As mentioned above, one of the primary applications of our model is in understanding why the immune system does not `attack' cancer and how to overcome this problem. Due to the distinct characteristics of cancer biology, we present these results in \cite{sierra2025tumor}. The goal of this manuscript is to provide the mathematical validation of the stochastic epigenetic model introduced in \cite{sierra2025tumor}. Our study requires a (global) well-posed solution as a starting point. However, we emphasize that our principal objective is the rigorous validation of the noise-induced dynamics (predicting stochastic phenomena) in such solution leading to epigenetic mutations; since this phenomenon may be complicated to observe (and validate) in a numerical context, we believe that a mathematical approach is fundamental, in particular for the delicate applications that we consider, such as cancer immunology \cite{sierra2025tumor}: our model, along with the stochastic theory presented here, explain why the immune system (particularly macrophages) does not attack cancer but instead helps tumor progression and development. 

We show that our model can be used to study more general biological problems involving epigenetics, especially in oncology and developmental biology. In particular, we model the cell fate determination and (epigenetic) mutation of Arabidopsis thaliana through a stochastic reaction-diffusion system governed by a potential field and additive noise. The potential mimics the flower's epigenetic landscape as defined by Waddington, and the noise represents environmental fluctuations. We show through numerical simulations that the system eventually exits the local minima, traversing the epigenetic landscape in the spatial order that, in many of the realizations, corresponds to the correct architecture of the flower, that is, following the observed geometrical features of the meristem. We use the theory of large deviations to estimate the exit time, characterize the associated invariant measure, and discuss the phenotypic implications. We also investigate an optimal control problem for our system to study the reversal and stabilization of epigenetic mutations. 

There are approximately 250,000 species of flowering plants (Angiosperms).
The organs of the flower in most of them (the only known exception
being the flower Lacandonia schismatica) are organized in four
concentric rings (whorls): sepals, petals, stamens, and carpels (from
the outer rim to the center).

We work with Arabidopsis thaliana, the first plant whose complete
genome was sequenced and has been extensively studied \cite{feldmann2014first}.
In \cite{alvarez2008floral}, using experimental data, the authors
obtained the gene regulatory network (GRN) that determines the fate
of floral organ cells in Arabidopsis thaliana. Based on this
model, Cort{\'e}s-Poza and Padilla-Longoria
constructed a system of reaction\textendash diffusion equations governed
by a potential field corresponding to the epigenetic landscape of
the flower\textquoteright s organ formation \cite{cortes2022variational}. In this work, we want
to introduce perturbations due to epigenetic factors, particularly
environmental fluctuations, into the system as additive noise; our
goal is to study this additional mechanism in connection with mutations
observed in Arabidopsis thaliana. 

We consider an energy landscape with isolated minima. Furthermore,
the deterministic part of the dynamics drives the system by the steepest
descent to the vicinity of one of these minima, where it remains
for a very long time. We will see that the random perturbations push
the system significantly up and away from this minimum. After some
time, the system will manage to escape the basin of attraction of
the minimum it is currently in and find its way toward the location
of another minimum; we identify this transition as an epigenetic mutation.  

We will use the theory of large deviations (see \cite{freidlin1998random})
to estimate the time of escape from the location of one minimum to
another, which is exponentially long relative to the height of the
energy barrier between these minima measured in units of the amplitude
of the random perturbation. We remark that, no matter how small the amplitude of the random perturbation is, we can prove that such exit (noise-induced transition or epigenetic mutation) will occur with probability equal to 1 (Theorem \ref{thm:Thm_large_dev}), with the exit time inversely proportional to the amplitude of the random perturbation: this rigorous result is of significance since environmental perturbations are typically small and we expect the transitions (mutations) to take a very long time to happen, something that would be difficult to observe (and validate) in numerical computations. 

The theory of large deviations also provides information about the paths of maximum likelihood by which the transitions between minima occur (Theorem \ref{thm:Thm_large_dev} (iii)); this is important since the transitions should occur at particular points of the landscape determined by the genetic information of the biological system under consideration. 

It is also relevant to analyze the system's behavior as the noise vanishes, in which case we should expect the system's dynamics to get more and more concentrated around the minima of the landscape; we show this by studying the invariant measure. (Theorem \ref{thm:Thm_large_dev} (ii))

As mentioned before, our results can be applied to study
the connection between environmental fluctuations and diseases such as
Alzheimer's and cancer. It is
now recognized that epigenetics plays a role in the development of
cancer (carcinogenesis) \cite{cancerepig}; see also \cite{esteller2008epigenetics,sharma2010epigenetics}.
For example, abnormal epigenetic modifications in specific oncogenes
and tumor suppressor genes can result in uncontrolled cell growth
and division that can cause cancer. Besides, epigenetic alterations
in regions of DNA outside of genes can also give rise to cancer. It
is now accepted that the environment and human behavior are the principal
causes of abnormal epigenetic modifications. 

Unlike genetic mutations, epimutations are reversible, which gives
the possibility to reverse the epimutations in cancer cells through
the so-called epi-drugs \cite{kristensen2009epigenetics}. The targets
of epigenetic therapy are the enzymes involved in epigenetic modifications.
This motivates the introduction of the theory of control into our
system: our model can help to find and characterize the elements (controls)
needed for such reversal (see, e.g., \cite{lu2021mathematical}).

On the other hand, in \cite{sierra2025tumor} and using the model proposed in this manuscript, we study the epigenetic mechanism (related to the tumor microenvironment, TME)
responsible for increasing tumor-associated macrophages that promote
the occurrence and metastasis of tumor cells, support tumor angiogenesis,
inhibit T cell-mediated anti-tumor immune response, and lead to tumor
progression. 

Macrophages are particularly interesting to study from a stochastic
analysis point of view due to their plasticity in response to environmental
signals. The functional differences of macrophages are closely related
to their plasticity. Moreover, molecules in TMEs are responsible for
regulating macrophages' functional phenotypes.
Such molecular signals are so diverse and random that we consider
it fit to treat them as Gaussian noise that increases in magnitude
as the tumor progresses. Under this assumption, our mathematical model
shows that most tumor-associated macrophages (TAMs) get eventually
polarized into macrophages with phenotypes that favor cancer development
through a process that
we call noise-induced polarization (see Fig. \ref{fig:ep_stab} right). Moreover, following our results related to stochastic optimal control, we propose a mechanism
to promote the appropriate epigenetic stability for immunotherapies
involving macrophages, which includes p53 and APR-246 (eprenetapopt); see Fig. \ref{fig:ep_stab} left. 

\begin{figure}[H]
\begin{centering}
\begin{tabular}{cc}
\includegraphics[scale=0.55]{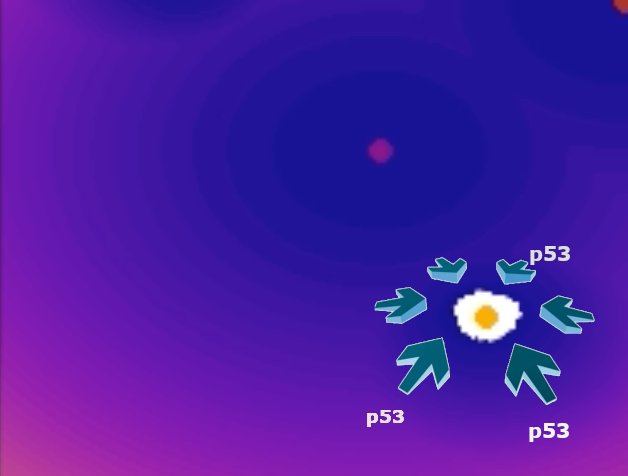} & \includegraphics[scale=0.55]{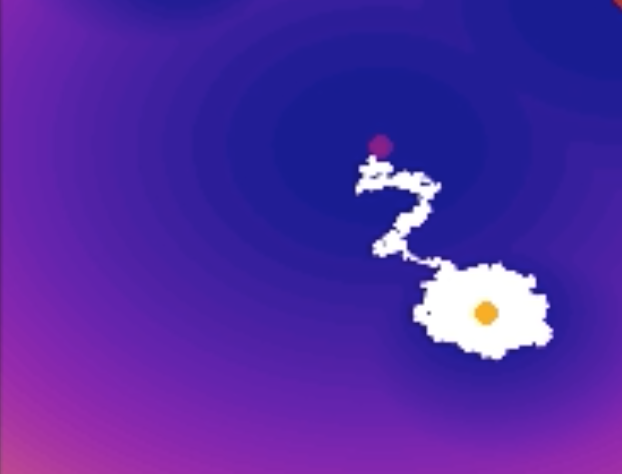}\tabularnewline
\end{tabular}
\par\end{centering}
\caption{The effect of p53 on the epigenetic evolution of macrophages in a
TME. Left: p53 stabilizes the epigenetic M1 (anti-tumoral) state. Right: in the absence
of an epigenetic stabilizer, the macrophage polarizes into a pro-tumoral
state.\label{fig:ep_stab} }

\end{figure}

Finally, we emphasize that the paper's mathematical novelty is not a new large-deviations theorem. Rather, our contribution is the construction and validation of an epigenetic SPDE model for which the abstract Freidlin theory becomes applicable. This requires identifying a biologically meaningful potential landscape, proving that it satisfies the hypotheses of the general theory, interpreting the invariant measure and exit-time asymptotics in terms of epigenetic mutations, and integrating these results with stochastic optimal control.

The rest of the paper is organized as follows. Section \ref{sec:Stoch_model} introduces the stochastic epigenetic model. In Section \ref{sec:Main_results},
we present our main results. We review the necessary theory and notation
in Section \ref{sec:Preliminaries} and demonstrate our results in
Section \ref{sec:Proof_results}. Finally, Section \ref{sec:Numerical_simulations} shows our numerical simulations.

\section{Stochastic Epigenetic Model}\label{sec:Stoch_model}

We model the epigenetic landscape of the flower as a potential field
with four different basins of attraction, each corresponding to a
different flower organ (sepals, petals, stamens, and carpels). For
the size of each basin, we will use the reciprocal of the number of
(dynamically observed) initial conditions that land in each steady
state of the dynamical system (see \cite{cortes2022variational});
this guarantees that equilibrium points that are reached more often
will have larger basins, and conversely, equilibrium points that are
reached fewer times will have a smaller basin. The centers of the
basins are located in $\mathbb{R}^{2}$; see \cite{cortes2022variational}. 

We now define the potential field on the plane $(u,v)$ determined by the epigenetic landscape in the following way:
\begin{equation}
\label{eq:Pot_F}
F(u,v) = \min_{1 \le i \le 4} \left\{ a_{i} \left[ (u-u_{i})^{2} + (v-v_{i})^{2} \right] \right\},
\end{equation}
where $(u_{1},v_{1})$, $(u_{2},v_{2})$, $(u_{3},v_{3})$, and $(u_{4},v_{4})$ are the centers of the basins, and $a_{1}, a_{2}, a_{3}, a_{4}$ define the size of each basin. As in \cite{cortes2022variational}, we consider the smooth (mollified) version of $F$. Moreover, we use a Gaussian filter to smooth out $F$ in our numerical simulations.

\begin{remark}\label{rem_F}
The mollified landscape $F_{\varepsilon}$ satisfies:
\begin{enumerate}[label=\arabic*.]
\item $F_{\varepsilon} \in C^{3}(\mathbb{R}^{2})$,
\item $F_{\varepsilon}(u,v) \rightarrow \infty$ as $|(u,v)| \rightarrow \infty$,
\item The minima are isolated,
\item The Hessian at every minimum is positive definite.
\end{enumerate}
Since the original landscape $F$ is piecewise quadratic with isolated minima, each minimum is realized by a unique quadratic branch in a neighborhood of the minimum. Consequently, for sufficiently small Gaussian mollification parameter $\varepsilon$, the mollified potential $F_{\varepsilon} = G_{\varepsilon} \ast F$ is a $C^{\infty}$ perturbation of the same quadratic polynomial in that neighborhood. Standard properties of mollifiers imply convergence in the $C^{2}$-topology on compact subsets. Therefore, the Hessian at each minimum remains positive definite for sufficiently small $\varepsilon$. Since nondegenerate critical points are structurally stable under small $C^{2}$-perturbations, the minima remain isolated.
\end{remark}

Define $\tilde{f}=-\partial F_{\varepsilon}\left(u,v\right)/\partial u$, $\tilde{g}=-\partial F_{\varepsilon}\left(u,v\right)/\partial v$.
We study the following stochastic reaction-diffusion system with periodic
boundary conditions on $\mathcal{O}=\left[0,R\right],\ R>0$, and
with homogeneous Neumann boundary conditions on $\mathcal{O}=\left(0,R\right)$:
\begin{align}
\frac{\partial u}{\partial t}\left(t,x\right)= & d_{1}\frac{\partial^{2}u}{\partial x^{2}}\left(t,x\right)+\tilde{f}\left(u\left(x,t\right),v\left(x,t\right)\right)+\sigma_{1}w_{1}\left(x,t\right),\nonumber \\
\frac{\partial v}{\partial t}\left(t,x\right)= & d_{2}\frac{\partial^{2}v}{\partial x^{2}}\left(t,x\right)+\tilde{g}\left(u\left(x,t\right),v\left(x,t\right)\right)+\sigma_{2}w_{2}\left(x,t\right),~x\in\mathcal{O},~t>0,\label{eq:sys_epig}
\end{align}
where $d_{1},d_{2}>0$ are diffusion constants and $\sigma_{1}$,
$\sigma_{2}>0$ represent the magnitude of the $noise$. Furthermore,
we consider a white noise perturbation: $w_{k}\left(t,x\right)=\partial^{2}\hat{w}_{k}\left(t,x\right)/\partial t\partial x$,
$k=1,2$, where $\hat{w}_{k}\left(t,x\right)$ are independent Brownian
sheets. We will focus on the case $\sigma_{1}=\sigma_{2}=\sigma>0$
and write $u^{\sigma}\left(t,x\right)=\left(u\left(t,x\right),v\left(t,x\right)\right)$
for the corresponding solution of (\ref{eq:sys_epig}).

We call $M\subset\mathbb{R}^{2}$ a phenotype (or cellular type) region
of the epigenetic landscape if $M$ contains a unique center $\left(u_{k},v_{k}\right)$,
$k\in\left\{ 1,\ldots,4\right\} $, $M$ is closed and convex (hence
simply connected), and $\left(\tilde{f},\tilde{g}\right)$ points
strictly into $M$ on $\partial M$. 

In the deterministic dynamics, each phenotype region coincides with an invariant neighborhood of one minimum of the epigenetic landscape. These regions lie within the corresponding basin of attraction, and every deterministic trajectory that starts inside remains there for all positive time; see Fig. \ref{fig:sigma_zero}.

As in Definition 14.5 \cite{smoller2012shock}, a closed subset, $\Sigma\subset\mathbb{R}^{2}$,
is an invariant region for the deterministic epigenetic system if
any solution $u^{\sigma=0}\left(t,x\right)$ having all of its boundary
and initial values in $\Sigma$, satisfies $u^{\sigma=0}\left(t,x\right)\in\Sigma$
for all $x\in\mathcal{O}$ and $t>0$.

Later, we will show (using the theory of large deviations) that our
epigenetic model can generate a mutation process. Our goal afterward
is to study a mechanism (control) capable of reversing such mutation;
this control's objective will be to return the system from the mutated
state caused by environmental fluctuations. Hence, we look for controls
capable of moving the system state from a mutated (neighborhood of
a) basin to a specific non-mutated one in the epigenetic potential.
This approach can produce significant information for developing therapies
and the so-called epi-drugs to treat diseases such as cancer and Alzheimer's. 

From our previous discussion, we have to consider an optimal control
problem with endpoint/state constraints. Unfortunately, this problem
is not well understood up to now. Hence, we will start with a closely
related optimal control problem for which we can show a Pontriagyn-type
maximum principle. Afterward, we will study the case with endpoint/state
constraints using set-valued analysis and additional restrictions.

For our study of optimal control problems for the stochastic epigenetic
model, it is convenient to adopt the semigroup theory approach of
Da Prato and Zabczyk for stochastic PDE \cite{da2014stochastic}.
We note that, for the Brownian sheets $\hat{w}_{k}\left(t,x\right)$,
$k=1,2$ in (\ref{eq:sys_epig}), the distributional derivative $\partial\hat{w}_{k}\left(t,x\right)/dx$
can be identified, up to a constant, with a cylindrical Wiener process
in $L^{2}\left(\mathbb{T}\right)$; see Section 4.1.5 in \cite{da2014stochastic}.

Let $H$ and $V$ be two separable Hilbert spaces, and denote by $\mathcal{L}_{2}^{0}=\mathcal{L}_{2}^{0}\left(V;H\right)$
the space of Hilbert-Schmidt operators from $V$ into $H$. Let $\left\{ W\left(t\right)\right\} _{t\in\left[0,T\right]}$
be a $V-$valued, $\mathbf{F}-$adapted cylindrical Wiener process
on the filtered probability space $\left(\Omega,\mathcal{F},\mathbf{F},P\right)$,
$\mathbf{F}=\left\{ \mathcal{F}_{t}\right\} _{t\in\left[0,T\right]}$
with standard conditions. Let $\mathbb{F}$ be the progressive $\sigma-$field
(in $\left[0,T\right]\times\Omega$) with respect to $\mathbf{F}$.

Following \cite{lu2021mathematical}, we study the controlled stochastic
PDE
\begin{align}
dx\left(t\right)= & \left(Ax\left(t\right)+a\left(t,x\left(t\right),\alpha\left(t\right)\right)\right)dt\nonumber \\
 & +b\left(t,x\left(t\right),\alpha\left(t\right)\right)dW\left(t\right)\quad\textrm{in }\left(0,T\right],\label{eq:cont_sys}\\
x\left(0\right)= & x_{0},\nonumber 
\end{align}
where $A$ generates a $C_{0}-$semigroup, $\left\{ S\left(t\right)\right\} _{t\geq0}$,
on $H$, $a\left(\cdot\right):\left[0,T\right]\times\Omega\times H\times U\rightarrow H$
and $b\left(\cdot\right):\left[0,T\right]\times\Omega\times H\times U\rightarrow\mathcal{L}_{2}^{0}$;
$U$ is a separable Hilbert space. Moreover, $x\left(\cdot\right)$
is the state variable (valued in $H$) and $\alpha\left(\cdot\right)$
is the control variable (valued in $U$). 

For $\xi\in\mathcal{O}\subset\mathbb{R}$, $d_{1},d_{2}>0$, and appropriate
spaces $H,V,U$ on $\mathcal{O}$ (e.g., $H^{1}\left(\mathcal{O};\mathbb{R}^{2}\right)$,
plus boundary conditions), we will set
\begin{gather}
x\left(t,\xi\right)=\left(\begin{array}{c}
u\left(t,\xi\right)\\
v\left(t,\xi\right)
\end{array}\right),\textrm{ }A=\left(\begin{array}{c}
d_{1}\frac{\partial^{2}}{\partial\xi^{2}}\\
d_{2}\frac{\partial^{2}}{\partial\xi^{2}}
\end{array}\right),\textrm{ }W\left(t\right)=\left(\begin{array}{c}
W_{1}\left(t\right)\\
W_{2}\left(t\right)
\end{array}\right),\textrm{ }\alpha\left(t,\xi\right)=\left(\begin{array}{c}
\alpha_{1}\left(t,\xi\right)\\
\alpha_{2}\left(t,\xi\right)
\end{array}\right),\nonumber \\
e\left(t,x\left(t\right),\alpha\left(t\right)\right)\left(\xi\right)=\left(\begin{array}{c}
e_{1}\left(t,u\left(t,\xi\right),v\left(t,\xi\right),\alpha_{1}\left(t,\xi\right),\alpha_{2}\left(t,\xi\right)\right)\\
e_{2}\left(t,u\left(t,\xi\right),v\left(t,\xi\right),\alpha_{1}\left(t,\xi\right),\alpha_{2}\left(t,\xi\right)\right)
\end{array}\right),\nonumber \\
a\left(t,x\left(t\right),\alpha\left(t\right)\right)\left(\xi\right)=\left(\begin{array}{c}
\tilde{f}\left(u\left(t,\xi\right),v\left(t,\xi\right)\right)\\
\tilde{g}\left(u\left(t,\xi\right),v\left(t,\xi\right)\right)
\end{array}\right)+e\left(t,x\left(t\right),\alpha\left(t\right)\right)\left(\xi\right),\nonumber \\
\left(b\left(t,x\left(t\right),\alpha\left(t\right)\right)w\right)\left(\xi\right)=\sigma\left(\begin{array}{c}
b_{1}\left(t,u\left(t,\xi\right),v\left(t,\xi\right),\alpha_{1}\left(t,\xi\right),\alpha_{2}\left(t,\xi\right)\right)w_{1}\\
b_{2}\left(t,u\left(t,\xi\right),v\left(t,\xi\right),\alpha_{1}\left(t,\xi\right),\alpha_{2}\left(t,\xi\right)\right)w_{2}
\end{array}\right),\nonumber \\
w\left(\cdot\right)=\left(\begin{array}{c}
w_{1}\left(\cdot\right)\\
w_{2}\left(\cdot\right)
\end{array}\right),\label{eq:cont_epig}\\
\textrm{ }\nonumber 
\end{gather} 
where $\tilde{f}=-\partial F_{\varepsilon}\left(u,v\right)/\partial u$, $\tilde{g}=-\partial F_{\varepsilon}\left(u,v\right)/\partial v$, and $W_{1} ,W_{2}$ are independent
cylindrical Wiener processes. Moreover, $\sigma>0$, and $e_{1},e_{2},b_{1,}b_{2}$
have suitable regularity and growth (see below).

We start with the set of controls given by
\[
\mathcal{U}\left[0,T\right]=\left\{ \alpha:\left[0,T\right]\times\Omega\rightarrow U:\alpha\left(\cdot\right)\textrm{ is }\mathbf{F}\textrm{-adapted}\right\} ,
\]
which indicates that our controls are at least nonanticipative; we
will see bellow that, under our assumptions, $\mathcal{U}\left[0,T\right]$
is the set of admissible controls. In \cite{lu2021mathematical},
$U$ can be a separable metric space; however, we only deal with the
Hilbert case. 

Define the cost functional $\mathcal{J}\left(\alpha\left(\cdot\right)\right)$
associated to (\ref{eq:cont_sys}) by 
\begin{equation}
\mathcal{J}\left(\alpha\left(\cdot\right)\right)=E\left[\int_{0}^{T}g\left(t,x\left(t\right),\alpha\left(t\right)\right)dt+h\left(x\left(T\right)\right)\right],\textrm{ }\forall\alpha\left(\cdot\right)\in\mathcal{U}\left[0,T\right].\label{eq:cost}
\end{equation}
Our first objective is to study the following optimal control problem
for the controlled equation (\ref{eq:cont_sys}) with the cost functional
(\ref{eq:cost}):

Find $\bar{\alpha}\left(\cdot\right)\in\mathcal{U}\left[0,T\right]$
such that
\begin{equation}
\mathcal{J}\left(\bar{\alpha}\left(\cdot\right)\right)=\underset{\alpha\left(\cdot\right)\in\mathcal{U}\left[0,T\right]}{\inf}\mathcal{J}\left(\alpha\left(\cdot\right)\right).\label{eq:op_cont_prob}
\end{equation}
We call any $\bar{\alpha}\left(\cdot\right)$ satisfying the last
expression an optimal control; the corresponding state $\bar{x}\left(\cdot\right)$
is an optimal state and $\left(\bar{x}\left(\cdot\right),\bar{\alpha}\left(\cdot\right)\right)$
is an optimal pair.

As mentioned in Section 12.1 \cite{lu2021mathematical}, to establish
Pontryagin-type necessary conditions for an optimal pair $\left(\bar{x}\left(\cdot\right),\bar{\alpha}\left(\cdot\right)\right)$
of Problem (\ref{eq:op_cont_prob}), we have to consider (along with
appropriate assumptions) the following $H-$valued backward stochastic
differential equation:
\begin{align}
dy\left(t\right)= & -A^{*}y\left(t\right)dt-\left(a_{x}\left(t,\bar{x}\left(t\right),\bar{\alpha}\left(t\right)\right)^{*}y\left(t\right)+b_{x}\left(t,\bar{x}\left(t\right),\bar{\alpha}\left(t\right)\right)^{*}Y\left(t\right)\right.\nonumber \\
 & \left.-g_{x}\left(t,\bar{x}\left(t\right),\bar{\alpha}\left(t\right)\right)\right)dt+Y\left(t\right)dW\left(t\right)\textrm{ in }\left[0,T\right),\label{eq:backward_control}\\
y\left(T\right)= & -h_{x}\left(\bar{x}\left(T\right)\right).\nonumber 
\end{align}

Next, we deal with the (more technical) optimal control problem with
endpoint/state constraints. Consider the controlled stochastic differential
equation (\ref{eq:cont_sys}) with $\alpha\in\mathcal{U}_{2}$,
\[
\mathcal{U}_{2}=\left\{ \alpha\left(\cdot\right):\left[0,T\right]\rightarrow U:\alpha\left(\cdot\right)\in L_{\mathbb{F}}^{2}\left(0,T;H_{1}\right)\right\} ,
\]
where $U$ is a nonempty closed subset of the separable Hilbert space
$H_{1}$. Let $\mathcal{K}_{a}$ be a nonempty closed subset of $H$,
and $h:\Omega\times H\rightarrow\mathbb{R}$, $g^{j}:H\rightarrow\mathbb{R}$
($j=0,\ldots,n$). We associate to the control system (\ref{eq:cont_sys})
a Mayer cost functional, $\mathcal{J}_{M}\left(\cdot\right)$, given
by
\begin{equation}
\mathcal{J}_{M}\left(\alpha\left(\cdot\right),x_{0}\right)=E\left[h\left(x\left(T\right)\right)\right],\label{eq:Mayer_cost}
\end{equation}
along with the state constraint 
\begin{equation}
E\left[g^{0}\left(x\left(t\right)\right)\right]\leq0,\textrm{ for all }t\in\left[0,T\right],\label{eq:state_const}
\end{equation}
and the initial-final states constraints 
\begin{equation}
x_{0}\in\mathcal{K}_{a},\quad E\left[g^{j}\left(x\left(T\right)\right)\right]\leq0,\quad j=1,\ldots,n.\label{eq:ini_fin_const}
\end{equation}
In this case, the set of admissible controls (with initial datum $x_{0}$)
is given by
\[
\mathcal{U}_{ad}^{x_{0}}=\left\{ \alpha\in\mathcal{U}_{2}:\textrm{ the corresponding solution }x\left(t\right)\textrm{ of (\ref{eq:cont_sys}) satisfies (\ref{eq:state_const}) and (\ref{eq:ini_fin_const})}\right\} .
\]

We study the following optimal control problem: find $\left(\bar{x}_{0},\bar{\alpha}\left(\cdot\right)\right)\in\mathcal{K}_{a}\times\mathcal{U}_{ad}^{x_{0}}$
such that
\begin{equation}
\mathcal{J}_{M}\left(\bar{x}_{0},\bar{\alpha}\left(\cdot\right)\right)=\underset{\left(x_{0},\alpha\left(\cdot\right)\right)\in\mathcal{K}_{a}\times\mathcal{U}_{ad}^{x_{0}}}{\inf}\mathcal{J}_{M}\left(x_{0},\alpha\left(\cdot\right)\right).\label{eq:op_prob2}
\end{equation}
We note that it is possible to study the more general Bolza problem
using the previous formulation (see \cite{frankowska2020first} Section
1). 

Let 
\[
\varphi_{1}\left[t\right]=\varphi_{x}\left(t,\bar{x}\left(t\right),\bar{\alpha}\left(t\right)\right),\quad\varphi_{2}\left[t\right]=\varphi_{u}\left(t,\bar{x}\left(t\right),\bar{\alpha}\left(t\right)\right),
\]
where $\varphi$ can be either $a,b,f,g,$ or $h$(with appropriate
regularity). To analyze Problem (\ref{eq:op_prob2}), we will need
the auxiliary linearized stochastic control system
\begin{align}
dx_{1}\left(t\right)= & \left(Ax_{1}\left(t\right)+a_{1}\left[t\right]x_{1}\left(t\right)+a_{2}\left[t\right]\alpha_{1}\left(t\right)\right)dt\nonumber \\
 & +\left(b_{1}\left[t\right]x_{1}\left(t\right)+b_{2}\left[t\right]\alpha_{1}\left(t\right)\right)dW\left(t\right)\textrm{ in }\left(0,T\right],\nonumber \\
x_{1}\left(0\right)= & x_{1},\label{eq:Lin_sys}
\end{align}
along with its first-order adjoint equation 
\begin{align}
dy\left(t\right)= & -\left(A^{*}y\left(t\right)+a_{1}\left[t\right]^{*}y\left(t\right)+b_{1}\left[t\right]^{*}Y\left(t\right)\right)dt\nonumber \\
 & +d\psi\left(t\right)+Y\left(t\right)dW\left(t\right)\textrm{ in }\left(0,T\right],\nonumber \\
y\left(T\right)= & y_{T},\label{eq:Lin_adj}
\end{align}
where $y_{T}\in L_{\mathcal{F}_{T}}^{2}\left(\Omega;H\right)$ and
$\psi\in L_{\mathbb{F}}^{2}\left(\Omega;BV_{0}\left(\left[0,T\right];H\right)\right)$.

From the biological perspective, it is natural to ask if there is
a canonical cost function associated with the epigenetic landscape.
There are at least two options which seem realistic: to consider an
energy function or a minimal time problem. The former requires to
define an energy function corresponding to the given system. In general,
this function will depend on the detailed structure of the underlying
genetic regulatory network and the specific system. An alternative,
which relies on the assumption that the energetic cost spent in traversing
from a state (genetic expression profile), $A$, to another state,
$B$, is a convex function of the distance in the state space, e.g.,
the distance squared. As for the second option, it is compatible with
the hypothesis that the energy spent is a monotone function of the
time required to reach state $B$ starting at $A$. Both problems
are interesting from the mathematical perspective; they arise naturally,
especially the optimal time problem, in the context of epigenetic
therapy, where designing a control policy for the disease is crucial.

\section{Main Results\label{sec:Main_results}}
Our results consist of three parts:
\begin{enumerate}[label=\arabic*.]
\item Stochastic dynamics,
\item Invariant measures,
\item Stochastic control.
\end{enumerate}

\begin{theorem}
\label{thm:Thm_large_dev}Let $\sigma_{1}=\sigma_{2}=\sigma>0$ and
$u\left(0,x\right),v\left(0,x\right)\in C\left(\mathbb{T};\mathbb{R}\right)=C\left(\mathbb{T}\right)$.
The stochastic epigenetic system (\ref{eq:sys_epig}) has a unique
generalized solution, $u^{\sigma}\left(x,t\right)$, and the random
process $u^{\sigma}\left(t\right)=u^{\sigma}\left(t,\cdot\right)$
in the state space $C\left(\mathbb{T}\right)$ is a Markov-Feller
process. Furthermore,
\begin{enumerate}
\item The process $u^{\sigma}\left(t\right)$ has a unique normed stationary
measure, $\nu^{\sigma}$, in $C\left(\mathbb{T};\mathbb{R}^{2}\right)$
such that, for any Borel set $\Gamma\in C\left(\mathbb{T};\mathbb{R}^{2}\right)$
and any $u_{0}\in C\left(\mathbb{T};\mathbb{R}^{2}\right)$,
\[
P_{u_{0}}\left\{ \underset{T\rightarrow\infty}{\lim}\frac{1}{T}\int_{0}^{T}\chi_{\Gamma}\left(u^{\sigma}\left(t\right)\right)dt=\nu^{\sigma}\left(\Gamma\right)\right\} =1,
\]
where $\chi_{\Gamma}\left(u\right)$ is the indicator of the set $\Gamma\subset C\left(\mathbb{T};\mathbb{R}^{2}\right)$.
\item Let $\hat{\varphi}^{\left(1\right)},\ldots,\hat{\varphi}^{\left(4\right)}\in\mathbb{R}^{2}$
be the points where the epigenetic potential $F$ achieves its absolute
minimum, i.e., the points $\left(u_{1},v_{1}\right),$ $\left(u_{2},v_{2}\right),$
$\left(u_{3},v_{3}\right),$ and $\left(u_{4},v_{4}\right)$. Let
$\triangle_{k}=\det\left(\mathbf{H}\left(F\right)\left(\hat{\varphi}^{k}\right)\right)=4a_{k}^{2}>0$
, $k=1,\ldots4$, where $\mathbf{H}$ is the Hessian matrix. Then,
the measure $\nu^{\sigma}$ weakly converges as $\sigma\rightarrow0$
to the measure $\nu^{0}$ concentrated at the m points $\hat{\varphi}^{\left(1\right)},\ldots,\hat{\varphi}^{\left(4\right)}\in\mathbb{R}^{2}$
and 
\[
\nu^{0}\left(\hat{\varphi}^{\left(k\right)}\right)=\triangle_{k}^{-1}\left(\sum_{j=1}^{m}\triangle_{j}^{-1}\right)^{-1}.
\]
Since for Arabidopsis thaliana $a_{k}=1/c_{k}$, $k=1,\ldots,4$ (see
Section 3 \cite{cortes2022variational}), our previous result suggests
$a_{k}=1/\left(2\sqrt{c_{k}}\right)$.
\item $\varphi_{0}\left(x\right)\equiv\hat{\varphi}^{\left(k\right)}$,
$k\in\left\{ 1,\ldots,4\right\} $ is an asymptotically stable equilibrium
point of the deterministic epigenetic system, i.e., (\ref{eq:sys_epig})
with $\sigma_{1}=\sigma_{2}=0$. In addition, the phenotype (or cellular
type) regions associated to each $\hat{\varphi}^{\left(k\right)}$
are invariant. Assume that $D$ is a regular region (see Subsection
\ref{subsec:dev} below) in $C\left(\mathbb{T};\mathbb{R}^{2}\right)$
which contains $\varphi_{0}$ such that all the trajectories (of the
deterministic system) starting from $g\in D\cup\partial D$ tend to
$\varphi_{0}$ without leaving $D$. Let 
\begin{multline*}
U\left(\varphi\right)=\int_{\mathbb{T}}\left[\frac{1}{2}\sum_{k=1}^{2}d_{k}\left(\frac{d\varphi_{k}}{dx}\right)^{2}+F\left(\varphi\left(x\right)\right)\right]dx,\\
\varphi\left(x\right)=\left(\varphi_{1}\left(x\right),\varphi_{2}\left(x\right)\right)\in C\left(\mathbb{T};\mathbb{R}^{2}\right).
\end{multline*}
Let $\tau^{\sigma}=\tau_{D}^{\sigma}=\inf\left\{ t:\textrm{ }u^{\sigma}\left(t\right)\notin D\right\} $
be the first exist time of $u^{\sigma}\left(t\right)$ from $D$.
Then
\[
\underset{\sigma\rightarrow0^{+}}{\lim}\textrm{ }\sigma^{2}\ln E_{g}\tau^{\sigma}=2\underset{\varphi\in\partial D}{\min}\left(U\left(\varphi\right)-U\left(\varphi_{0}\right)\right),\textrm{ }g\in D,
\]
with the transition at the minimizer $\varphi\in\partial D$ of the previous expression; cf. the mountain pass points for the deterministic epigenetic model in Section 4.2 \cite{cortes2022variational}.
\end{enumerate}
\end{theorem}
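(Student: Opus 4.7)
The plan is to rewrite the system in mild form as
\begin{equation*}
u^{\sigma}(t) = S(t)\, u_{0} + \int_{0}^{t} S(t-s)\, a(u^{\sigma}(s))\, ds + \sigma \int_{0}^{t} S(t-s)\, dW(s),
\end{equation*}
where $S(t)$ is the $C_{0}$-semigroup generated by the diagonal operator $A = \mathrm{diag}(d_{1}\partial_{x}^{2}, d_{2}\partial_{x}^{2})$ under the prescribed boundary conditions on $\mathcal{O}$, and $a = (\tilde f, \tilde g) = -\nabla F$. Because the mollified $F$ has globally Lipschitz gradient, $a$ is globally Lipschitz from $C(\mathbb{T};\mathbb{R}^{2})$ into itself. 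In the one-dimensional spatial setting the stochastic convolution has a $C(\mathbb{T};\mathbb{R}^{2})$-continuous version (\cite{da2014stochastic}, Ch.~5), so a Banach fixed-point argument gives the unique mild (equivalently, generalized) solution, and continuous dependence on $u_{0}$ yields Feller continuity of the transition semigroup $P_{t}^{\sigma}$.

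\textbf{Invariant measure, ergodicity, and concentration.} I would use the energy $U(\varphi)$ as a Lyapunov function: It\^o's formula for $U(u^{\sigma}(t))$ combined with the gradient structure produces the bound $\sup_{t\geq 0} E\, U(u^{\sigma}(t)) < \infty$, so that Krylov--Bogoliubov yields tightness and existence of an invariant measure $\nu^{\sigma}$. Uniqueness follows from (a) strong Feller, a consequence of the heat semigroup's smoothing together with the nondegenerate cylindrical noise (Bismut--Elworthy--Li formula; \cite{da2014stochastic}, Ch.~7), and (b) irreducibility, since additive nondegenerate noise and a Lipschitz drift allow the solution to enter any open set of $C(\mathbb{T};\mathbb{R}^{2})$ with positive probability. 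Doob's theorem then gives the ergodic identity of part (i). For part (ii), the gradient structure yields a formal Gibbs density $\nu^{\sigma} \propto \exp(-2 U / \sigma^{2})$ against the reference Gaussian of $A$. The minimizers of $U$ in $C(\mathbb{T};\mathbb{R}^{2})$ are precisely the constant profiles $\varphi \equiv \hat{\varphi}^{(k)}$ (the gradient term forces $\varphi' = 0$, the $F$-term forces $\varphi \in \{\hat\varphi^{(k)}\}$), and all four minima share the value $U(\hat\varphi^{(k)}) = 0$. A Laplace-type expansion around each minimum, where the transverse Gaussian factors from the infinitely many modes match across minima, leaves the only $k$-dependent weight as the finite-dimensional Hessian contribution of $F$, i.e.\ $\triangle_{k}^{-1}$ up to normalization. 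This gives the concentration and the stated formula.

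\textbf{Deterministic stability, invariance, and Kramers law.} The constant profile $\hat\varphi^{(k)}$ is a nondegenerate local minimum of $U$ since $\mathbf{H}(F)(\hat\varphi^{(k)}) = 2a_{k} I$ is positive definite and the diffusion contributes a nonnegative operator; hence it is asymptotically stable for the deterministic gradient flow. Invariance of a phenotype region $M$ follows from the Smoller invariant-region criterion (\cite{smoller2012shock}, Thm.~14.7): $M$ is closed and convex, $-\nabla F$ points strictly inward on $\partial M$, and the diagonal diffusion preserves the tangent half-spaces at each boundary point. For the exit-time formula I invoke the Freidlin--Wentzell large-deviation principle (\cite{freidlin1998random}, Ch.~5, extended to the SPDE setting): the rate function at speed $\sigma^{2}$ is
\begin{equation*}
I_{T}(\varphi) = \tfrac{1}{2} \int_{0}^{T} \big\| \dot\varphi(t) - A\varphi(t) + \nabla F(\varphi(t)) \big\|_{H}^{2}\, dt,
\end{equation*}
and because the drift is the gradient of $U$, the quasi-potential from $\varphi_{0}$ reduces to $V(\varphi_{0}, \varphi) = 2 \left( U(\varphi) - U(\varphi_{0}) \right)$. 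The standard Wentzell--Freidlin exit-time theorem applied to the regular region $D$ then yields $\lim_{\sigma \to 0^{+}} \sigma^{2} \ln E_{g}\tau_{D}^{\sigma} = \inf_{\varphi \in \partial D} V(\varphi_{0}, \varphi) = 2 \min_{\varphi \in \partial D} ( U(\varphi) - U(\varphi_{0}) )$, with the optimal path exiting through a minimizer on $\partial D$, i.e.\ the mountain-pass configuration of $U$ (compare Section~4.2 of \cite{cortes2022variational}).

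\textbf{Principal obstacle.} The most delicate step is the infinite-dimensional Laplace asymptotic producing the precise $\triangle_{k}^{-1}$ weights in (ii): the Gibbs density is only formal on $C(\mathbb{T})$, so the cancellation of the infinitely many transverse Gaussian factors across the four wells must be handled rigorously. I would bridge this either by a Watanabe-type asymptotic expansion for Wiener functionals, or by combining the LDP of part (iii) with Khasminskii's cycle/transition-rate argument: the weights of $\nu^{0}$ on $\{\hat\varphi^{(1)}, \ldots, \hat\varphi^{(4)}\}$ are then obtained from the ratio of mean exit times, and local Gaussian computations near each $\hat\varphi^{(k)}$ identify the $k$-dependent prefactor with $\triangle_{k}^{-1}$.
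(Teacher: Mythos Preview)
Your outline is essentially correct but takes a substantially different route from the paper's own proof, which is much shorter because it leverages the specific results of Freidlin~\cite{freidlin1988random} rather than rebuilding the theory from general SPDE tools.

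For parts (i) and (ii) you use Krylov--Bogoliubov plus strong Feller/irreducibility/Doob, and then attempt a bare-hands Laplace expansion for the concentration weights. The paper instead invokes directly Theorems~1,~2 and~5 of \cite{freidlin1988random}: Theorem~2 gives an \emph{explicit} Gibbs-type density $d\nu^{\sigma}/d\mu^{\sigma}_{\alpha}$ for the stationary measure (equation~(\ref{eq:st_measure}) in the paper), and Theorem~5 then carries out the finite-dimensional Laplace computation yielding the $\triangle_{k}^{-1}$ weights. In other words, the ``principal obstacle'' you flag---making the infinite-dimensional Laplace asymptotics rigorous and cancelling the transverse Gaussian factors---is exactly what Freidlin's explicit density formula bypasses, since the reference Gaussian measure already absorbs those factors and only the finite-dimensional integral over $\mathbb{R}^{2}$ remains. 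Your Lyapunov/Doob approach, while valid, does not produce this explicit density, which is why you are forced to work harder for (ii).

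For part (iii) the two arguments agree in spirit (gradient structure $\Rightarrow$ quasi-potential $=2(U-U(\varphi_{0}))$), but the paper's execution consists of checking the hypotheses of Proposition~\ref{prop:Exit_time} (Theorem~9 of \cite{freidlin1988random}): one must verify that $U$ is a \emph{regular} functional on $C(\mathbb{T};\mathbb{R}^{2})$, i.e., lower semicontinuous with compact sublevel sets (via Arzel\`a--Ascoli). You skip this verification and go straight to the LDP rate function; this is fine but hides the step where the SPDE setting is actually controlled. For invariance of the phenotype regions, the paper cites Corollary~14.8(b) of \cite{smoller2012shock} rather than Theorem~14.7; both work here since the regions are rectangles-like convex sets with inward-pointing vector field. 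Your approach is more self-contained and perhaps more instructive; the paper's is shorter because the heavy lifting has already been packaged in \cite{freidlin1988random}.
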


Theorem \ref{thm:Thm_large_dev} follows from verifying that the epigenetic landscape introduced in Section \ref{sec:Stoch_model} satisfies the assumptions of the abstract framework developed in \cite{freidlin1988random}. The novelty therefore lies in the construction of the stochastic epigenetic model and the biological interpretation of the resulting invariant measures, transition paths, and exit-time asymptotics.

Now, we have the following set of assumptions:
\begin{assumption}
\label{assu:S1_p}Let $e\left(\cdot,\cdot,\cdot\right):\left[0,T\right]\times\Omega\times H\times U\rightarrow H$
and $b\left(\cdot,\cdot,\cdot\right):\left[0,T\right]\times\Omega\times H\times U\rightarrow\mathcal{L}_{2}^{0}$
satisfy:
\begin{enumerate}
\item For any $\left(x,\alpha\right)\in H\times U$, the functions $e\left(\cdot,x,\alpha\right):\left[0,T\right]\times\Omega\rightarrow H$
and $b\left(\cdot,x,\alpha\right)\rightarrow\mathcal{L}_{2}^{0}$
are $\mathbb{F}-$measurable,
\item For any $x\in H$ and a.e. $\left(t,\omega\right)\in\left(0,T\right)\times\Omega$,
the functions $e\left(t,x,\cdot\right):U\rightarrow H$ and $b\left(t,x,\cdot\right):U\rightarrow\mathcal{L}_{2}^{0}$
are continuous,
\item For any $\left(x_{1},x_{2},\alpha\right)\in H\times H\times U$ and
a.e. $\left(t,\omega\right)\in\left(0,T\right)\times\Omega$,
\[
\left\{ \begin{array}{l}
\left|e\left(t,x_{1},\alpha\right)-e\left(t,x_{2},\alpha\right)\right|_{H}+\left|b\left(t,x_{1},\alpha\right)-b\left(t,x_{2},\alpha\right)\right|_{\mathcal{L}_{2}^{0}}\leq C\left|x_{1}-x_{2}\right|_{H},\\
\left|e\left(t,0,\alpha\right)\right|_{H}+\left|b\left(t,0,\alpha\right)\right|_{\mathcal{L}_{2}^{0}}\leq C.
\end{array}\right.
\]
\end{enumerate}
\end{assumption}

\begin{assumption}
\label{assu:S2}Let $g\left(\cdot,\cdot,\cdot\right):\left[0,T\right]\times\Omega\times H\times U\rightarrow\mathbb{R}$
and $h\left(\cdot\right):\Omega\times H\rightarrow\mathbb{R}$ be
two functions satisfying:
\begin{enumerate}
\item For any $\left(x,\alpha\right)\in H\times U$, $g\left(\cdot,x,\alpha\right):\left[0,T\right]\times\Omega\rightarrow\mathbb{R}$
is $\mathbb{F}-$measurable and $h\left(x\right):\Omega\rightarrow\mathbb{R}$
is $\mathcal{F}_{T}-$measurable,
\item For any $x\in H$ and a.e. $\left(t,\omega\right)\in\left(0,T\right)\times\Omega$,
$g\left(t,x,\cdot\right):U\rightarrow\mathbb{R}$ is continuous,
\item For any $\left(x_{1},x_{2},\alpha\right)\in H\times H\times U$ and
a.e. $\left(t,\omega\right)\in\left(0,T\right)\times\Omega$,
\[
\left\{ \begin{array}{l}
\left|g\left(t,x_{1},\alpha\right)-g\left(t,x_{2},\alpha\right)\right|+\left|h\left(x_{1}\right)-h\left(x_{2}\right)\right|\leq C\left|x_{1}-x_{2}\right|_{H},\\
\left|g\left(t,0,\alpha\right)\right|+\left|h\left(0\right)\right|\leq C.
\end{array}\right.
\]
\end{enumerate}
\end{assumption}

\begin{assumption}
\label{assu:S4}The control region $U$ is a convex subset of a separable
Hilbert space, $\tilde{H}$, and the metric of $U$ is induced by
the norm of $\tilde{H}$, that is, $d\left(\alpha_{1},\alpha_{2}\right)=\left|\alpha_{1}-\alpha_{2}\right|_{\tilde{H}}$.
\end{assumption}

\begin{assumption}
\label{assu:S5_p}For a.e. $\left(t,\omega\right)\in\left(0,T\right)\times\Omega$,
the functions $e\left(t,\cdot,\cdot\right):H\times U\rightarrow H$,
$b\left(t,\cdot,\cdot\right):H\times U\rightarrow\mathcal{L}_{2}^{0}$,
$g\left(t,\cdot,\cdot\right):H\times U\rightarrow\mathbb{R}$, and
$h\left(\cdot\right):H\rightarrow\mathbb{R}$ are $C^{1}$. Furthermore,
for any $\left(x,\alpha\right)\in H\times U$ and a.e. $\left(t,\omega\right)\in\left(0,T\right)\times\Omega$,
we have
\[
\left\{ \begin{array}{l}
\left|e_{x}\left(t,x,\alpha\right)\right|_{\mathcal{L}\left(H\right)}+\left|b_{x}\left(t,x,\alpha\right)\right|_{\mathcal{L}\left(H;\mathcal{L}_{2}^{0}\right)}+\left|g_{x}\left(t,x,\alpha\right)\right|_{H}+\left|h_{x}\left(x\right)\right|_{H}\leq C,\\
\left|e_{\alpha}\left(t,x,\alpha\right)\right|_{\mathcal{L}\left(\tilde{H};H\right)}+\left|b_{\alpha}\left(t,x,\alpha\right)\right|_{\mathcal{L}\left(\tilde{H};\mathcal{L}_{2}^{0}\right)}+\left|g_{\alpha}\left(t,x,\alpha\right)\right|_{\mathcal{L}\left(\tilde{H}\right)}\leq C
\end{array}\right.
\]
\end{assumption}

\begin{theorem}
\label{thm:Thm_control1}Consider the controlled stochastic epigenetic
system (\ref{eq:cont_sys})-(\ref{eq:cont_epig}). Let Assumptions
\ref{assu:S1_p}-\ref{assu:S5_p} hold. Then,
\begin{enumerate}
\item For any $x_{0}\in L_{\mathcal{F}_{0}}^{p_{0}}\left(\Omega;H\right)$,
$p_{0}\geq2$, and $\alpha\left(\cdot\right)\in\mathcal{U}\left[0,T\right]$,
system (\ref{eq:cont_sys}) has a unique mild solution, $x\left(\cdot\right)\equiv x\left(\cdot;x_{0},\alpha\right)\in C_{\mathbb{F}}\left(\left[0,T\right];L^{p_{0}}\left(\Omega;H\right)\right)$,
such that
\[
\left|x\left(\cdot\right)\right|_{C_{\mathbb{F}}\left(\left[0,T\right];L^{p_{0}}\left(\Omega;H\right)\right)}\leq C\left(1+\left|x_{0}\right|_{L_{\mathcal{F}_{0}}^{p_{0}}\left(\Omega;H\right)}\right).
\]
Moreover, equation (\ref{eq:backward_control}) is well-posed in the
sense of transposition solution (see Definition 4.13 \cite{lu2021mathematical}). 
\item Let $\left(\bar{x}\left(\cdot\right),\bar{\alpha}\left(\cdot\right)\right)$
be an optimal pair for Problem (\ref{eq:op_cont_prob}) with $x_{0}\in L_{\mathcal{F}_{0}}^{2}\left(\Omega;H\right)$.
Then,
\begin{multline*}
\textrm{Re}\left\langle a_{u}\left(t,\bar{x}\left(t\right),\bar{\alpha}\left(t\right)\right)^{*}y\left(t\right)+b_{u}\left(t,\bar{x}\left(t\right),\bar{\alpha}\left(t\right)\right)^{*}Y\left(t\right)-g_{u}\left(t,\bar{x}\left(t\right),\bar{\alpha}\left(t\right)\right),\right.\\
\left.\alpha-\bar{\alpha}\left(t\right)\right\rangle _{\tilde{H}}\leq0,
\end{multline*}
a.e. $\left(t,\omega\right)\in\left[0,T\right]\times\Omega$, $\forall\alpha\in U$,
where $\left(y\left(\cdot\right),Y\left(\cdot\right)\right)$ is the
transposition solution of (\ref{eq:backward_control}).
\end{enumerate}
\end{theorem}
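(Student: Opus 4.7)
The plan is to derive Theorem \ref{thm:Thm_control1} by specializing the general results of \cite{lu2021mathematical} to the controlled epigenetic system (\ref{eq:cont_epig}). The bulk of the work is then to verify that Assumptions \ref{assu:S1_p}--\ref{assu:S5_p} are satisfied by our concrete choices of $A$, $a$, $b$, $g$, and $h$. Once this is done, Part (i) follows from the well-posedness theory for semilinear stochastic evolution equations and for backward stochastic evolution equations in the sense of transposition (Chapter 4 of \cite{lu2021mathematical}), while Part (ii) is obtained by a convex perturbation argument combined with the duality identity between the linearized state equation (\ref{eq:Lin_sys}) and the backward adjoint (\ref{eq:backward_control}).

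For Part (i), I would first note that the operator $A=\mathrm{diag}(d_{1}\partial_{\xi}^{2},d_{2}\partial_{\xi}^{2})$ with the prescribed boundary conditions on $\mathcal{O}$ is self-adjoint and negative semidefinite, hence generates an analytic $C_{0}$-semigroup $\{S(t)\}_{t\geq 0}$ on $H$. The reaction term $(\tilde f,\tilde g)=-\nabla F$ is globally Lipschitz as a Nemitski operator on $H$: on each Voronoi-type cell determined by the basin centers $\nabla F$ is affine with slope $2a_{k}$, so convolution with a smooth Gaussian kernel yields a $C^{\infty}$ vector field with a globally bounded Hessian. Combining this with Assumption \ref{assu:S1_p} applied to $e$ and $b$, the coefficients of (\ref{eq:cont_sys}) satisfy the standard global Lipschitz and linear growth conditions. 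Existence and uniqueness of the mild solution in $C_{\mathbb{F}}([0,T];L^{p_{0}}(\Omega;H))$ together with the stated linear growth estimate then follow from a Banach fixed-point argument in this space, using the Burkholder--Davis--Gundy inequality to control the stochastic convolution. Well-posedness of (\ref{eq:backward_control}) in the sense of transposition is a direct application of Theorem 4.10 of \cite{lu2021mathematical}, since $-h_{x}(\bar x(T))\in L_{\mathcal{F}_{T}}^{2}(\Omega;H)$ and $g_{x}(\cdot,\bar x,\bar\alpha)\in L_{\mathbb{F}}^{2}(0,T;H)$ by Assumption \ref{assu:S5_p}.

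For Part (ii), the convexity of $U$ (Assumption \ref{assu:S4}) lets me work with convex perturbations rather than spike variations. Given $\alpha\in\mathcal{U}[0,T]$ and $\epsilon\in(0,1]$, set $\alpha^{\epsilon}=\bar\alpha+\epsilon(\alpha-\bar\alpha)$ and let $x^{\epsilon}$ be the corresponding state. A standard linearization yields the expansion $x^{\epsilon}=\bar x+\epsilon x_{1}+r^{\epsilon}$, where $x_{1}$ solves (\ref{eq:Lin_sys}) with $\alpha_{1}=\alpha-\bar\alpha$ and $r^{\epsilon}$ is $o(\epsilon)$ in $C_{\mathbb{F}}([0,T];L^{2}(\Omega;H))$. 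Expanding $\mathcal{J}$ to first order gives
\begin{align*}
\mathcal{J}(\alpha^{\epsilon})-\mathcal{J}(\bar\alpha) &= \epsilon\, E\Bigl[\int_{0}^{T}\bigl(\langle g_{x}[t],x_{1}(t)\rangle+\langle g_{\alpha}[t],\alpha(t)-\bar\alpha(t)\rangle\bigr)\,dt \\
&\qquad + \langle h_{x}(\bar x(T)),x_{1}(T)\rangle\Bigr]+o(\epsilon),
\end{align*}
and optimality forces the bracketed expression to be nonnegative. Applying the duality identity between $x_{1}$ and the transposition solution $(y,Y)$ of (\ref{eq:backward_control}) cancels the state-dependent contributions, leaving precisely the asserted variational inequality in integrated form; a standard measurable-selection argument then converts it to the pointwise inequality for a.e.\ $(t,\omega)$ and every $\alpha\in U$.

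The principal technical obstacle I anticipate is the careful verification of the global Lipschitz bound for the Nemitski operator associated with $(\tilde f,\tilde g)$ on the specific Hilbert space $H$ chosen for the problem: although the mollified potential has a bounded Hessian pointwise, one still has to check Lipschitz continuity in the $H$-norm, which requires an $L^{2}$-type estimate supplemented, for $H$ of Sobolev type, by control of first derivatives via the chain rule and the one-dimensional Sobolev embedding. A related subtlety is that the duality identity eliminating the linearized state must be applied to the transposition solution $(y,Y)$ rather than to a classical solution, so one invokes the regularization procedure of \cite{lu2021mathematical} in place of a naive Itô calculation; with this identity in hand, the rest of the argument is essentially routine.
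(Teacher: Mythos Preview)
Your proposal is correct and follows essentially the same approach as the paper: verify that the Nemytskii operators associated with the mollified reaction terms $(\tilde f,\tilde g)=-\nabla F$ have the required Lipschitz and differentiability properties on the chosen Sobolev space, so that Assumptions \ref{assu:S1_p}--\ref{assu:S5_p} upgrade to the full hypotheses needed in \cite{lu2021mathematical}, and then invoke the general well-posedness and Pontryagin-type results there. The paper's proof is terser than yours---it simply cites the Nemytskii-operator literature for the Fr\'echet differentiability on $H^{k}$, $k>1/2$ (flagging that on $L^{2}$ one only gets G\^ateaux differentiability) and then defers (i) and (ii) directly to the results recorded in Subsection \ref{subsec:Control}---whereas you additionally sketch the convex-perturbation and duality argument underlying the maximum principle; but the strategy and the identified technical crux (regularity of the Nemytskii operator on the specific Hilbert space $H$) coincide.
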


Next, we need some notation and ideas from set-valued analysis; see
Subsection \ref{subsec:Control} for details. In addition, we have
the following assumptions:
\begin{assumption}
\label{assu:As1_p}$e\left(\cdot,\cdot,\cdot,\cdot\right):\left[0,T\right]\times H\times H_{1}\times\Omega\rightarrow H$
and $b\left(\cdot,\cdot,\cdot,\cdot\right):\left[0,T\right]\times H\times H_{1}\times\Omega\rightarrow\mathcal{L}_{2}^{0}$
are two maps such that
\begin{enumerate}
\item For any $\left(x,\alpha\right)\in H\times H_{1}$, $e\left(\cdot,x,\alpha,\cdot\right):\left[0,T\right]\times\Omega\rightarrow H$
and $b\left(\cdot,x,\alpha,\cdot\right):\left[0,T\right]\times\Omega\rightarrow\mathcal{L}_{2}^{0}$
are $\mathcal{B}\left(\left[0,T\right]\right)\times\mathcal{F}$ measurable
and $\mathbb{F}-$adapted,
\item For any $\left(t,x,\omega\right)\in\left[0,T\right]\times H\times\Omega$,
$e\left(t,x,\cdot,\omega\right):H_{1}\rightarrow H$ and $b\left(t,x,\cdot,\omega\right):H_{1}\rightarrow\mathcal{L}_{2}^{0}$
are continuous and
\[
\begin{cases}
\left|e\left(t,x_{1},\alpha,\omega\right)-e\left(t,x_{2},\alpha,\omega\right)\right|_{H}+\left|b\left(t,x_{1},\alpha,\omega\right)-b\left(t,x_{2},\alpha,\omega\right)\right|_{\mathcal{L}_{2}^{0}}\leq\\
\qquad C\left|x_{1}-x_{2}\right|_{H}\quad\forall\left(t,x_{1},x_{2},\alpha,\omega\right)\in\left[0,T\right]\times H\times H\times H_{1}\times\Omega\\
\left|e\left(t,0,\alpha,\omega\right)\right|_{H}+\left|b\left(t,0,\alpha,\omega\right)\right|_{\mathcal{L}_{2}^{0}}\leq C,\quad\forall\left(t,\alpha,\omega\right)\in\left[0,T\right]\times H_{1}\times\Omega.
\end{cases}
\]
\end{enumerate}
\end{assumption}

\begin{assumption}
\label{assu:As2_p}For a.e. $\left(t,\omega\right)\in\left[0,T\right]\times\Omega$,
the functions $e\left(t,\cdot,\cdot,\omega\right):H\times H_{1}\rightarrow H$
and $b\left(t,\cdot,\cdot,\omega\right):H\times H_{1}\rightarrow\mathcal{L}_{2}^{0}$
are differentiable, and $\left(e_{x}\left(t,x,\alpha,\omega\right),e_{\alpha}\left(t,x,\alpha,\omega\right)\right)$
and $\left(b_{x}\left(t,x,\alpha,\omega\right),b_{\alpha}\left(t,x,\alpha,\omega\right)\right)$
are uniformly continuous with respect to $x\in H$ and $\alpha\in U$
(Fr{\'e}chet differentiability). There exists a nonnegative $\eta\in L_{\mathbb{F}}^{2}\left(0,T;\mathbb{R}\right)$
such that for a.e. $\left(t,\omega\right)\in\left[0,T\right]\times\Omega$
and for all $x\in H$ and $\alpha\in H_{1}$,
\[
\begin{cases}
\left|e\left(t,0,\alpha,\omega\right)\right|_{H}+\left|b\left(t,0,\alpha,\omega\right)\right|_{\mathcal{L}_{2}^{0}}\leq C\left(\eta\left(t,\omega\right)+\left|\alpha\right|_{H_{1}}\right),\\
\left|e_{x}\left(t,x,\alpha,\omega\right)\right|_{\mathcal{L}\left(H\right)}+\left|e_{\alpha}\left(t,x,\alpha,\omega\right)\right|_{\mathcal{L}\left(H_{1};H\right)}+\left|b_{x}\left(t,x,\alpha,\omega\right)\right|_{\mathcal{L}\left(H;\mathcal{L}_{2}^{0}\right)}\\
\hfill+\left|b_{\alpha}\left(t,x,\alpha,\omega\right)\right|_{\mathcal{L}\left(H_{1};\mathcal{L}_{2}^{0}\right)}\leq C.
\end{cases}
\]

\end{assumption}

\begin{assumption}
\label{assu:As3}The functional $h\left(\cdot,\omega\right):H\rightarrow\mathbb{R}$
is differentiable $\mathbb{P}-$a.s., and there exists an $\eta\in L_{\mathcal{F}_{T}}^{2}\left(\Omega\right)$
such that for any $x,\tilde{x}\in H$,
\[
\begin{cases}
h\left(x,\omega\right)\leq C\left(\eta\left(\omega\right)^{2}+\left|x\right|_{H}^{2}\right),\quad\left|h_{x}\left(0,\omega\right)\right|_{H}\leq C\eta\left(\omega\right),\quad\mathbb{P}-a.s.,\\
\left|h_{x}\left(x,\omega\right)-h_{x}\left(\tilde{x},\omega\right)\right|_{H}\leq C\left|x-\tilde{x}\right|_{H},\quad\mathbb{P}-a.s.
\end{cases}
\]
\end{assumption}

\begin{assumption}
\label{assu:As4}For $j=0,\ldots,n$, the functional $g^{j}:H\rightarrow\mathbb{R}$
is differentiable and for any $x,\tilde{x}\in H$,
\[
\left|g^{j}\left(x\right)\right|\leq C\left(1+\left|x\right|_{H}^{2}\right),\quad\left|g_{x}^{j}\left(x\right)-g_{x}^{j}\left(\tilde{x}\right)\right|_{H}\leq C\left|x-\tilde{x}\right|_{H}.
\]
\end{assumption}

Define the Hamiltonian 
\[
\mathbb{H}^{epig}\left(t,x,\alpha,p,q\right)=\left\langle p,a\left(t,x,\alpha\right)\right\rangle _{H}+\left\langle q,b\left(t,x,\alpha\right)\right\rangle _{\mathcal{L}_{2}^{0}},
\]
where $\left(t,x,\alpha,p,q\right)\in\left[0,T\right]\times H\times H_{1}\times H\times\mathcal{L}_{2}^{0}$
, with $a\left(\cdot\right)$, $b\left(\cdot\right)$ given by (\ref{eq:cont_epig}).
\begin{theorem}
\label{thm:Thm_control2}Consider the controlled stochastic epigenetic
system (\ref{eq:cont_sys})-(\ref{eq:cont_epig}). Let Assumptions
\ref{assu:As1_p}-\ref{assu:As4} hold. Then
\begin{enumerate}
\item For any $x_{0}\in H$ and $\alpha\left(\cdot\right)\in\mathcal{U}_{2}$,
system (\ref{eq:cont_sys}) has a unique mild solution, $x\left(\cdot\right)\equiv x\left(\cdot;x_{0},\alpha\right)\in L_{\mathbb{F}}^{2}\left(\Omega;C\left(\left[0,T\right];H\right)\right)$,
such that
\[
\left|x\left(\cdot\right)\right|_{L^{2}\left(\Omega;C\left(\left[0,T\right];H\right)\right)}\leq C\left(1+\left|x_{0}\right|_{H}\right).
\]
Moreover, for any $\alpha_{1}\in\mathcal{T}_{\Phi}\left(\bar{\alpha}\right)$
and $x_{1}\in T_{\mathcal{K}_{a}}^{b}\left(\bar{x}_{0}\right)$, (\ref{eq:Lin_sys})
has a unique solution, $x_{1}\left(\cdot\right)\in L_{\mathbb{F}}^{2}\left(\Omega;C\left(\left[0,T\right];H\right)\right)$,
and for $\psi\in L_{\mathbb{F}}^{2}\left(\Omega;BV_{0}\left(\left[0,T\right];H\right)\right)$,
(\ref{eq:Lin_adj}) has a unique transposition solution $\left(y,Y\right)\in D_{\mathbb{F}}\left(\left[0,T\right];L^{2}\left(\Omega;H\right)\right)\times L_{\mathbb{F}}^{2}\left(0,T;\mathcal{L}_{2}^{0}\right)$. 
\item Let $\left(\bar{x}\left(\cdot\right),\bar{\alpha}\left(\cdot\right),\bar{x}_{0}\right)$
be an optimal triple of Problem (\ref{eq:op_prob2}). If $E\left|g_{x}^{0}\left(\bar{x}\left(t\right)\right)\right|_{H}\neq0$
for any $t\in\mathcal{I}_{0}\left(\bar{x}\right)$, then there exists
$\lambda_{0}\in\left\{ 0,1\right\} $, $\lambda_{j}\geq0$ for $j\in\mathcal{I}\left(\bar{x}\right)$
and $\psi\in\left(\mathcal{Q}^{\left(1\right)}\right)^{-}$ with $\psi\left(0\right)=0$
verifying
\[
\lambda_{0}+\sum_{j\in\mathcal{I}\left(\bar{x}\right)}\lambda_{j}+\left|\psi\right|_{L_{\mathbb{F}}^{2}\left(\Omega;BV\left(0,T;H\right)\right)}\neq0,
\]
such that the corresponding transposition solution $\left(y\left(\cdot\right),Y\left(\cdot\right)\right)$
of the first order adjoint equation (\ref{eq:Lin_adj}) with $y\left(T\right)=-\lambda_{0}h_{x}\left(\bar{x}\left(T\right)\right)-\sum_{j\in\mathcal{I}\left(\bar{x}\right)}\lambda_{j}g_{x}^{j}\left(\bar{x}\left(T\right)\right)$
satisfies the variational inequality
\[
E\left\langle y\left(0\right),\nu\right\rangle _{H}+E\int_{0}^{T}\left\langle \mathbb{H}_{\alpha}^{epig}\left[t\right],v\left(t\right)\right\rangle _{H_{1}}dt\leq0,\textrm{ }\forall\nu\in\mathcal{T}_{\mathcal{K}_{a}}\left(\bar{x}_{0}\right),\textrm{ }\forall v\left(\cdot\right)\in\mathcal{T}_{\Phi}\left(\bar{\alpha}\right),
\]
where $\mathbb{H}_{\alpha}^{epig}\left[t\right]=\mathbb{H}_{\alpha}^{epig}\left(t,\bar{x}\left(t\right),\bar{\alpha}\left(t\right),y\left(t\right),Y\left(t\right)\right)$.
Furthermore, if $\mathcal{G}^{\left(1\right)}\cap\mathcal{Q}^{\left(1\right)}\cap\mathcal{E}^{\left(1\right)}\neq\emptyset$,
the above holds with $\lambda_{0}=1$.
\end{enumerate}
\end{theorem}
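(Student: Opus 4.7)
The theorem splits into a well-posedness statement (part i) and a Pontryagin-type maximum principle with endpoint and pointwise-in-time state constraints (part ii); I will treat them separately.

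For part (i), the plan is to rewrite (\ref{eq:cont_sys}) in mild form, $x(t)=S(t)x_0+\int_0^t S(t-s)a(s,x(s),\alpha(s))\,ds+\int_0^t S(t-s)b(s,x(s),\alpha(s))\,dW(s)$, and to close the contraction in $L^2_{\mathbb{F}}(\Omega;C([0,T];H))$. The $C_0$-semigroup $S(t)$ generated by $A$ on $H=H^1(\mathcal{O};\mathbb{R}^2)$ (with the stated boundary conditions) is a contraction; Assumptions \ref{assu:As1_p}--\ref{assu:As2_p} provide global Lipschitz continuity in $x$ and appropriate growth in $\alpha$, so Banach fixed point plus Burkholder--Davis--Gundy and Gronwall yield both existence/uniqueness and the linear estimate $|x(\cdot)|_{L^2(\Omega;C([0,T];H))}\leq C(1+|x_0|_H)$. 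The linearized equation (\ref{eq:Lin_sys}) has uniformly bounded, $\mathbb{F}$-adapted coefficients $a_i[t]$, $b_i[t]$ by Assumption \ref{assu:As2_p}, so the same argument applies verbatim. For the backward equation (\ref{eq:Lin_adj}), the measure-valued term $d\psi$ with $\psi\in L^2_{\mathbb{F}}(\Omega;BV_0([0,T];H))$ is integrated in the Stieltjes sense; well-posedness in the transposition sense then follows by invoking the abstract result in \cite{lu2021mathematical}, applied with terminal datum $y_T\in L^2_{\mathcal{F}_T}(\Omega;H)$.

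For part (ii), the strategy is the Ekeland--Lagrange approach adapted to the state-constrained stochastic setting. First I would penalize both the endpoint constraints $E[g^j(x(T))]\leq 0$ and the pointwise state constraint $E[g^0(x(t))]\leq 0$, forming a smooth cost $\mathcal{J}_\epsilon$ on the complete metric space $\mathcal{K}_a\times\mathcal{U}_2$ (the Ekeland metric on $\mathcal{U}_2$ being the $L^2$-distance of controls). Ekeland's variational principle produces quasi-minimizers $(x_0^\epsilon,\alpha^\epsilon)$ converging to $(\bar{x}_0,\bar{\alpha})$. Using convexity of $U$ and Assumption \ref{assu:As4}, I take convex perturbations $\alpha^\epsilon+\rho(v-\alpha^\epsilon)$ with $v(\cdot)\in\mathcal{T}_\Phi(\bar{\alpha})$ and $x_0^\epsilon+\rho(\nu-x_0^\epsilon)$ with $\nu\in\mathcal{T}_{\mathcal{K}_a}(\bar{x}_0)$, divide by $\rho$ and send $\rho\downarrow 0$ to obtain a variational inequality for the penalized problem in terms of the linearized state $x_1$ solving (\ref{eq:Lin_sys}). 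Extracting the penalty-induced multipliers $\lambda_0^\epsilon,\lambda_j^\epsilon\geq 0$ (for endpoint constraints) and an $H$-valued vector measure $\psi^\epsilon$ (for the pointwise constraint, via the Riesz representation of continuous linear functionals on $C([0,T];H)$), normalizing $\lambda_0^\epsilon+\sum_j\lambda_j^\epsilon+|\psi^\epsilon|=1$, and passing to a weak-* limit produces non-trivial limit multipliers. The duality identity $E\langle y(0),\nu\rangle_H+E\int_0^T\langle\mathbb{H}_\alpha^{epig}[t],v(t)\rangle_{H_1}dt\leq 0$ is then obtained by pairing (\ref{eq:Lin_sys}) against the transposition solution $(y,Y)$ of (\ref{eq:Lin_adj}) with the prescribed terminal data. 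The Slater-type qualification $\mathcal{G}^{(1)}\cap\mathcal{Q}^{(1)}\cap\mathcal{E}^{(1)}\neq\emptyset$ directly excludes $\lambda_0=0$: a feasible variation in the intersection would force all constraint multipliers to vanish simultaneously, contradicting the normalization unless $\lambda_0>0$.

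The hard part will be the pointwise-in-time state constraint (\ref{eq:state_const}), which is responsible for the measure-valued multiplier $\psi$. Three difficulties converge here: (a) representing a continuous linear functional on $C([0,T];H)$ coming from the pointwise penalization as an $H$-valued Borel measure of bounded variation (a vector-valued Riesz--Radon--Nikodym step); (b) ensuring adaptedness of the limit $\psi$, so that $\psi\in L^2_{\mathbb{F}}(\Omega;BV_0([0,T];H))$ with $\psi(0)=0$, which requires carefully selecting the weak-* convergent subsequence within the class of $\mathbb{F}$-progressive $BV$ processes; and (c) identifying the polarity $\psi\in(\mathcal{Q}^{(1)})^-$ from the sign of the penalty approximations. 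A secondary technical point is making the tangent cones $\mathcal{T}_\Phi(\bar{\alpha})$, $\mathcal{T}_{\mathcal{K}_a}(\bar{x}_0)$ and the active-index set $\mathcal{I}(\bar{x})$ concrete enough that the general variational inequality can be cast in the Pontryagin form stated; this is where the hypothesis $E|g_x^0(\bar{x}(t))|_H\neq 0$ on $\mathcal{I}_0(\bar{x})$ enters, ruling out degenerate activation of the pointwise constraint.
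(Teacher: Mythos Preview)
Your proposal essentially re-derives from scratch the abstract well-posedness and first-order necessary conditions of \cite{frankowska2020first} (and \cite{lu2021mathematical}), via a fixed-point argument for part~(i) and an Ekeland--penalization scheme for part~(ii). That is a legitimate route---indeed it is roughly how those references proceed---but it is far longer than the paper's. The paper simply \emph{invokes} those abstract results (restated verbatim in Subsection~\ref{subsec:Control}) once it has checked that the specific drift $a$ of the epigenetic system (\ref{eq:cont_epig}) meets the abstract hypotheses, Assumptions~\ref{assu:As1}--\ref{assu:As2}.

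That verification is actually the only system-specific content of the proof, and your sketch omits it. Note that Assumptions~\ref{assu:As1_p}--\ref{assu:As2_p} are placed on $e$ and $b$, \emph{not} on the full drift $a(t,x,\alpha)=(\tilde{f}(x),\tilde{g}(x))^{\top}+e(t,x,\alpha)$. To close either your contraction argument or the cited abstract theorems you still need global Lipschitz continuity and Fr\'echet differentiability of the Nemytskii map $x\mapsto(\tilde{f}(x),\tilde{g}(x))$ on the Hilbert space $H$. The paper disposes of this by working in $H=H^{1}(\mathcal{O};\mathbb{R}^{2})$ (more generally $H^{k}$ with $k>1/2$), observing that the mollified potential $F$ makes $\tilde{f},\tilde{g}$ smooth with bounded derivatives of all orders, and then citing standard superposition-operator results (\cite{efendiev2023linear,ambrosetti1995primer,appell1981implicit,appell1990nonlinear}); it also explicitly warns that in $L^{2}$ the Nemytskii operator is only G\^ateaux differentiable, which would not suffice for Assumption~\ref{assu:As2}. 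Without this step your linearized coefficients $a_{1}[t]=a_{x}(t,\bar{x}(t),\bar{\alpha}(t))$ in (\ref{eq:Lin_sys})--(\ref{eq:Lin_adj}) are not known to be bounded operators on $H$, so neither your fixed-point nor your duality argument closes.
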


\subsection{Biological Interpretation of the Mathematical Results}\label{subsec:bio_interp}

Epigenetic landscapes dictate cellular identity across all kingdoms of life, from the developmental branching of \textit{Arabidopsis thaliana} to the malignant polarization of human immune cells \cite{sierra2025tumor}. In plant biology, these chromatin-mediated states are remarkably robust; yet prolonged environmental noise---such as chronic drought or temperature stress---forces \textit{Arabidopsis} to undergo stable, inheritable epigenetic modifications that form a transgenerational stress memory. When these same conserved, noise-responsive epigenetic mechanisms operate within the chaotic arena of the tumor microenvironment (TME), their adaptive nature is co-opted. Tumor-associated macrophages (TAMs) do not merely respond to this malignancy; they are epigenetically rewritten by its sustained, chronic stress (induced polarization). 

This molecular hijacking helps explain the enigmatic behavior of macrophages in cancer. Recruited initially by tumor necrosis, these cells should theoretically adopt a classically activated, antitumoral M1-like killer phenotype. Instead, they undergo aggressive functional reprogramming. Once trapped within the hypoxic core, they actively promote tumor development and invasion by orchestrating angiogenesis, driving the epithelial-to-mesenchymal transition (EMT), and degrading the extracellular matrix to forge physical escape routes from the extremely isolated and harsh region of the tumor. Crucially, this reprogrammed state fuels metastasis by assisting the formation and migration of highly aggressive circulating tumor cell (CTC) clusters. 

While traditional frameworks treat macrophage polarization or organ differentiation as a simple binary switch, the underlying biological landscape is a spatiotemporal continuum of biochemical forces. We model this dynamic process as a stochastic partial differential equation (SPDE) gradient flow across Waddington's epigenetic landscape \eqref{eq:sys_epig}. Within this unified framework, the mathematical results stated in Theorem \ref{thm:Thm_large_dev}, Theorem \ref{thm:Thm_control1}, and Theorem \ref{thm:Thm_control2} do not only establish abstract probabilistic properties; they also provide a rigorous dictionary for translating stochastic infinite-dimensional dynamics into concrete biological statements.

\begin{itemize}
    \item \textbf{The Ergodic Nature of Cell Phenotypes (Theorem \ref{thm:Thm_large_dev}, Part (i)):} 
    The existence of a unique, normed stationary measure $\nu^\sigma$ and the corresponding almost-sure convergence of the time-average trajectory:
    \[
    P_{u_{0}}\left\{ \underset{T\rightarrow\infty}{\lim}\frac{1}{T}\int_{0}^{T}\chi_{\Gamma}\left(u^{\sigma}\left(t\right)\right)dt=\nu^{\sigma}\left(\Gamma\right)\right\} =1
    \]
    reveals that cell population distributions possess statistically stable long-time behavior. Biologically, this means that tracking a single macrophage or plant cell over an infinitely long time horizon within a noisy environment is statistically equivalent to taking a snapshot of a vast population of cells at a single instance. The stationary measure $\nu^\sigma$ acts as the true phenotypic profile of the tissue, capturing the exact proportion of time a cell spends in the M1, M2, or distinct floral organ states under continuous environmental fluctuations.

    \item \textbf{The Zero-Noise Limit and Ideal Homeostasis (Theorem \ref{thm:Thm_large_dev}, Part (ii)):}
    Our result proves that as the magnitude of the stochastic noise tends to zero ($\sigma \rightarrow 0$), the stationary measure $\nu^\sigma$ weakly converges to a discrete measure $\nu^0$ concentrated strictly at the preferred phenotypes $\hat{\varphi}^{(1)}, \dots, \hat{\varphi}^{(4)}$. Biologically, this asymptotic concentration demonstrates how small environmental noise concentrates cell populations near preferred biological states. In the absence of external noise, cells are perfectly protected by Waddington's canalization, relaxing entirely into their normal biological identities---such as predictable floral organ sepals/petals or stable antitumoral M1 macrophages. Crucially, the weights of the limit measure:
    \[
    \nu^{0}\left(\hat{\varphi}^{\left(k\right)}\right)=\triangle_{k}^{-1}\left(\sum_{j=1}^{m}\triangle_{j}^{-1}\right)^{-1}
    \]
    are inversely proportional to the square root of the basin sizes ($a_k = 1/(2\sqrt{c_k})$). This reveals that the steady states that are biologically favored (possessing larger basins of attraction, $c_k$) correspond to potentials with shallower Hessians ($\triangle_k$), ensuring that the system naturally aggregates at these preferred functional profiles when noise is minimized.

    \item \textbf{Large Deviations and Malignant Reprogramming (Theorem \ref{thm:Thm_large_dev}, Part (iii)):}
    In a highly volatile environment like the TME, the noise magnitude $\sigma$ is strictly positive, allowing the system to deviate from these deterministic basins. Part (iii) utilizes the Freidlin-Wentzell large deviation framework to show that epigenetic mutations correspond to rare transitions between basins. The exit-time asymptotic:
    \[
    \underset{\sigma\rightarrow0^{+}}{\lim}\sigma^{2}\ln \mathbb{E}_{g}\tau^{\sigma}=2\underset{\varphi\in\partial D}{\min}\left(U\left(\varphi\right)-U\left(\varphi_{0}\right)\right)
    \]
    demonstrates that the transition of a macrophage from an initially healthy M1 basin $D$ into a pro-tumoral M2 state is a rare, noise-induced large deviation. The expected time $\mathbb{E}_g \tau^\sigma$ required for a cell to escape its healthy domain grows exponentially with the energy barrier $\Delta U = \min(U(\varphi) - U(\varphi_0))$ along the boundary $\partial D$. Within the hypoxic core, chronic biochemical stressors lower this potential barrier while amplifying environmental noise $\sigma$, exponentially accelerating the rate at which macrophages cross the mathematical ``mountain pass'' points. This results in the rapid, irreversible epigenetic rewriting of TAMs into pro-metastatic drivers that fuel EMT, angiogenesis, and CTC cluster migration.
\end{itemize}

\subsubsection*{Therapeutic Controls and Biological Restrictions}

Building upon this predictive framework, the remaining theorems translate these dynamics into an actionable therapeutic paradigm by analyzing a stochastic optimal control framework designed for clinical application.

\begin{itemize}
    \item \textbf{Optimal Evolution Biasing (Theorem \ref{thm:Thm_control1}):}
    Our formulation proves that optimal interventions can bias stochastic evolution. In this setting, the control functions model the dynamic deployment of ``epidrugs,'' engineered either to reverse existing malignant epigenetic modifications or to act as epigenetic stabilizers. Theorem \ref{thm:Thm_control1} establishes that even in a highly volatile, fluctuating environment, a systematically designed drug intervention sequence can actively reshape the underlying potential landscape. By forcing targeted shifts in the state trajectories, these controls redirect the cell population away from pro-tumoral phenotypes and guide them back into healthy, stable basins of attraction.
    
    \item \textbf{State-Constrained Therapies (Theorem \ref{thm:Thm_control2}):}
    Crucially, real-world clinical interventions cannot operate without strict boundary conditions. Theorem \ref{thm:Thm_control2} establishes that state-constrained controls successfully model epigenetic therapies under real biological restrictions. In a clinical setting, an unconstrained optimal control might recommend drug concentrations that are highly effective at reprogramming macrophages but fatally toxic to the patient. By introducing state constraints directly into the infinite-dimensional stochastic system, our framework enforces strict boundaries that safeguard healthy tissue, minimize systemic cytotoxicity, and ensure the treatment regimen operates within realistic, tolerable dosage windows. 
\end{itemize}

We summarize these structural mappings from mathematical proofs to phenotypic dynamics in the flowchart shown in Fig.~\ref{fig:biology_diag}.

\begin{figure}[H]
\begin{centering}
\includegraphics[scale=0.6]{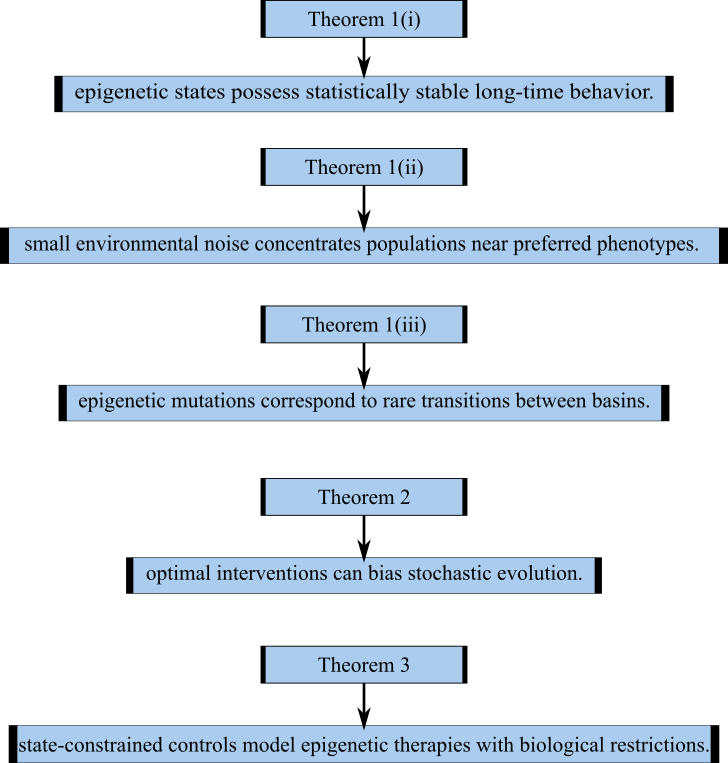}
\par\end{centering}
\caption{Flowchart mapping the mathematical results of Theorems 1, 2, and 3 to their corresponding biological interpretations. The sequential blocks illustrate how abstract behaviors---such as statistical stability, environmental noise effects, and rare basin transitions---translate to epigenetic modifications, population phenotypes, and optimal state-constrained therapeutic interventions. \label{fig:biology_diag}}
\end{figure}

\section{Preliminaries\label{sec:Preliminaries}}

For a thorough review of ergodic theory for stochastic PDE, see, e.g., \cite{da1996ergodicity,hairer2008ergodic}. For the the theory of
Large deviations, see \cite{freidlin1998random,varadhan2016large}.
See \cite{aastrom2012introduction,fleming2012deterministic,lu2021mathematical}
for a review of stochastic optimal control.

\subsection{Random Perturbations and Large Deviations\label{subsec:dev}}

We briefly recall the elements of Freidlin's theory that are needed in the proof of Theorem \ref{thm:Thm_large_dev}. Complete statements and proofs may be found in \cite{freidlin1988random}.

Consider the following stochastic evolution equation:
\begin{align}
\frac{\partial u_{k}^{\varepsilon}\left(t,x\right)}{\partial t}= & D_{k}\frac{\partial^{2}u_{k}^{\varepsilon}}{\partial x^{2}}+f_{k}\left(x,u_{1}^{\varepsilon},\ldots,u_{n}^{\varepsilon}\right)+\varepsilon\hat{\varsigma}_{k}\left(t,x\right),\label{eq:RDE}\\
u_{k}^{\varepsilon}\left(0,x\right)= & g_{k}\left(x\right),\nonumber 
\end{align}
$t>0$, $x\in\mathbb{T}$, $k=1,\ldots,n$. The perturbations $\hat{\varsigma}_{k}\left(t,x\right)$,
$k=1,\ldots,n$, are Gaussian fields which have independent values
for different $t$. Following \cite{freidlin1988random}, we consider a white noise perturbation:
$\hat{\varsigma}_{k}\left(t,x\right)=\partial^{2}\varsigma_{k}\left(t,x\right)/\partial t\partial x$,
where $\varsigma_{k}\left(t,x\right)$ are independent Brownian sheets
for different $k$. We assume that the functions $f_{k}\left(x,u\right)$,
$x\in\mathbb{T}$, $u\in\mathbb{R}^{n}$, are Lipschitz continuous.
Moreover, $g_{k}\left(x\right)\in C\left(\mathbb{T}\right)$ and $D_{k}>0$,
$k=1,\ldots,n$.

Under our assumptions, Theorem 1 in \cite{freidlin1988random} ensures
that (\ref{eq:RDE}) has a unique generalized solution. Furthermore,
the random process $u^{\varepsilon}\left(t\right)=u^{\varepsilon}\left(t,\cdot\right)$
in the state space $C\left(\mathbb{T}\right)$ is a Markov-Feller
process. 

Let $f\left(x,u\right)=\left(f_{1}\left(x,u\right),\ldots,f_{n}\left(x,u\right)\right)$,
where $u\in\mathbb{R}^{n}$ and $x\in\mathbb{T}$ is a parameter.
We call $f$ a potential field provided there exists a function, $F\left(x,u\right)$,
continuously differentiable in the variables $u\in\mathbb{R}^{n}$
and such that $f_{k}\left(x,u\right)=-\partial F\left(x,u\right)/\partial u_{k}$,
$x\in\mathbb{T}$, $u\in\mathbb{R}^{n}$, $k=1,\ldots,n$. Our main
interest is system (\ref{eq:RDE}) with the potential field $f\left(x,u\right)=-\nabla F\left(x,u\right)$.

Let $H^{1,2}$ be the Sobolev space of functions of $t\in\left[0,T\right]$
and $x\in\mathbb{T}$ with values in $\mathbb{R}^{n}$ which have
square integrable generalized derivatives of first order in $t$ and
second order in $x$.

By Theorem 6 in \cite{freidlin1988random}, the action functional
for the family of fields $u^{\varepsilon}\left(t,x\right)$, $0\leq t\leq T$,
$x\in\mathbb{T}$, in $C\left(\left[0,T\right]\times\mathbb{T};\mathbb{R}^{n}\right)$
as $\varepsilon\rightarrow0^{+}$ has the form $\varepsilon^{-2}S^{u}\left(\varphi\right)$
with

\[
S^{u}\left(\varphi\right)=\left\{ \begin{array}{l}
\frac{1}{2}\int_{\mathbb{T}}\int_{0}^{T}\sum_{k=1}^{n}\left|\frac{\partial\varphi_{k}}{\partial t}-D_{k}\frac{\partial^{2}\varphi_{k}}{\partial x^{2}}-f_{k}\left(x,\varphi\left(t,x\right)\right)\right|^{2}dtdx,\textrm{ }\varphi\in H^{1,2}\\
+\infty\textrm{ if }\varphi\in C\left(\left[0,T\right]\times\mathbb{T};\mathbb{R}^{n}\right)\backslash H^{1,2}.
\end{array}\right.
\]
Moreover, the functional $S^{u}\left(\varphi\right)$ is lower semicontinuous
on $C\left(\left[0,T\right]\times\mathbb{T};\mathbb{R}^{n}\right)$
and for every $s<\infty$, $g\in C\left(\mathbb{T};\mathbb{R}^{n}\right)$,
the set $\left\{ \varphi\in C\left(\left[0,T\right]\times\mathbb{T};\mathbb{R}^{n}\right):\varphi\left(0,x\right)=g\left(x\right),S^{u}\left(\varphi\right)\leq s\right\} $
is compact in $C\left(\left[0,T\right]\times\mathbb{T};\mathbb{R}^{n}\right)$.

We need the functional 
\begin{multline*}
V\left(g,h\right)=\inf\left\{ S^{u}\left(\varphi\right):\varphi\in C\left(\left[0,T\right]\times\mathbb{T};\mathbb{R}^{n}\right),\varphi\left(0,x\right)=g\left(x\right),\right.\\
\left.\varphi\left(T,x\right)=h\left(x\right),\textrm{ }T\geq0\right\} ,\quad g,h\in C\left(\mathbb{T};\mathbb{R}^{n}\right).
\end{multline*}

Assume that $\varphi_{0}\in C\left(\mathbb{T};\mathbb{R}^{n}\right)$
is an asymptotically stable equilibrium point of (\ref{eq:RDE}) with
$\varepsilon=0$ and let $D$ be a bounded open region in $C\left(\mathbb{T};\mathbb{R}^{n}\right)$
containing $\varphi_{0}$. The region $D\subset C\left(\mathbb{T};\mathbb{R}^{n}\right)$
is called regular if for every $\varphi\in\partial D$ there is a
twice continuously differentiable function, $h=h_{\varphi}\in C\left(\mathbb{T};\mathbb{R}^{n}\right)$,
such that $\varphi+th$ is an interior point of the complement of
$D\cup\partial D$ for all $t\geq0$ small enough. 

Let $\tau^{\varepsilon}=\tau_{D}^{\varepsilon}=\inf\left\{ t:u^{\varepsilon}\left(t\right)\notin D\right\} $
be the first exit time of $u\left(t\right)$ from $D$ and $V_{0}=\inf\left\{ V\left(\varphi_{0},\varphi\right):\varphi\in\partial D\right\} $.
Assume, that $D\in C\left(\mathbb{T};\mathbb{R}^{n}\right)$ is regular
and that $\varphi_{0}\in D$ is an asymptotically stable point of
(\ref{eq:RDE}) with $\varepsilon=0$. Furthermore, assume that every
trajectory of (\ref{eq:RDE}) with $\varepsilon=0$ starting at a
point $g\in D\cup\partial D$ does not leave $D$ for $t>0$ and tends
to $\varphi_{0}$ as $t\rightarrow\infty$. Then, by Theorem 8 in \cite{freidlin1988random}, for any $g\in D$
\[
\underset{\varepsilon\rightarrow0^{+}}{\lim}\textrm{ }\varepsilon^{2}\ln E_{g}\tau^{\varepsilon}=V_{0}.
\]
Moreover, if there is a unique $\varphi^{*}\in\partial D$ for which
$V\left(\varphi_{0},\varphi^{*}\right)=V_{0}$ , then the process
$u^{\varepsilon}\left(t\right)$ exists $D$ for the first time near
$\varphi^{*}$, that is, for any $\delta>0$ and $\varphi\in D$
\[
\underset{\varepsilon\rightarrow0^{+}}{\lim}P_{\varphi}\left\{ \underset{x\in\mathbb{T}}{\sup}\left|u_{\tau^{\varepsilon}}^{\varepsilon}\left(x\right)-\varphi^{*}\left(x\right)\right|>\delta\right\} =0.
\]

Now consider the functional $\mathscr{U}\left(\varphi\right)$
on $C\left(\mathbb{T};\mathbb{R}^{n}\right)$ taking finite values
on $H^{1}=H^{1}(\mathbb{T};\mathbb{R}^{n})$ and $+\infty$ on $C\left(\mathbb{T};\mathbb{R}^{n}\right)\backslash H^{1}$.
The functional $\mathscr{U}\left(\varphi\right)$ is called regular
if it is lower semicontinuous on $C\left(\mathbb{T};\mathbb{R}^{n}\right)$
equipped with the uniform convergence topology and the sets $\left\{ \varphi\in C\left(\mathbb{T};\mathbb{R}^{n}\right):\left\Vert \varphi\right\Vert \leq b,\mathscr{U}\left(\varphi\right)\leq a\right\} $
are compact in $C\left(\mathbb{T};\mathbb{R}^{n}\right)$ for any
$a,b\in\left(0,\infty\right)$.

For the convenience of the reader, we record in the following proposition the particular consequence of Theorem 9 in \cite{freidlin1988random} that will be used in the proof of Theorem \ref{thm:Thm_large_dev}.

\begin{proposition}[Theorem 9 in \cite{freidlin1988random}]
\label{prop:Exit_time}Assume that $\varphi_{0}\in C\left(\mathbb{T};\mathbb{R}^{n}\right)$
is an asymptotically stable equilibrium point of (\ref{eq:RDE}) with
$\varepsilon=0$. Let $B\left(x,u\right)=\left(B_{1}\left(x,u\right),\ldots,B_{n}\left(x,u\right)\right),$
$x\in\mathbb{T}$, $u\in C\left(\mathbb{T}\right)$, where
\[
B_{k}\left(x,u\right)=D_{k}\frac{d^{2}u}{dx^{2}}+f_{k}\left(x,u\right).
\]
Let a regular region, $D\subset C\left(\mathbb{T};\mathbb{R}^{n}\right)$,
be such that $\varphi_{0}\in D$ and every trajectory of (\ref{eq:RDE})
with $\varepsilon=0$ starting at a point $g\in D\cup\partial D$
does not leave $D$ for $t>0$ and tends to $\varphi_{0}$ as $t\rightarrow\infty$.
Furthermore, assume that there is a regular functional, $\mathscr{U}\left(\varphi\right)$,
and an operator, $L\left(\varphi\right)=\left(L_{1}\left(\varphi\right),\ldots,L_{n}\left(\varphi\right)\right)$,
$\varphi\in H^{1}$, such that
\begin{enumerate}
\item For $\varphi\in H^{2}$ the variational derivatives $\delta\mathscr{U}\left(\varphi\right)/\delta\varphi_{k}$,
$k=1,\ldots,n$, are defined and
\[
\left(\nabla\mathscr{U}\left(\varphi\right),L\left(\varphi\right)\right)=\int_{\mathbb{T}}\sum_{k=1}^{n}\frac{\delta\mathscr{U}}{\delta\varphi_{k}}\left(\varphi\left(x\right)\right)L_{k}\left(\varphi\left(x\right)\right)dx=0,\textrm{ }\varphi\in H^{2}.
\]
\item For the field $B\left(\varphi\right)=\left(B_{1}\left(\varphi\right),\ldots,B_{n}\left(\varphi\right)\right)$
we have
\[
B\left(\varphi\right)=-\nabla\mathscr{U}\left(\varphi\right)+L\left(\varphi\right),\textrm{ }\varphi\in H^{2}.
\]
\item For any $g\in H^{1}\cap\left(D\cup\partial D\right)$, there exists
a function $v\left(t,x\right)=\left(v_{1}\left(t,x\right),\ldots,v_{n}\left(t,x\right)\right)$,
$t>0$, $x\in\mathbb{T}$, such that
\[
\frac{\partial v_{k}\left(t,\cdot\right)}{\partial t}=-\frac{\delta\mathscr{U}\left(v\left(t,\cdot\right)\right)}{\delta v_{k}}-L_{k}\left(v_{k}\left(t,\cdot\right)\right),\textrm{ }t>0,k=1,\ldots,n,
\]
\[
v\left(0,x\right)=g\left(x\right),\textrm{ }\underset{t\rightarrow\infty}{\lim}\underset{x\in\mathbb{T}}{\sup}\left|v\left(t,x\right)-\varphi_{0}\left(x\right)\right|=0.
\]
\end{enumerate}
Then for $g\in H^{1}\cap\left(D\cup\partial D\right)$
\[
\inf\left\{ S^{u}\left(\varphi\right),\varphi\left(0,x\right)=\varphi_{0}\left(x\right),\varphi\left(T,x\right)=g\left(x\right),T>0\right\} =2\left(\mathscr{U}\left(g\right)-\mathscr{U}\left(\varphi_{0}\right)\right),
\]
and for any $g\in D$
\[
\underset{\varepsilon\rightarrow0^{+}}{\lim}\textrm{ }\varepsilon^{2}\ln E_{g}\tau^{\varepsilon}=2\underset{g\in\partial D}{\min}\left(\mathscr{U}\left(g\right)-\mathscr{U}\left(\varphi_{0}\right)\right).
\]
\end{proposition}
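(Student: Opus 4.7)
The plan is to combine a variational identity with an explicit near-minimizer construction based on the time-reversed descent flow of $\mathscr{U}$.

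First I would establish a key algebraic identity. Using condition (ii) to substitute $B(\varphi)=-\nabla\mathscr{U}(\varphi)+L(\varphi)$ into the integrand of $S^u$, and expanding squares pointwise in $L^2(\mathbb{T};\mathbb{R}^n)$, one gets
$$\|\partial_t\varphi-B(\varphi)\|^2=\|\partial_t\varphi-\nabla\mathscr{U}(\varphi)-L(\varphi)\|^2+4(\partial_t\varphi,\nabla\mathscr{U}(\varphi))-4(\nabla\mathscr{U}(\varphi),L(\varphi)).$$
Condition (i) kills the last cross term. Because $(\partial_t\varphi,\nabla\mathscr{U}(\varphi))=\tfrac{d}{dt}\mathscr{U}(\varphi)$, integration over $[0,T]$ yields
$$S^u(\varphi)=\tfrac{1}{2}\int_0^T\|\partial_t\varphi-\nabla\mathscr{U}(\varphi)-L(\varphi)\|^2\,dt+2\bigl(\mathscr{U}(g)-\mathscr{U}(\varphi_0)\bigr)$$
for every admissible path with $\varphi(0,\cdot)=\varphi_0$, $\varphi(T,\cdot)=g$. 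Nonnegativity of the remainder gives the lower bound $V(\varphi_0,g)\geq 2(\mathscr{U}(g)-\mathscr{U}(\varphi_0))$.

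For the matching upper bound I would construct a near-minimizer using condition (iii). That condition provides a descent trajectory $v(t,\cdot)$ starting at $g$ with $\partial_t v=-\nabla\mathscr{U}(v)-L(v)$ and $v(t,\cdot)\to\varphi_0$ uniformly as $t\to\infty$. The time-reversal $\tilde v_T(s,\cdot):=v(T-s,\cdot)$ solves the ascent equation $\partial_s\tilde v_T=\nabla\mathscr{U}(\tilde v_T)+L(\tilde v_T)$, so the remainder in the identity vanishes and $S^u(\tilde v_T)=2(\mathscr{U}(g)-\mathscr{U}(v(T,\cdot)))$. As $T\to\infty$, $v(T,\cdot)\to\varphi_0$, so the starting point of $\tilde v_T$ becomes arbitrarily close to $\varphi_0$. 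I would splice in a short bridging segment of duration $\eta$ connecting $\varphi_0$ to $\tilde v_T(0,\cdot)$; a linear-in-time interpolation has action bounded by $C(\eta^{-1}\|\tilde v_T(0,\cdot)-\varphi_0\|^2+\eta\cdot\text{const})$, which can be driven to zero by first taking $T$ large and then adjusting $\eta$. The concatenation has action arbitrarily close to $2(\mathscr{U}(g)-\mathscr{U}(\varphi_0))$, completing the upper bound.

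With the quasi-potential identity $V(\varphi_0,g)=2(\mathscr{U}(g)-\mathscr{U}(\varphi_0))$ established for every $g\in H^1\cap(D\cup\partial D)$, the exit-time asymptotics follow immediately from Theorem 8 of \cite{freidlin1988random}, quoted in the preceding paragraph of the paper, which gives $\lim_{\varepsilon\to 0^+}\varepsilon^2\ln E_g\tau^\varepsilon=\inf_{\varphi\in\partial D}V(\varphi_0,\varphi)$. Substituting the formula yields the claimed $2\min_{g\in\partial D}(\mathscr{U}(g)-\mathscr{U}(\varphi_0))$. The main obstacle will be the splicing step: controlling the action of the bridging segment requires more than the uniform convergence supplied by condition (iii); one needs control of $\tilde v_T(0,\cdot)-\varphi_0$ in a stronger (at least $L^2$) norm plus Lipschitz bounds on the drift near $\varphi_0$, which are supplied by the regularity hypothesis on $\mathscr{U}$ and the asymptotic stability of $\varphi_0$. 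A secondary subtlety is justifying the chain-rule identity $\int_0^T(\partial_t\varphi,\nabla\mathscr{U}(\varphi))\,dt=\mathscr{U}(g)-\mathscr{U}(\varphi_0)$ for merely $H^{1,2}$ paths, handled by a density argument together with the lower semicontinuity of $S^u$ already recalled.
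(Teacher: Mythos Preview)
The paper does not give its own proof of this proposition: it is introduced with the sentence ``We state Theorem 9 of \cite{freidlin1988random} in the following'' and is simply quoted as an external result, then invoked in the proof of Theorem~\ref{thm:Thm_large_dev}(iii). So there is no in-paper argument to compare against.

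That said, your sketch is the standard Freidlin--Wentzell route and is essentially how the result is proved in \cite{freidlin1988random}. The decomposition identity
\[
\|\partial_t\varphi-B(\varphi)\|^2=\|\partial_t\varphi-\nabla\mathscr{U}(\varphi)-L(\varphi)\|^2+4(\partial_t\varphi,\nabla\mathscr{U}(\varphi))-4(\nabla\mathscr{U}(\varphi),L(\varphi))
\]
is correct, condition~(i) kills the last term, and the chain rule turns the middle term into the telescoping increment $2(\mathscr{U}(g)-\mathscr{U}(\varphi_0))$, yielding the lower bound. The upper bound via time-reversal of the descent flow from condition~(iii), followed by a short bridge near $\varphi_0$, is precisely the construction in Freidlin's paper; the final invocation of Theorem~8 of \cite{freidlin1988random} for the exit-time asymptotics is exactly how the present paper uses the proposition as well. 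The caveats you flag (the bridging estimate and the chain rule for $H^{1,2}$ paths) are genuine technical points, and you have correctly identified how the regularity of $\mathscr{U}$ and the lower semicontinuity of $S^u$ are used to close them.
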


Following \cite{freidlin1988random}, equation (\ref{eq:RDE}) admits a unique invariant Gaussian measure, $\nu^{\varepsilon}$. Details of the construction and its properties can be found in \cite{freidlin1988random}. Assume that $f\left(u\right)=-\nabla F\left(u\right)$. Furthermore,
assume that $F\left(u\right)$ is three times continuously differentiable,
satisfies the inequality $F\left(u\right)>\alpha\left|u\right|+\beta$
for some $\alpha>0$, $\beta\in\left(-\infty,\infty\right)$ and attains
its absolute minimum at $m$ points $\hat{\varphi}^{\left(1\right)},\ldots,\hat{\varphi}^{\left(m\right)}\in\mathbb{R}^{n}$.
Moreover, let the critical points be nondegenerate, that is, $\triangle_{k}=\det\left(\partial^{2}F\left(\hat{\varphi}^{k}\right)/\partial u_{i}\partial u_{j}\right)\neq0$
for $k=1,\ldots m$. Then, by Theorem 5 in \cite{freidlin1988random},
the measure $\nu^{\varepsilon}$ weakly converges as $\varepsilon\rightarrow0$
to the measure $\nu^{0}$ concentrated at $m$ points $\hat{\varphi}^{\left(1\right)},\ldots,\hat{\varphi}^{\left(m\right)}\in\mathbb{R}^{n}$
and
\[
\nu^{0}\left(\hat{\varphi}^{\left(k\right)}\right)=\triangle_{k}^{-1}\left(\sum_{j=1}^{m}\triangle_{j}^{-1}\right)^{-1}.
\]

\subsection{Control Theory for Stochastic PDE\label{subsec:Control}}

As already stated, we must consider an optimal control problem with
endpoint/state constraints. Unfortunately, this problem is not well
understood up to now. Hence, we will start with a closely related
optimal control problem for which we can show a Pontriagyn-type maximum
principle. Afterward, we will study the case with endpoint/state constraints
using set-valued analysis and additional restrictions.

To apply the abstract stochastic control results of \cite{lu2021mathematical}, we additionally require the technical assumptions collected in Appendix \ref{sec:appendixA}.

Under Assumption \ref{assu:S1}, Proposition 12.1 of \cite{lu2021mathematical}
ensures that, for any $x_{0}\in L_{\mathcal{F}_{0}}^{p_{0}}\left(\Omega;H\right)$,
$p_{0}\geq2$, and $\alpha\left(\cdot\right)\in\mathcal{U}\left[0,T\right]$,
system (\ref{eq:cont_sys}) has a unique mild solution, $x\left(\cdot\right)\equiv x\left(\cdot;x_{0},\alpha\right)\in C_{\mathbb{F}}\left(\left[0,T\right];L^{p_{0}}\left(\Omega;H\right)\right)$,
such that
\[
\left|x\left(\cdot\right)\right|_{C_{\mathbb{F}}\left(\left[0,T\right];L^{p_{0}}\left(\Omega;H\right)\right)}\leq C\left(1+\left|x_{0}\right|_{L_{\mathcal{F}_{0}}^{p_{0}}\left(\Omega;H\right)}\right).
\]

Let Assumptions \ref{assu:S2} and \ref{assu:S1} hold. Then, by Theorem
4.19 of \cite{lu2021mathematical}, equation (\ref{eq:backward_control})
is well-posed in the sense of transposition solution (see Definition
4.13 of \cite{lu2021mathematical}); if we consider that the filtration
$\mathbf{F}$ is the natural one and $y_{T}\in L_{\mathcal{F}_{T}}^{p}$,
$p\in\left(1,2\right]$, then the solution is mild (see Section 4.2.1 of \cite{lu2021mathematical} for the notions of solutions and Section
4.2.2 of \cite{lu2021mathematical} for the case of natural filtration). 

We state Theorem 12.4 of \cite{lu2021mathematical} in the following
\begin{proposition}[Theorem 12.4 in \cite{lu2021mathematical}]
Let Assumptions \ref{assu:S2}-\ref{assu:S4} and \ref{assu:S1}-\ref{assu:s5}
hold. Let $\left(\bar{x}\left(\cdot\right),\bar{\alpha}\left(\cdot\right)\right)$
be an optimal pair for Problem (\ref{eq:op_cont_prob}) with $x_{0}\in L_{\mathcal{F}_{0}}^{2}\left(\Omega;H\right)$.
Then,
\begin{multline*}
\mathrm{Re}\left\langle a_{u}\left(t,\bar{x}\left(t\right),\bar{\alpha}\left(t\right)\right)^{*}y\left(t\right)+b_{u}\left(t,\bar{x}\left(t\right),\bar{\alpha}\left(t\right)\right)^{*}Y\left(t\right)-g_{u}\left(t,\bar{x}\left(t\right),\bar{\alpha}\left(t\right)\right),\right.\\
\left.\alpha-\bar{\alpha}\left(t\right)\right\rangle _{\tilde{H}}\leq0,
\end{multline*}
a.e. $\left(t,\omega\right)\in\left[0,T\right]\times\Omega$, $\forall\alpha\in U$,
where $\left(y\left(\cdot\right),Y\left(\cdot\right)\right)$ is the
transposition solution of (\ref{eq:backward_control}).
\end{proposition}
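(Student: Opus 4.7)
The plan is to apply the convex variation (perturbation) technique, which is the standard route to a Pontryagin-type maximum principle when the control region is convex (Assumption~\ref{assu:S4}), and to combine it with the duality between the linearized forward SPDE and the first-order adjoint BSDE interpreted in the sense of transposition solutions, whose well-posedness is already stated in Theorem~\ref{thm:Thm_control1}(i).

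First I fix an arbitrary $\alpha(\cdot)\in\mathcal{U}[0,T]$ and form $\alpha^{\epsilon}(t):=\bar{\alpha}(t)+\epsilon(\alpha(t)-\bar{\alpha}(t))$ for $\epsilon\in(0,1]$; convexity of $U$ keeps $\alpha^{\epsilon}\in\mathcal{U}[0,T]$. Let $x^{\epsilon}(\cdot)$ be the associated mild solution. A mild-form Gronwall argument, using the Lipschitz estimate of Theorem~\ref{thm:Thm_control1}(i) and the $C^{1}$ bounds of Assumption~\ref{assu:s5}, shows that $z^{\epsilon}:=(x^{\epsilon}-\bar{x})/\epsilon$ converges in $C_{\mathbb{F}}([0,T];L^{2}(\Omega;H))$ to the mild solution $z$ of the linearized SPDE
\begin{align*}
dz(t) & =\bigl(Az(t)+a_{x}[t]z(t)+a_{\alpha}[t](\alpha(t)-\bar{\alpha}(t))\bigr)dt\\
 & \quad +\bigl(b_{x}[t]z(t)+b_{\alpha}[t](\alpha(t)-\bar{\alpha}(t))\bigr)dW(t),\quad z(0)=0,
\end{align*}
where $a_{x}[t]=a_{x}(t,\bar{x}(t),\bar{\alpha}(t))$ and analogously for the other derivatives. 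Optimality of $(\bar{x},\bar{\alpha})$, together with a first-order expansion of $g$ and $h$ permitted by Assumptions~\ref{assu:S2} and \ref{assu:s5}, then yields (after dividing by $\epsilon$ and letting $\epsilon\downarrow0$)
\[
E\int_{0}^{T}\bigl[\langle g_{x}[t],z(t)\rangle_{H}+\langle g_{\alpha}[t],\alpha(t)-\bar{\alpha}(t)\rangle_{\tilde{H}}\bigr]dt+E\langle h_{x}(\bar{x}(T)),z(T)\rangle_{H}\geq0.
\]

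Next I invoke the duality identity between $z$ and the transposition solution $(y,Y)$ of (\ref{eq:backward_control}); this identity is tailored precisely to this pairing, and reads
\[
-E\langle h_{x}(\bar{x}(T)),z(T)\rangle_{H}=E\int_{0}^{T}\bigl[\langle g_{x}[t],z(t)\rangle_{H}+\langle a_{\alpha}[t]^{*}y(t)+b_{\alpha}[t]^{*}Y(t),\alpha(t)-\bar{\alpha}(t)\rangle_{\tilde{H}}\bigr]dt.
\]
Substituting this into the previous inequality cancels the $g_{x}z$ term and the endpoint contribution, leaving
\[
E\int_{0}^{T}\textrm{Re}\langle a_{\alpha}[t]^{*}y(t)+b_{\alpha}[t]^{*}Y(t)-g_{\alpha}[t],\alpha(t)-\bar{\alpha}(t)\rangle_{\tilde{H}}dt\leq0.
\]
A standard measurable-selection argument then upgrades this to the claimed pointwise inequality: for any fixed $v\in U$ and any progressively measurable set $A$, the control equal to $v$ on $A$ and to $\bar{\alpha}$ off $A$ is admissible, so arbitrariness of $A$ forces the integrand to be nonpositive a.e.\ for this $v$; a countable-dense subset of $U$ (available since $\tilde{H}$ is separable by Assumption~\ref{assu:S4}) plus continuity of the integrand in $v$ then gives the inequality for every $v\in U$ outside a single null set.

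The main obstacle is the rigorous justification of the duality identity in the third paragraph. Because the mild solution $z$ of the linearized equation does not admit a genuine It\^o formula in $H$, one cannot directly expand $\langle y,z\rangle$ as a semimartingale; instead the adjoint equation must be interpreted in the transposition sense of Definition~4.13 of \cite{lu2021mathematical}, and the identity above is essentially that definition tested against $z$. Verifying that $z$ lies in the appropriate test class and that the resulting bilinear pairing reproduces the stated right-hand side is where the technical work concentrates; this was carried out as Theorem~12.4 of \cite{lu2021mathematical}, from which the present proposition follows.
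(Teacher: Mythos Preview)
Your proposal is correct and follows the same route as the paper: the proposition is simply Theorem~12.4 of \cite{lu2021mathematical}, which the paper quotes without proof, and you arrive at the same citation after sketching the standard convex-variation/transposition-duality argument that underlies that theorem. Your outline is accurate and in fact more informative than the paper's treatment, which merely states the result.
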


Now, we turn to the case of necessary optimality conditions for controlled
stochastic differential equations with control and state constraints.
Here, we follow closely the results presented in \cite{frankowska2020first}.

Under Assumption \ref{assu:As1}, Lemma 2.1 of \cite{frankowska2020first}
ensures that, for any $x_{0}\in H$ and $\alpha\left(\cdot\right)\in\mathcal{U}_{2}$,
system (\ref{eq:cont_sys}) has a unique mild solution, $x\left(\cdot\right)\equiv x\left(\cdot;x_{0},\alpha\right)\in L_{\mathbb{F}}^{2}\left(\Omega;C\left(\left[0,T\right];H\right)\right)$,
such that
\[
\left|x\left(\cdot\right)\right|_{L^{2}\left(\Omega;C\left(\left[0,T\right];H\right)\right)}\leq C\left(1+\left|x_{0}\right|_{H}\right).
\]

We review some basic ideas from set-valued analysis needed to study
(\ref{eq:op_prob2}); see \cite{aubin2009set} for details.

Let $Z$ be a Banach space and consider any subset $\mathcal{K}\subset Z$.
For $z\in\mathcal{K}$, the Clarke tangent cone $\mathcal{C}_{\mathcal{K}}\left(z\right)$
to $\mathcal{K}$ at $z$ is defined by
\[
\mathcal{C}_{\mathcal{K}}\left(z\right)=\left\{ v\in Z:\underset{\varepsilon\rightarrow0,y\in\mathcal{K},y\rightarrow z}{\lim}\frac{dist\left(y+\varepsilon v,\mathcal{K}\right)}{\varepsilon}=0\right\} ,
\]
where $dist\left(w,\mathcal{K}\right)=\underset{y\in\mathcal{K}}{\inf}\left|y-w\right|_{Z},$
$w\in Z$. Moreover, the adjacent cone $T_{\mathcal{K}}^{b}\left(z\right)$
to $\mathcal{K}$ at $z$ is given by
\[
T_{\mathcal{K}}^{b}\left(z\right)=\left\{ v\in Z:\underset{\varepsilon\rightarrow0^{+}}{\lim}\frac{dist\left(y+\varepsilon v,\mathcal{K}\right)}{\varepsilon}=0\right\} .
\]
$\mathcal{C}_{\mathcal{K}}\left(z\right)$ is a closed convex cone
in $Z$ and $\mathcal{C}_{\mathcal{K}}\left(z\right)\subset T_{\mathcal{K}}^{b}\left(z\right)$.
If $\mathcal{K}$ is convex, then $\mathcal{C}_{\mathcal{K}}\left(z\right)=T_{\mathcal{K}}^{b}\left(z\right)=\textrm{cl}\left\{ \alpha\left(\hat{z}-z\right)\textrm{ }:\textrm{ }z\geq0,\textrm{ }\hat{z}\in\mathcal{K}\right\} $,
where $\textrm{cl}\left\{ \cdot\right\} $ represents the closure
of the set. 

For a cone $\mathcal{K}$ in $Z$, the closed convex cone $\mathcal{K}^{-}=\left\{ \xi\in Z^{*}\textrm{ }:\textrm{ }\xi\left(z\right)\leq0\textrm{ for all }z\in\mathcal{K}\right\} $ is called the dual cone of $\mathcal{K}$.

Let $\left(\varTheta,\varSigma\right)$ be a measurable space and
$F:\Theta\rightsquigarrow Z$ be a set-valued map. The domain of $F$
is $\textrm{Dom}\left(F\right)=\left\{ \xi\in\Theta\textrm{ }:\textrm{ }F\left(\xi\right)\cap B\neq\emptyset\right\} $.
$F$ is measurable if $F^{-1}\left(B\right)=\left\{ \xi\in\Theta\textrm{ }:\textrm{ }F\left(\xi\right)\cap B\neq\emptyset\right\} \in\Sigma$
for any Borel set $B\in\mathcal{B}\left(Z\right)$.

Assume that $\left(\Theta,\Sigma,\mu\right)$ is a complete $\sigma-$finite
measure space and $F$ is a set-valued map from $\Theta$ to the separable
Banach space $\tilde{Z}$ with nonempty closed image. Then, by Lemma
2.3 \cite{frankowska2020first}, $F$ is measurable if and only if
its graph belongs to $\Sigma\otimes\mathcal{B}\left(\tilde{Z}\right)$. 

A map, $\zeta:\left(\Omega,\mathcal{F}\right)\rightsquigarrow Z$,
is a set valued random variable if it is measurable. A map, $\Psi:\left[0,T\right]\times\Omega\rightsquigarrow Z$,
is a measurable set-valued stochastic process if $\Psi$ is $\mathcal{B}\left[0,T\right]\otimes\mathcal{F}-$measurable;
$\Psi$ is adapted if $\Psi\left(t,\cdot\right)$ is $\mathcal{F}_{t}-$measurable
for all $t\in\left[0,T\right]$.

Let 
\[
\mathcal{G}=\left\{ B\in\mathcal{B}\left(\left[0,T\right]\right)\otimes\mathcal{F}\textrm{ }:\textrm{ }B_{t}\in\mathcal{F}_{t},\textrm{ }\forall t\in\left[0,T\right]\right\} ,
\]
where $B_{t}=\left\{ \omega\in\Omega\textrm{ }:\textrm{ }\left(t,\omega\right)\in B\right\} $.
Let $\textrm{m}$ be the Lebesgue measure on $\left[0,T\right]$.
We consider the completion of the measure space $\left(\left[0,T\right]\times\Omega,\mathcal{G},\mu=\textrm{m}\times P\right)$,
and due to Lemma 2.4 \cite{frankowska2020first}, we use the same
notation for the completion. 

Let $H$ be a separable Hilbert space. By Lemma 2.5 \cite{frankowska2020first},
a set-valued stochastic process, $F:\left[0,T\right]\times\Omega\rightsquigarrow H$,
is $\mathcal{B}\left(\left[0,T\right]\right)\otimes\mathcal{F}-$measurable
and $\mathbb{F}-$adapted if and only if $F$ is $\mathcal{G}-$measurable.

Let $\Phi$ be a set-valued stochastic process such that
\begin{enumerate}
\item $\Phi$ in $\mathcal{B}\left(\left[0,T\right]\right)\otimes\mathcal{F}-$measurable
and $\mathbb{F}-$adapted,
\item for a.e. $\left(t,\omega\right)\in\left[0,T\right]\times\Omega$,
$\Phi\left(t,\omega\right)$ is a nonempty closed convex cone in $H_{1}$.
\item $\Phi\left(t,\omega\right)\subset T_{U}^{b}\left(\bar{\alpha}\left(t,\omega\right)\right)$,
for a.e. $\left(t,\omega\right)\in\left[0,T\right]\times\Omega$.
\end{enumerate}
Define 
\[
\mathcal{T}_{\Phi}\left(\bar{\alpha}\right)=\left\{ \alpha\left(\cdot\right)\in L_{\mathbb{F}}^{2}\left(0,T;H_{1}\right)\textrm{ }:\textrm{ }\alpha\left(t,\omega\right)\in\Phi\left(t,\omega\right),\textrm{ a.e. }\left(t,\omega\right)\in\left[0,T\right]\times\Omega\right\} .
\]
$\mathcal{T}_{\Phi}\left(\bar{\alpha}\right)$ is a closed convex
cone in $L_{\mathbb{F}}^{2}\left(0,T;H_{1}\right)$. Moreover, since
$0\in\mathcal{T}_{\Phi}\left(\bar{\alpha}\right)$, $\mathcal{T}_{\Phi}\left(\bar{\alpha}\right)$
is nonempty. 

Under Assumption \ref{assu:As1}, for any $\alpha_{1}\in\mathcal{T}_{\Phi}\left(\bar{\alpha}\right)$
and $x_{1}\in T_{\mathcal{K}_{a}}^{b}\left(\bar{x}_{0}\right)$, (\ref{eq:Lin_sys})
has a unique solution, $x_{1}\left(\cdot\right)\in L_{\mathbb{F}}^{2}\left(\Omega;C\left(\left[0,T\right];H\right)\right)$;
see Section 3 of \cite{frankowska2020first}. Furthermore, under Assumptions
\ref{assu:As1}-\ref{assu:As2} and $\psi\in L_{\mathbb{F}}^{2}\left(\Omega;BV_{0}\left(\left[0,T\right];H\right)\right)$,
Lemma 3.5 of \cite{frankowska2020first} ensures that (\ref{eq:Lin_adj})
has a unique transposition solution $\left(y,Y\right)\in D_{\mathbb{F}}\left(\left[0,T\right];L^{2}\left(\Omega;H\right)\right)\times L_{\mathbb{F}}^{2}\left(0,T;\mathcal{L}_{2}^{0}\right)$;
see Definition 3.1 \cite{frankowska2020first}.

Let $T_{\mathcal{K}_{a}}\left(\bar{x}_{0}\right)$ be a nonempty closed
convex cone contained in $T_{\mathcal{K}_{a}}^{b}\left(\bar{x}_{0}\right)$.
Define
\begin{multline*}
\mathcal{G}^{\left(1\right)}=\left\{ x_{1}\left(\cdot\right)\in L_{\mathbb{F}}^{2}\left(\Omega;C\left(\left[0,T\right];H\right)\right)\textrm{ }:\textrm{ }\right.\\
\left.x_{1}\left(\cdot\right)\textrm{ solves (\ref{eq:Lin_sys}) with }\alpha_{1}\in\mathcal{T}_{\Phi}\left(\bar{\alpha}\right)\textrm{ and }x_{1}\in T_{\mathcal{K}_{a}}\left(\bar{x}_{0}\right)\right\} ,
\end{multline*}
\[
\mathcal{I}_{0}\left(\bar{x}\right)=\left\{ t\in\left[0,T\right]\textrm{ }:\textrm{ }E\left[g^{0}\left(\bar{x}\left(t\right)\right)\right]=0\right\} ,
\]
\[
\mathcal{I}\left(\bar{x}\right)=\left\{ j\in\left\{ 1,\ldots,n\right\} \textrm{ }:\textrm{ }E\left[g^{j}\left(\bar{x}\left(T\right)\right)=0\right]\right\} ,
\]
\[
\mathcal{Q}^{\left(1\right)}=\left\{ z\left(\cdot\right)\in L_{\mathbb{F}}^{2}\left(\Omega;C\left(\left[0,T\right];H\right)\right)\textrm{ }:\textrm{ }E\left\langle g_{x}^{0}\left(\bar{x}\left(t\right)\right),z\left(t\right)\right\rangle _{H}<0,\textrm{ }\forall t\in\mathcal{I}_{0}\left(\bar{x}\right)\right\} ,
\]
\[
\mathcal{E}^{\left(1,j\right)}=\left\{ z\left(\cdot\right)\in L_{\mathbb{F}}^{2}\left(\Omega;C\left(\left[0,T\right];H\right)\right)\textrm{ }:\textrm{ }E\left\langle g_{x}^{j}\left(\bar{x}\left(T\right)\right),z\left(T\right)\right\rangle _{H}<0\right\} ,\textrm{ }\forall j\in\mathcal{I}\left(\bar{x}\right),
\]
\[
\mathcal{E}^{\left(1\right)}=\bigcap_{j\in\mathcal{I}\left(\bar{x}\right)}\mathcal{E}^{\left(1,j\right)}.
\]

Since $\mathcal{T}_{\Phi}\left(\bar{\alpha}\right)$ and $\mathcal{T}_{\mathcal{K}_{a}}\left(\bar{x}_{0}\right)$
are nonempty convex cones, $\mathcal{G}^{\left(1\right)}$ is a nonempty
convex cone in $L_{\mathbb{F}}^{2}\left(\Omega;C\left(\left[0,T\right];H\right)\right)$.
Moreover, if $\mathcal{I}_{0}\left(\bar{x}\right)=\emptyset$ (resp.
$\mathcal{I}\left(\bar{x}\right)=\emptyset$), then $\mathcal{Q}^{\left(1\right)}=L_{\mathbb{F}}^{2}\left(\Omega;C\left(\left[0,T\right];H\right)\right)$
(resp. $\mathcal{E}^{\left(1\right)}=L_{\mathbb{F}}^{2}\left(\Omega;C\left(\left[0,T\right];H\right)\right)$).
By Lemma 3.4 \cite{frankowska2020first}, $\mathcal{Q}^{\left(1\right)}$
is an open convex cone in $L_{\mathbb{F}}^{2}\left(\Omega;C\left(\left[0,T\right];H\right)\right)$.

Define the Hamiltonian 
\[
\mathbb{H}\left(t,x,\alpha,p,q,\omega\right)=\left\langle p,a\left(t,x,\alpha,\omega\right)\right\rangle _{H}+\left\langle q,b\left(t,x,\alpha,\omega\right)\right\rangle _{\mathcal{L}_{2}^{0}},
\]
where $\left(t,x,\alpha,p,q,\omega\right)\in\left[0,T\right]\times H\times H_{1}\times H\times\mathcal{L}_{2}^{0}\times\Omega$.

Now, we state a first-order necessary optimality condition as presented
in Theorem 3.1 \cite{frankowska2020first}.
\begin{proposition}[Theorem 3.1 in \cite{frankowska2020first}]
Let Assumptions \ref{assu:As1}-\ref{assu:As2} and \ref{assu:As3}-\ref{assu:As4}
hold. Let $\left(\bar{x}\left(\cdot\right),\bar{\alpha}\left(\cdot\right),\bar{x}_{0}\right)$
be an optimal triple of Problem (\ref{eq:op_prob2}). If $E\left|g_{x}^{0}\left(\bar{x}\left(t\right)\right)\right|_{H}\neq0$
for any $t\in\mathcal{I}_{0}\left(\bar{x}\right)$, then there exists
$\lambda_{0}\in\left\{ 0,1\right\} $, $\lambda_{j}\geq0$ for $j\in\mathcal{I}\left(\bar{x}\right)$
and $\psi\in\left(\mathcal{Q}^{\left(1\right)}\right)^{-}$ with $\psi\left(0\right)=0$
verifying
\[
\lambda_{0}+\sum_{j\in\mathcal{I}\left(\bar{x}\right)}\lambda_{j}+\left|\psi\right|_{L_{\mathbb{F}}^{2}\left(\Omega;BV\left(0,T;H\right)\right)}\neq0,
\]
such that the corresponding transposition solution $\left(y\left(\cdot\right),Y\left(\cdot\right)\right)$
of the first order adjoint equation (\ref{eq:Lin_adj}) with $y\left(T\right)=-\lambda_{0}h_{x}\left(\bar{x}\left(T\right)\right)-\sum_{j\in\mathcal{I}\left(\bar{x}\right)}\lambda_{j}g_{x}^{j}\left(\bar{x}\left(T\right)\right)$
satisfies the variational inequality
\[
E\left\langle y\left(0\right),\nu\right\rangle _{H}+E\int_{0}^{T}\left\langle \mathbb{H}_{\alpha}\left[t\right],v\left(t\right)\right\rangle _{H_{1}}dt\leq0,\textrm{ }\forall\nu\in\mathcal{T}_{\mathcal{K}_{a}}\left(\bar{x}_{0}\right),\textrm{ }\forall v\left(\cdot\right)\in\mathcal{T}_{\Phi}\left(\bar{\alpha}\right),
\]
where $\mathbb{H}_{\alpha}\left[t\right]=\mathbb{H}_{\alpha}\left(t,\bar{x}\left(t\right),\bar{\alpha}\left(t\right),y\left(t\right),Y\left(t\right),\omega\right)$.
Furthermore, if $\mathcal{G}^{\left(1\right)}\cap\mathcal{Q}^{\left(1\right)}\cap\mathcal{E}^{\left(1\right)}\neq\emptyset$,
the above holds with $\lambda_{0}=1$.
\end{proposition}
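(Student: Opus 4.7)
The statement is, up to notation, Theorem 3.1 of \cite{frankowska2020first}, so the plan is to adapt that argument to the present set-up. The proof naturally splits into a linearisation/duality step, a separation/multiplier extraction step, and a constraint-qualification upgrade.

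First, under Assumption \ref{assu:As1}, the linearised system (\ref{eq:Lin_sys}) has a unique mild solution $x_1(\cdot) \in L^2_{\mathbb{F}}(\Omega; C([0,T]; H))$ for every $\alpha_1 \in \mathcal{T}_\Phi(\bar{\alpha})$ and $x_1(0) \in T^b_{\mathcal{K}_a}(\bar{x}_0)$, and under Assumption \ref{assu:As2} the control-to-state map $(x_0,\alpha) \mapsto x(\cdot;x_0,\alpha)$ is Fr{\'e}chet differentiable with derivative $x_1(\cdot)$; these facts are obtained by the stochastic-convolution and fixed-point estimates recalled in Lemma 2.1 of \cite{frankowska2020first}. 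The analytical bridge between primal variations and the adjoint system is the duality identity
\[
E\langle y(T), x_1(T) \rangle_H - E\langle y(0), x_1(0) \rangle_H = E\int_0^T \langle \mathbb{H}_\alpha[t], \alpha_1(t) \rangle_{H_1} dt + E\int_{[0,T]} \langle x_1(t), d\psi(t) \rangle_H,
\]
valid for the transposition solution $(y,Y)$ of (\ref{eq:Lin_adj}) and any $\psi \in L^2_{\mathbb{F}}(\Omega; BV_0([0,T]; H))$, as in Lemma 3.5 of \cite{frankowska2020first}.

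Second, differentiating $h, g^0, g^j$ via Assumptions \ref{assu:As3}--\ref{assu:As4} and using optimality of $(\bar{x}, \bar{\alpha}, \bar{x}_0)$, I would show that the convex cone $\mathcal{G}^{(1)}$ of attainable linearised directions cannot strictly decrease the cost while simultaneously improving every active constraint. The hypothesis $E|g^0_x(\bar{x}(t))|_H \neq 0$ on $\mathcal{I}_0(\bar{x})$ guarantees via Lemma 3.4 of \cite{frankowska2020first} that $\mathcal{Q}^{(1)}$ is an open convex cone, so a Hahn--Banach separation in $L^2_{\mathbb{F}}(\Omega; C([0,T]; H))$ produces a nontrivial triple $(\lambda_0, (\lambda_j)_{j \in \mathcal{I}(\bar{x})}, \psi) \in \{0,1\} \times \mathbb{R}_+^{|\mathcal{I}(\bar{x})|} \times (\mathcal{Q}^{(1)})^-$ with $\psi(0) = 0$. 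Setting $y(T) = -\lambda_0 h_x(\bar{x}(T)) - \sum_{j \in \mathcal{I}(\bar{x})} \lambda_j g^j_x(\bar{x}(T))$ and inserting the multiplier $\psi$ into the duality identity converts the separation into the claimed variational inequality for all $\nu \in \mathcal{T}_{\mathcal{K}_a}(\bar{x}_0)$ and $v \in \mathcal{T}_\Phi(\bar{\alpha})$. The final upgrade $\lambda_0 = 1$ under the qualification $\mathcal{G}^{(1)} \cap \mathcal{Q}^{(1)} \cap \mathcal{E}^{(1)} \neq \emptyset$ is a standard Mangasarian--Fromovitz argument: if $\lambda_0 = 0$, the separating functional vanishes on this nonempty open convex set, forcing $\lambda_j = 0$ and $\psi = 0$, contradicting nontriviality.

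The hard part is the representation of the dual cone $(\mathcal{Q}^{(1)})^-$ by measure-valued adapted processes in $L^2_{\mathbb{F}}(\Omega; BV_0([0,T]; H))$: the continuum of pointwise state constraints $E[g^0(\bar{x}(t))] \leq 0$ demands a Riesz-type representation that must be threaded through measurability and adaptedness of the involved set-valued maps, using the machinery recalled in Subsection \ref{subsec:Control}. The nondegeneracy assumption on $g^0_x$ is what prevents $\psi$ from degenerating away from $\mathcal{I}_0(\bar{x})$ and is what makes the separation argument genuinely informative.
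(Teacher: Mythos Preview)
Your proposal correctly identifies that this proposition is exactly Theorem 3.1 of \cite{frankowska2020first}, and indeed the paper does not prove it at all: it is stated in Subsection~\ref{subsec:Control} purely as a quotation of that result, with no argument given. Your sketch of the linearisation/duality, separation/multiplier, and constraint-qualification steps is a faithful outline of the Frankowska--L\"u--Zhang proof and is the right thing to do if a proof is required, but in the context of this paper the proposition functions as an imported black box used later to deduce Theorem~\ref{thm:Thm_control2}.
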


\section{Proof of Results\label{sec:Proof_results}}
\begin{proof}[Proof of Theorem \ref{thm:Thm_large_dev}] Notice that, by Remark \ref{rem_F}, the mollified landscape $F$ satisfies the assumptions of Theorem 5 in \cite{freidlin1988random}. 

(i) and (ii) follow from the theory presented in subsection \ref{subsec:dev}.

Let 
\[
\Phi\left[u,v\right]=\int_{\mathcal{O}}L\left(u,v\right)dx,
\]
where $L=\frac{1}{2}\left(d_{1}\left|\partial u/\partial x\right|^{2}+d_{2}\left|\partial v/\partial x\right|^{2}\right)+F\left(u,v\right)$.
Then, we can write the deterministic part of (\ref{eq:sys_epig}),
i.e., $\sigma_{1}=\sigma_{2}=0$, as
\begin{align*}
\frac{\partial u}{\partial t}= & -\frac{\delta\Phi\left(u,v\right)}{\delta u},\\
\frac{\partial v}{\partial t}= & -\frac{\delta\Phi\left(u,v\right)}{\delta v},
\end{align*}
where $\delta$ stands for the variational derivative; this shows
the steepest descent (gradient flow) feature of the deterministic
system. Note that $\varphi_{0}\left(x\right)\equiv\hat{\varphi}^{\left(k\right)}$,
$k\in\left\{ 1,\ldots,4\right\} $ is an equilibrium point of the
deterministic epigenetic system. Moreover, $\det\left(\mathbf{H}\left(F\right)\left(\hat{\varphi}^{k}\right)\right)=4a_{k}^{2}>0$
, $k=1,\ldots4$; it follows that $\varphi_{0}$ is asymptotically
stable. On the other hand, Corollary 14.8 (b) in \cite{smoller2012shock}
ensures that the phenotype (or cellular type) regions associated to
each $\hat{\varphi}^{\left(k\right)}$ are invariant.

Since our field is potential, i.e., $\tilde{f}=-\partial F_{\varepsilon}\left(u,v\right)/\partial u$,
$\tilde{g}=-\partial F_{\varepsilon}\left(u,v\right)/\partial v$, then the field
$B\left(\varphi\right)$ is also potential (see Section 3 of \cite{freidlin1988random}):
\[
B\left(\varphi\right)=-\nabla U\left(\varphi\right).
\]
Note that $U\left(\varphi\right)$ is finite on $H^{1}$. Following
the ideas of Lemma 3 in \cite{freidlin1988random} we can show that
if $U\left(\varphi\right)$ is extended onto $C\left(\mathbb{T};\mathbb{R}^{2}\right)$
with $U\left(\varphi\right)=+\infty$ for $\varphi\in C\left(\mathbb{T};\mathbb{R}^{2}\right)\backslash H^{1}$,
then $U\left(\varphi\right)$ is lower semicontinuous in $C\left(\mathbb{T};\mathbb{R}^{2}\right)$.
Moreover, for any $a\in\left(0,\infty\right)$, the set $\Phi_{a}=\left\{ \varphi\in C\left(\mathbb{T};\mathbb{R}^{2}\right)\textrm{ }:\textrm{ }U\left(\varphi\right)\leq a\right\} $
is compact in $C\left(\mathbb{T};\mathbb{R}^{2}\right)$ (Arzela-Ascoli
theorem). Hence, $U\left(\varphi\right)$ is a regular functional
and (iii) follows from Proposition \ref{prop:Exit_time}.
\end{proof}
\begin{proof}[Proof of Theorem \ref{thm:Thm_control1}] Both $\tilde{f}$ and $\tilde{g}$
are smooth with bounded derivatives of all orders since we are considering
the mollified version of $F$. The corresponding (autonomous) Nemytskii
operators on $H^{1}$ are Lipschitz and continuously (Fr{\'e}chet) differentiable;
see Theorem 1.4 \cite{efendiev2023linear}. The result is also valid
if we consider $H^{k}$, $k>1/2$; see \cite{ambrosetti1995primer,appell1981implicit,appell1990nonlinear}
for details. The $L^{2}$ case is more complicated, and the associated
Nemytskii operators are only Gateaux differentiable; see Theorem 2.7 \cite{ambrosetti1995primer}. Then, (i) and (ii) follow if we consider
in addition Assumptions \ref{assu:S1_p}-\ref{assu:S5_p}; see Subsection
\ref{subsec:Control}.
\end{proof}
\begin{proof}[Proof of Theorem \ref{thm:Thm_control2}] We proceed similarly to
the proof of Theorem \ref{thm:Thm_control1} for the regularity of
$\tilde{f}$ and $\tilde{g}$. Then, (i) and (ii) follow if we consider
in addition Assumptions \ref{assu:As1_p}-\ref{assu:As4}. 
\end{proof}

\section{Numerical Simulations\label{sec:Numerical_simulations}}

We now present some numerical simulations showing the dynamics of
our stochastic system. In particular, the exit from a basin of attraction
and the evolution afterward. Our results are based on the ideas presented
in Chapter 10 of \cite{lord2014introduction}.

We use the finite difference method for our stochastic epigenetic
system with homogeneous Neumann boundary conditions on $\mathcal{O}=\left(0,1\right)$.
Set $\sigma_{1}=\sigma_{2}=\sigma>0$ and let $W^{1}\left(t\right)$
and $W^{2}\left(t\right)$ be two independent $Q-$Wiener processes
on $L^{2}\left(\mathcal{O}\right)$ with kernel $q\left(x,y\right)=\exp\left(-\left|x-y\right|/l\right)$
for a correlation length $l>0$; the white noise case is more difficult
to handle, and we will avoid it for our illustrative purposes.

Consider the grid points $x_{j}=jh$ for $h=1/J$ and $j=0,\ldots,J$.
Let $\boldsymbol{u}_{J}\left(t\right)$ and $\boldsymbol{v}_{J}\left(t\right)$
be the finite difference approximations to $\left[u\left(t,x_{1}\right),\ldots,u\left(t,x_{J-1}\right)\right]^{\mathrm{T}}$
and $\left[v\left(t,x_{1}\right),\ldots,v\left(t,x_{J-1}\right)\right]^{\mathrm{T}}$,
respectively, resulting from the centered difference approximation
$A^{N}$ of the (negative of the) Laplacian (see eq. (3.51) in \cite{lord2014introduction}).
That is, $\boldsymbol{u}_{J}\left(t\right)$ and $\boldsymbol{v}_{J}\left(t\right)$
are the solution of 
\begin{align*}
d\boldsymbol{u}_{J}= & \left[-d_{1}A^{N}\boldsymbol{u}_{J}+\tilde{f}\left(\boldsymbol{u}_{J},\boldsymbol{v}_{J}\right)\right]dt+\sigma\boldsymbol{W}_{J}^{1}\left(t\right),\\
d\boldsymbol{v}_{J}= & \left[-d_{2}A^{N}\boldsymbol{v}_{J}+\tilde{g}\left(\boldsymbol{u}_{J},\boldsymbol{v}_{J}\right)\right]dt+\sigma\boldsymbol{W}_{J}^{2}\left(t\right),
\end{align*}
with $\boldsymbol{u}_{J}\left(0\right)=\left[u_{0}\left(x_{1}\right),\ldots,u_{0}\left(x_{J-1}\right)\right]^{\mathrm{T}}$,
$\boldsymbol{v}_{J}\left(0\right)=\left[v_{0}\left(x_{1}\right),\ldots,v_{0}\left(x_{J-1}\right)\right]^{\mathrm{T}}$,
and $\boldsymbol{W}_{J}^{k}=\left[W^{k}\left(t,x_{1}\right),\ldots,W^{k}\left(t,x_{J-1}\right)\right]^{\mathrm{T}}$,
$k=1,2.$ To discretize in time, we apply the semi-implicit Euler-Maruyama
method (see eq. (8.121) in \cite{lord2014introduction}) with time
step $\triangle t>0$, which gives the approximations $\boldsymbol{u}_{J,n}$
to $\boldsymbol{u}_{J}\left(t_{n}\right)$ and $\boldsymbol{v}_{J,n}$
to $\boldsymbol{v}_{J}\left(t_{n}\right)$ at $t_{n}=n\triangle t$
defined by
\begin{align*}
\boldsymbol{u}_{J,n+1}= & \left(I+\triangle t\textrm{ }d_{1}A^{N}\right)^{-1}\left[\boldsymbol{u}_{J,n}+\tilde{f}\left(\boldsymbol{u}_{J,n},\boldsymbol{v}_{J,n}\right)\triangle t+\sigma\triangle\boldsymbol{W}_{n}^{1}\right],\\
\boldsymbol{v}_{J,n+1}= & \left(I+\triangle t\textrm{ }d_{2}A^{N}\right)^{-1}\left[\boldsymbol{v}_{J,n}+\tilde{g}\left(\boldsymbol{u}_{J,n},\boldsymbol{v}_{J,n}\right)\triangle t+\sigma\triangle\boldsymbol{W}_{n}^{2}\right],
\end{align*}
with $\boldsymbol{u}_{J,0}=\boldsymbol{u}_{J}\left(0\right)$, $\boldsymbol{v}_{J,0}=\boldsymbol{v}_{J}\left(0\right)$,
and $\triangle\boldsymbol{W}_{n}^{k}=\boldsymbol{W}_{J}^{k}\left(t_{n+1}\right)-\boldsymbol{W}_{J}^{k}\left(t_{n}\right)$,
$k=1,2$. Furthermore, $\boldsymbol{W}_{J}^{k}\left(t\right)\sim N\left(\mathbf{0},tC\right)$,
where $C$ is the matrix with entries $q\left(x_{i},x_{j}\right)$
for $i,j=1,\ldots,J-1$. We use the circulant embedding method to
generate the increments $\triangle\boldsymbol{W}_{n}^{k}$ (see Algorithms
6.9 and 10.7 in \cite{lord2014introduction}). For the stability and
convergence analysis of this method, see Chapter 10 in \cite{lord2014introduction}.

We use Python for our implementation and the function gaussian\_filter
of scipy.ndimage to smooth out the epigenetic potential (\ref{eq:Pot_F}).
Fig. (\ref{fig:Color_organs}) shows the colors corresponding to the
four organs in the flower Arabidopsis thaliana. Fig (\ref{fig:Exit_1})
shows the epigenetic landscape and the system's evolution (sample
path) for different values of $t$ (see also the animation in the
online version); the white dot at $t=0.00$ corresponds to the initial
condition. We compute the $L^{2}-$average (Avg) of the sample path
at each time:
\begin{equation}
Avg\textrm{ }u\left(t\right)=\sqrt{\frac{1}{\left|\mathcal{O}\right|}\int_{\mathcal{O}}\left|u\left(t,x\right)\right|^{2}dx},\textrm{ }Avg\textrm{ }v\left(t\right)=\sqrt{\frac{1}{\left|\mathcal{O}\right|}\int_{\mathcal{O}}\left|v\left(t,x\right)\right|^{2}dx},\label{eq:avg}
\end{equation}
where $\left|\mathcal{O}\right|$ is the length of $\mathcal{O}$. We use the trapezoidal method for the integrals and the initial conditions
are constant functions.

For mean escape times, the leading-order exponential scaling is determined by the Freidlin--Wentzell quasipotential. A more accurate asymptotic approximation is obtained by incorporating the Eyring--Kramers prefactor, which accounts for fluctuations about the stable equilibrium and the relevant saddle point. Following the infinite-dimensional Eyring--Kramers framework of \cite{brooks2026eyring}, this prefactor is expressed as the ratio of the corresponding Fredholm determinants. For the present system, we obtain $C_{\text{EK}} \approx 485$.

We computed the approximate Eyring--Kramers prefactor $C_{\text{EK}}$ as follows. The Gaussian-smoothed potential $F_{\varepsilon}$ was generated numerically using the same Gaussian filter employed in the stochastic simulations. The metastable minimum and the relevant saddle separating the first two basins were located numerically. Their Hessian matrices were approximated using second-order centered finite differences, from which the determinants and the unique negative eigenvalue at the saddle were computed. Substituting these quantities into the finite-dimensional Eyring--Kramers formula
\[
C_{\text{EK}} = \frac{2\pi}{|\lambda_{-}|} \sqrt{\frac{|\det H_{s}|}{|\det H_{m}|}}.
\]
yields $C_{\text{EK}} \approx 485$. Since our stochastic model is an SPDE, this value should be interpreted as the prefactor associated with the finite-dimensional Gaussian-smoothed landscape used in the numerical simulations rather than the full infinite-dimensional SPDE prefactor. Incorporating this prefactor substantially improves the agreement between the asymptotic prediction and the numerical escape times reported in Table~\ref{tab1}.

\begin{table}[h]
\centering
\caption{Escape time estimates from sepals to petals for different $\sigma$ values, using Freidlin--Wentzell theory and Eyring--Kramers law. The fourth column shows the corresponding numerically observed values from the simulations in Figs.~\ref{fig:Trans_sep} and \ref{fig:Exit_1}.}\label{tab1}
\begin{tabular}{l r r r}
\toprule
$\sigma$ & Freidlin--Wentzell & Eyring--Kramers & Numerical simulations \\
\midrule
0.030    &   3.300           & $1.60 \times 10^3$ & $1.4 \times 10^3$   \\
0.020    &  14.675           & $7.12 \times 10^3$ & $6.4 \times 10^3$   \\
0.014    & 240.380           & $1.17 \times 10^5$ & $1.1 \times 10^5$   \\
\bottomrule
\end{tabular}
\end{table} 

The observed escape times are in close agreement with the Eyring--Kramers approximation and substantially improve upon the leading-order Freidlin--Wentzell estimate, illustrating the importance of the geometric prefactor for moderate noise intensities.

Fig (\ref{fig:Exit_1}) shows that, due to the effect of the noise,
the system eventually exits the local minima, traversing the epigenetic
landscape in the spatial order that corresponds to the correct architecture
of the flower, that is, following the observed geometrical features
of the meristem ($sepals\rightarrow petals\rightarrow stamens\rightarrow carpels$).
We remark that both the depths and the locations of the minima of
the basins of attraction play a crucial role in describing the correct
epigenetic dynamics of a biological system such as Arabidopsis thaliana.
To see this, Fig \ref{fig:Exit_2} displays the system's evolution
for a different arrangement of the epigenetic potential, which is
not the one expected from observed mutations of Arabidopsis thaliana.
Furthermore, these numerical simulations show the significance of
our rigorous results and how they can help to study other biological
systems (see Theorem \ref{thm:Thm_large_dev}).

Figs (\ref{fig:Dist1})-(\ref{fig:Dist2}) show the distribution of
Avg $u\left(t\right)$ and Avg $v\left(t\right)$ (see (\ref{eq:avg}))
of a sample path up to time $t=T$ with the same initial condition
as in Fig (\ref{fig:Exit_1}) and for different values of $\sigma$
(see also the animations in the online version). Notice that, as the
value of $\sigma$ is halved, so is the standard deviation of the
distribution. Moreover, the mean of the distributions is equal to
the (Avg of the) minimum of the basin of attraction (sepals in this
case). Both observations agree with our results in Theorem \ref{thm:Thm_large_dev}.

\begin{figure}[H]
\begin{centering}
\includegraphics[scale=0.6]{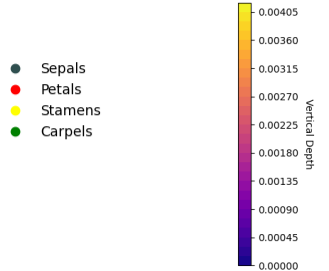}
\par\end{centering}
\caption{\label{fig:Color_organs}Colors corresponding to the four organs in
the flower Arabidopsis thaliana (left) and vertical depth in the epigenetic (Waddington) landscape (right) for our different contour plots.}

\end{figure}

\begin{figure}[H]
\begin{centering}
\includegraphics[scale=0.6]{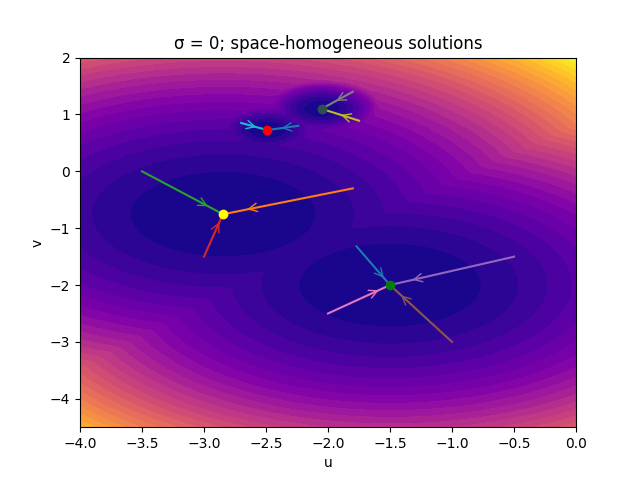}
\par\end{centering}
\caption{\label{fig:sigma_zero}In deterministic dynamics ($\sigma=0$), each phenotype region coincides with an invariant neighborhood of one minimum of the epigenetic landscape. These regions lie within the corresponding basin of attraction, and every deterministic trajectory that starts inside remains there for all positive time. The figure shows the steepest-descent trajectory used by our numerical method to the nearest local minimum, where the solution remains, illustrating the expected behavior of the underlying gradient flow. }

\end{figure}

\begin{figure}[H]
\begin{centering}
\begin{tabular}{cc}
\includegraphics[scale=0.18]{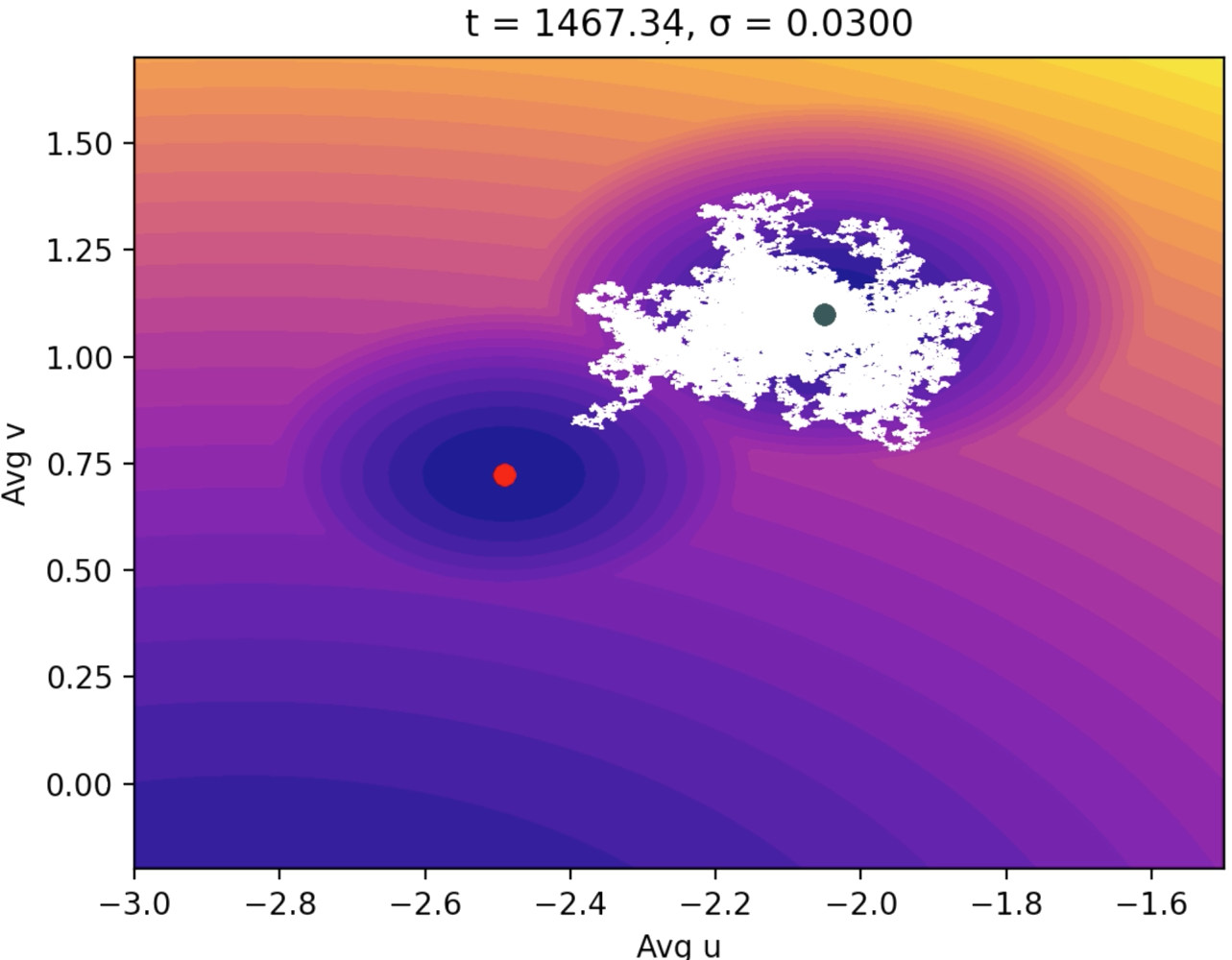} & \includegraphics[scale=0.18]{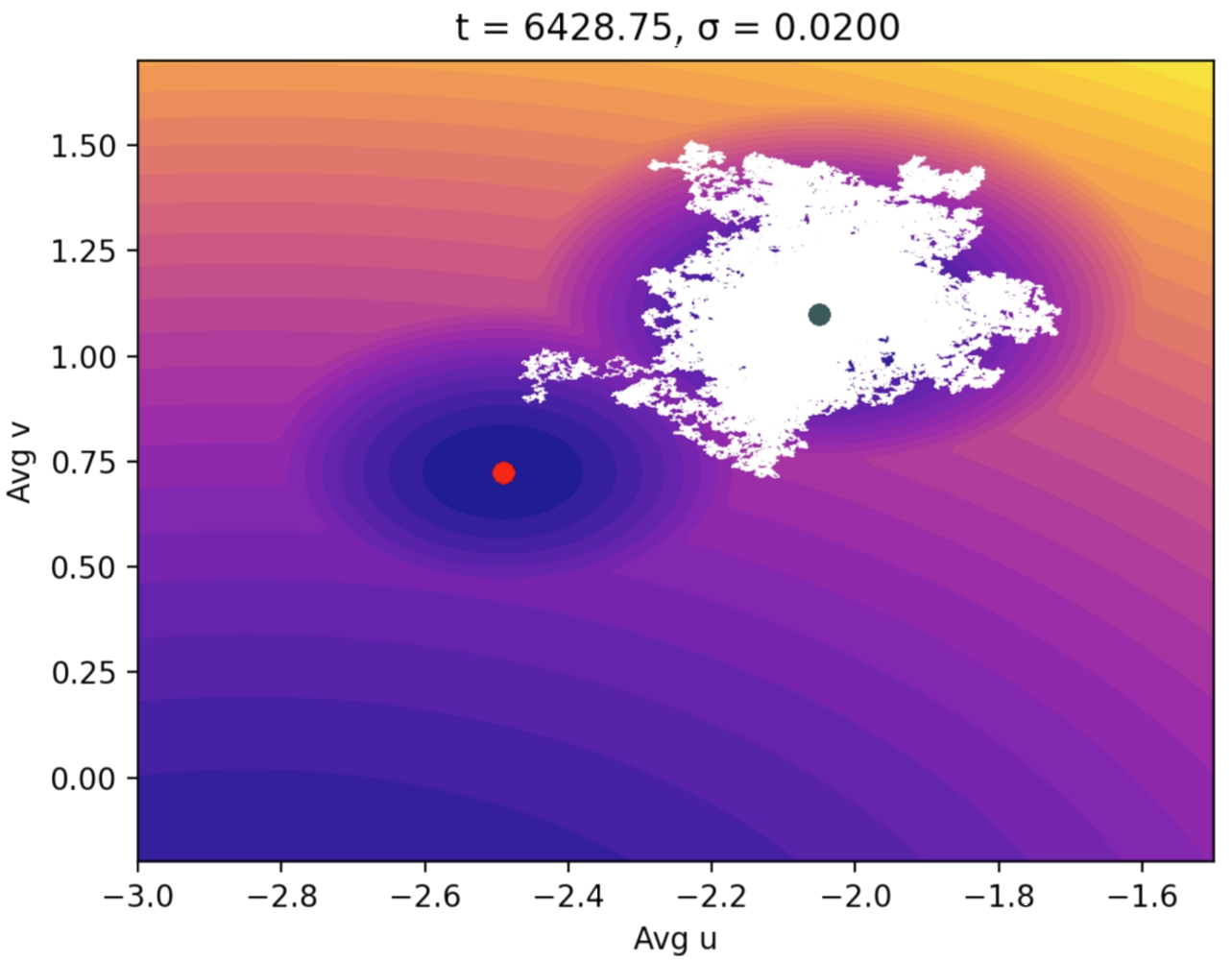}\tabularnewline
\end{tabular}
\par\end{centering}
\caption{Transitions (escape) from sepals to petals for different $\sigma$ values.\label{fig:Trans_sep}}

\end{figure}

\begin{figure}[H]
\begin{centering}
\begin{tabular}{cc}
\includegraphics[scale=0.14]{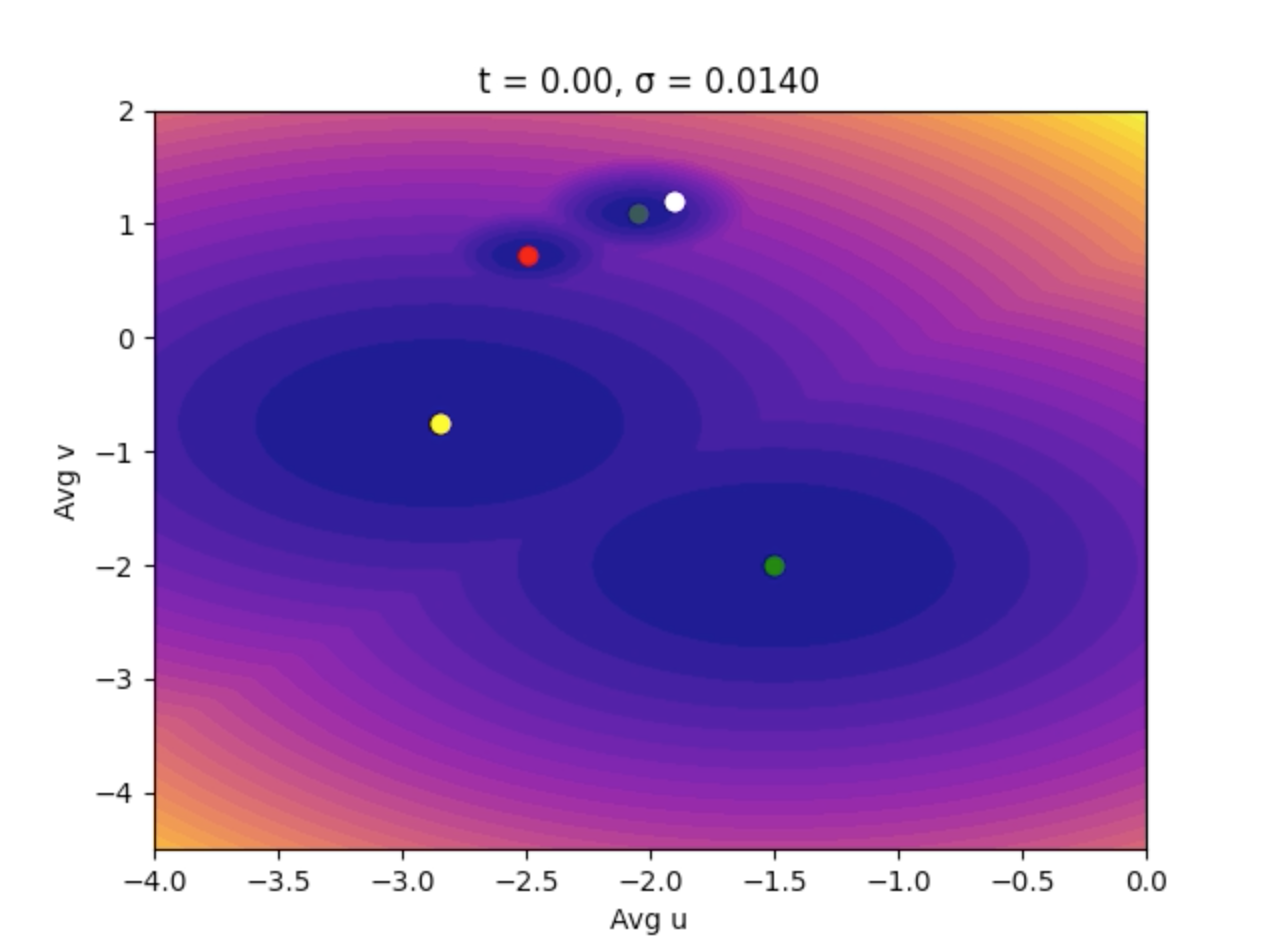} & \includegraphics[scale=0.14]{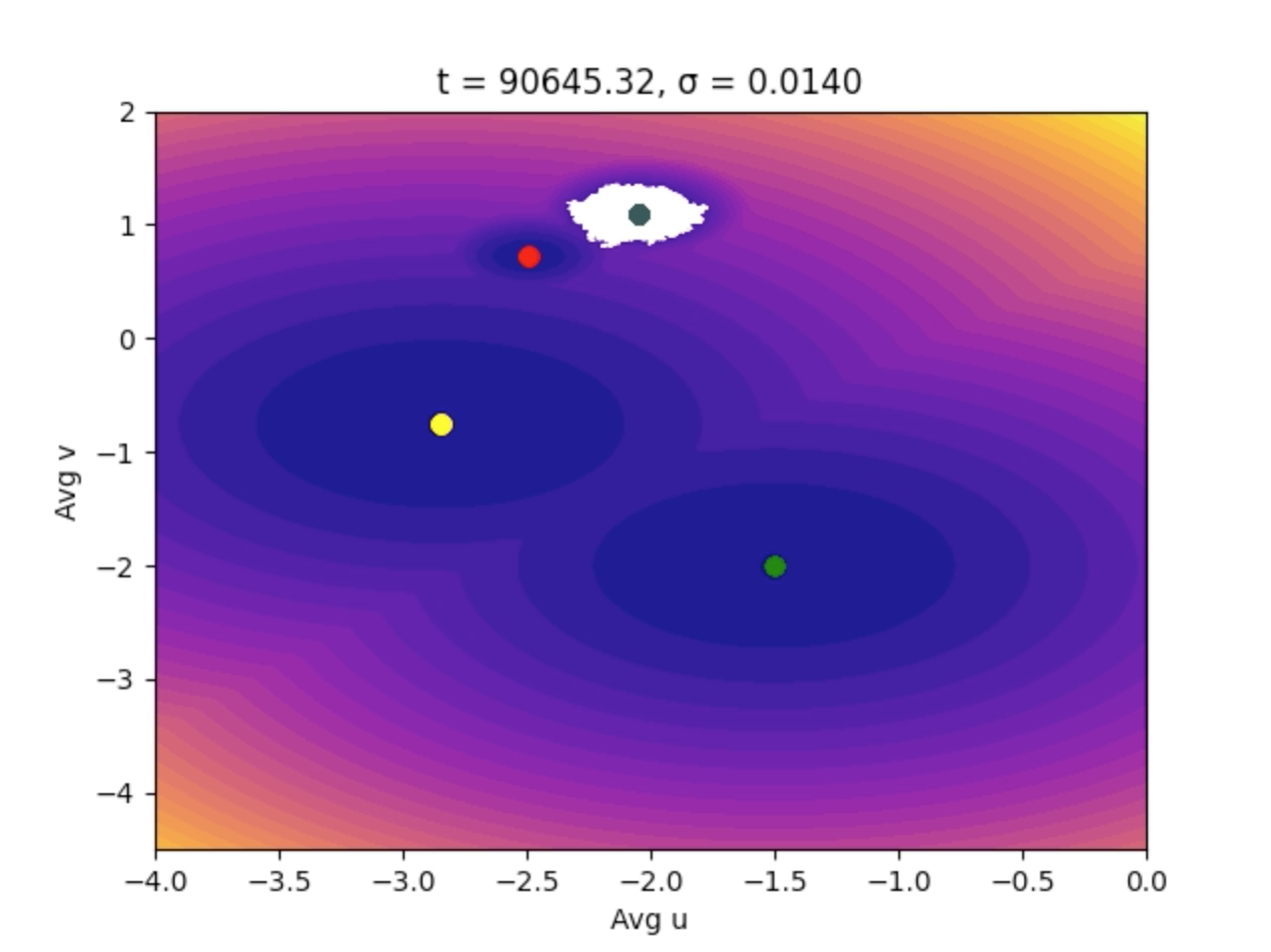}\tabularnewline
\includegraphics[scale=0.14]{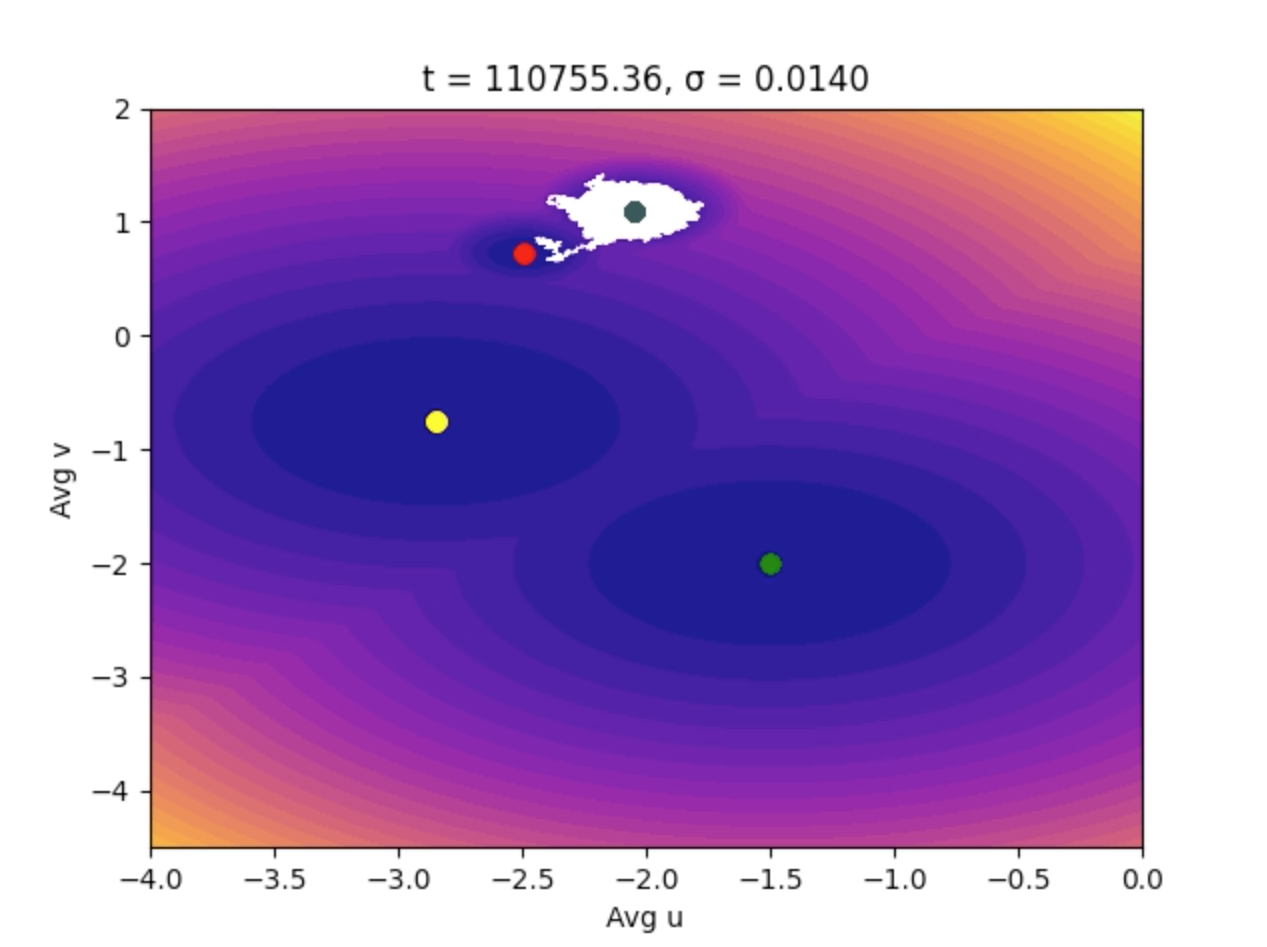} & \includegraphics[scale=0.14]{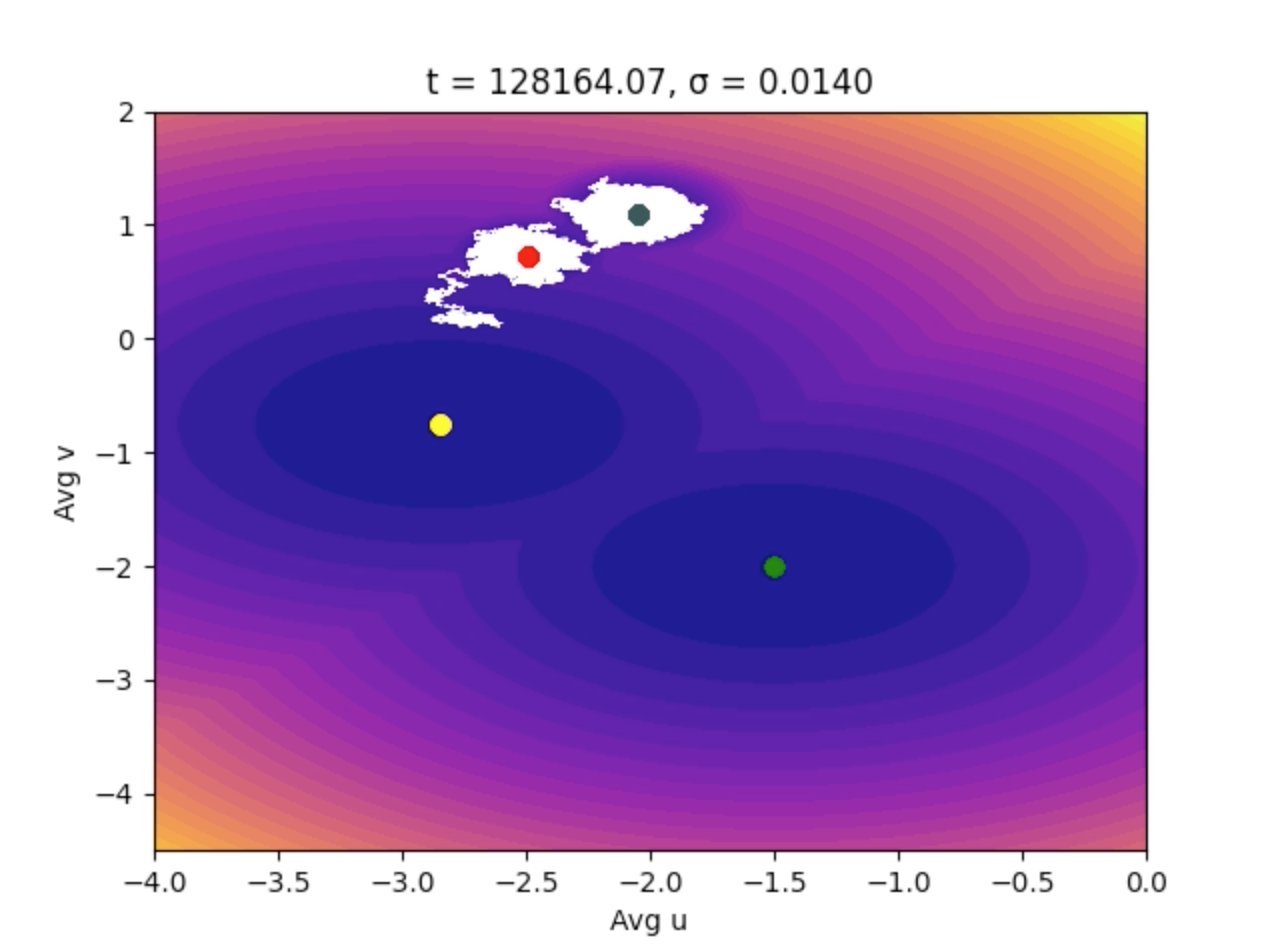}\tabularnewline
\includegraphics[scale=0.14]{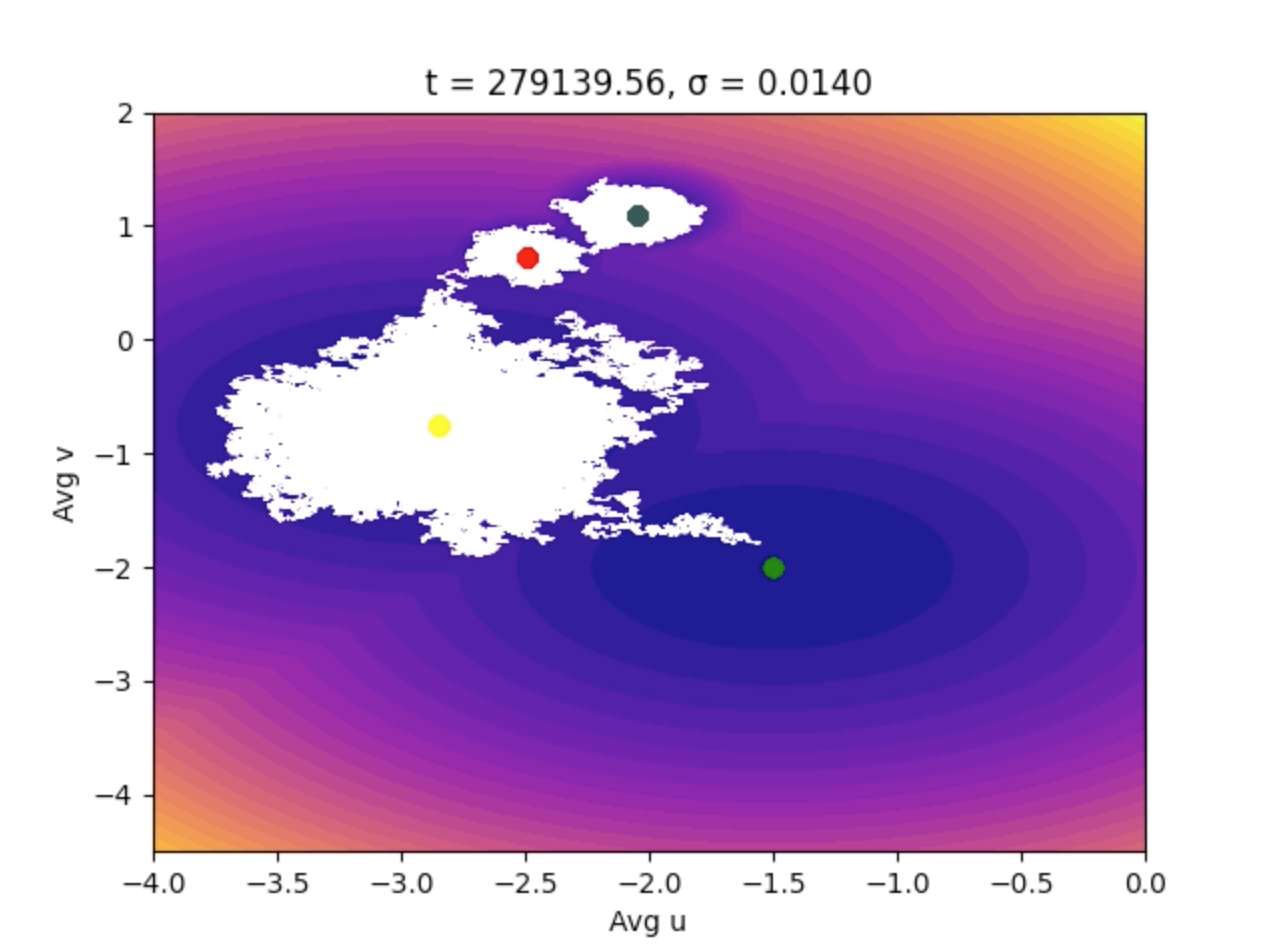} & \includegraphics[scale=0.14]{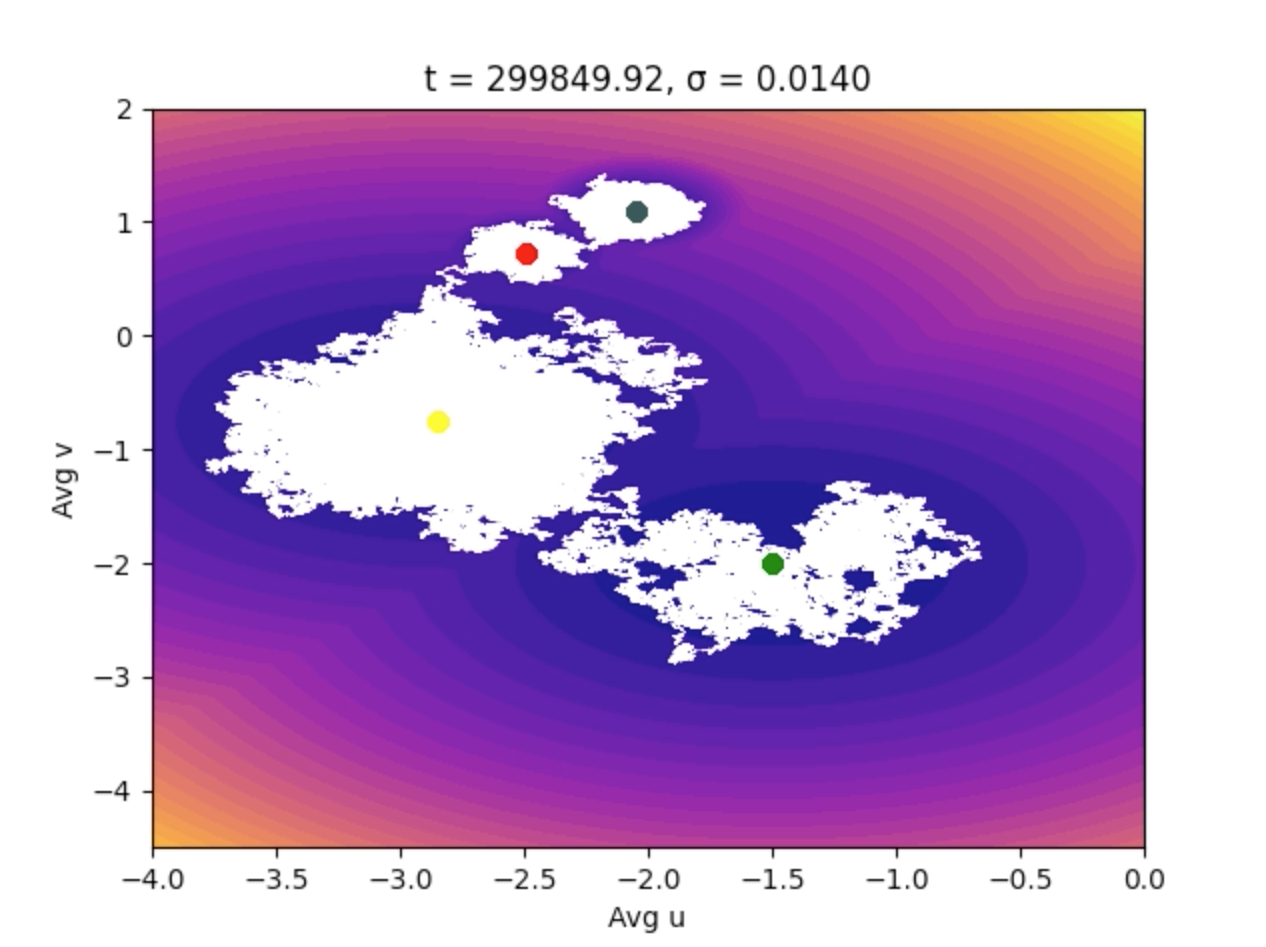}\tabularnewline
\end{tabular}
\par\end{centering}
\caption{Epigenetic landscape and system's evolution (sample path) for different
values of $t$ with $\sigma=0.0140$ and $d_{1}=d_{2}=1$. The white
dot at $t=0.00$ corresponds to the initial condition and Avg is given
by (\ref{eq:avg}).\label{fig:Exit_1}}

\end{figure}

\begin{figure}[H]
\begin{centering}
\begin{tabular}{cc}
\includegraphics[scale=0.14]{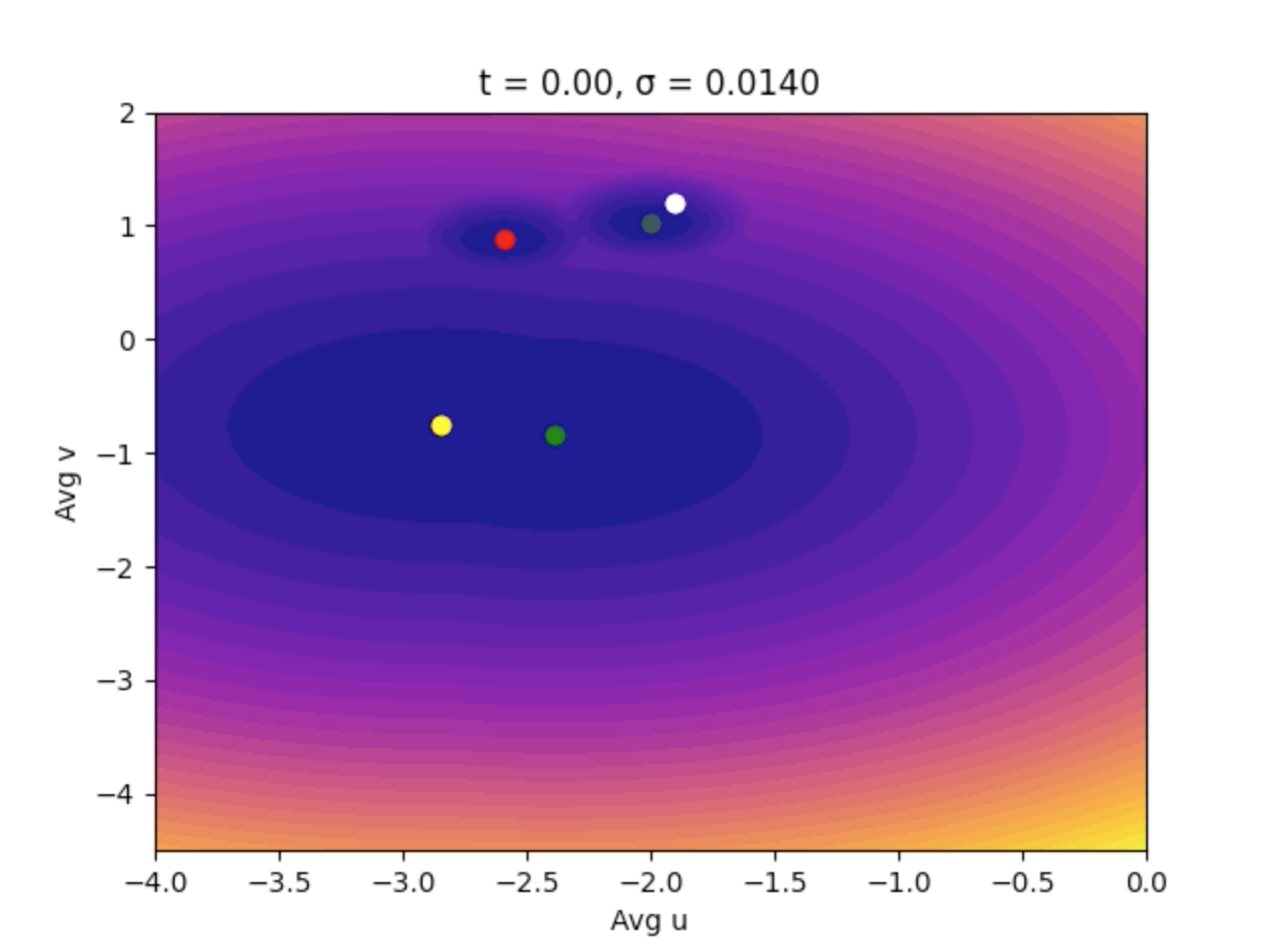} & \includegraphics[scale=0.14]{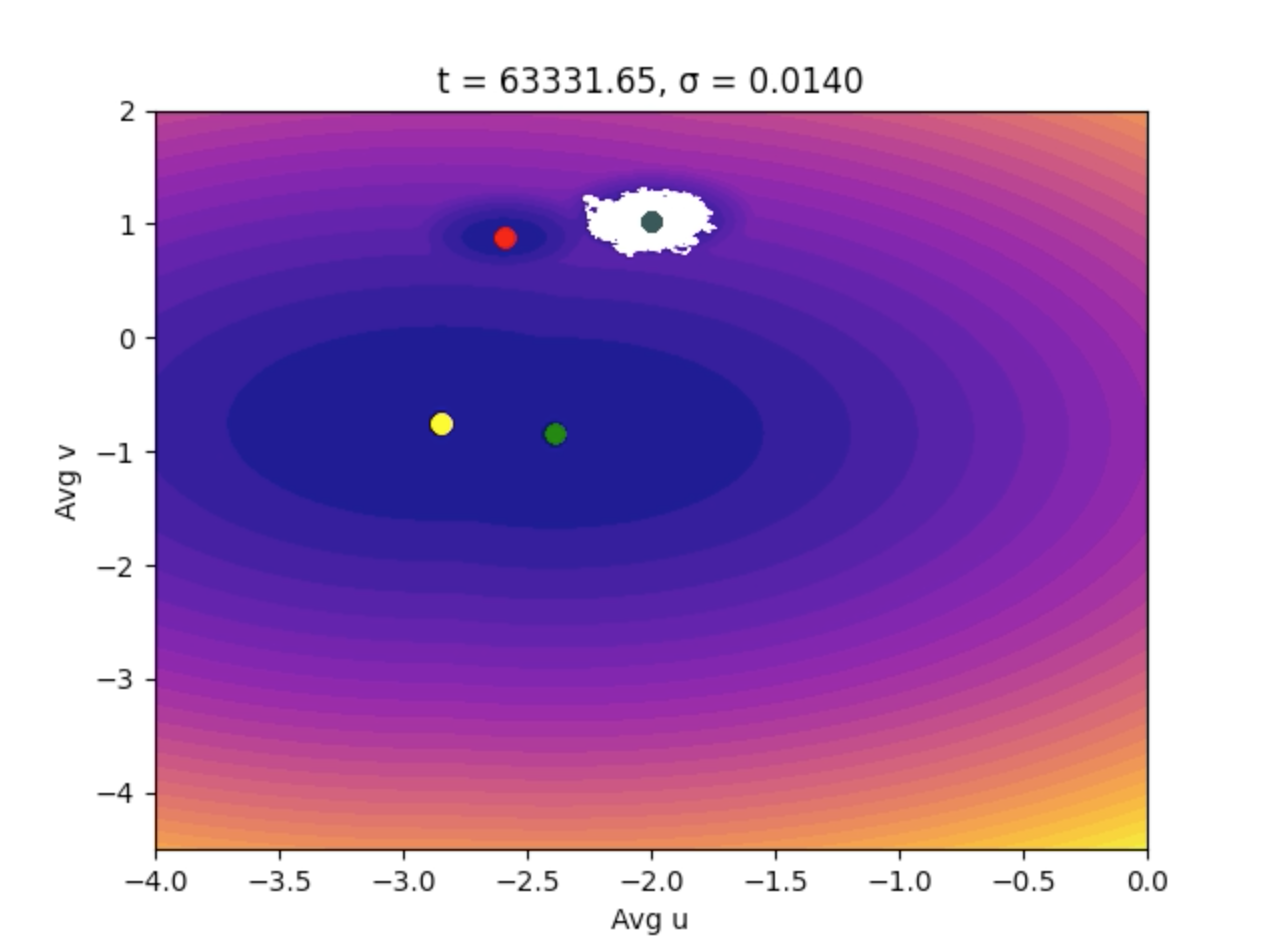}\tabularnewline
\includegraphics[scale=0.14]{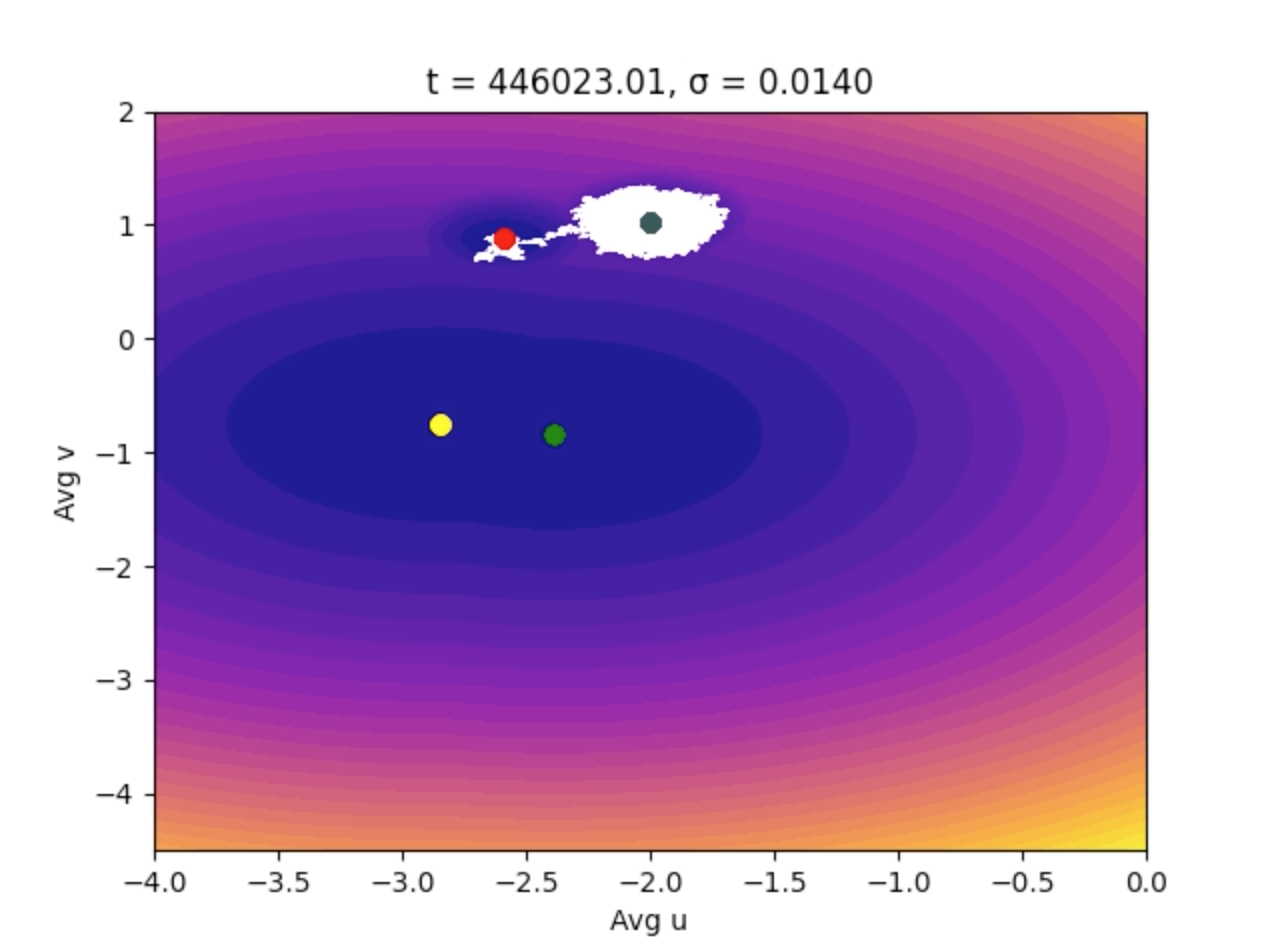} & \includegraphics[scale=0.14]{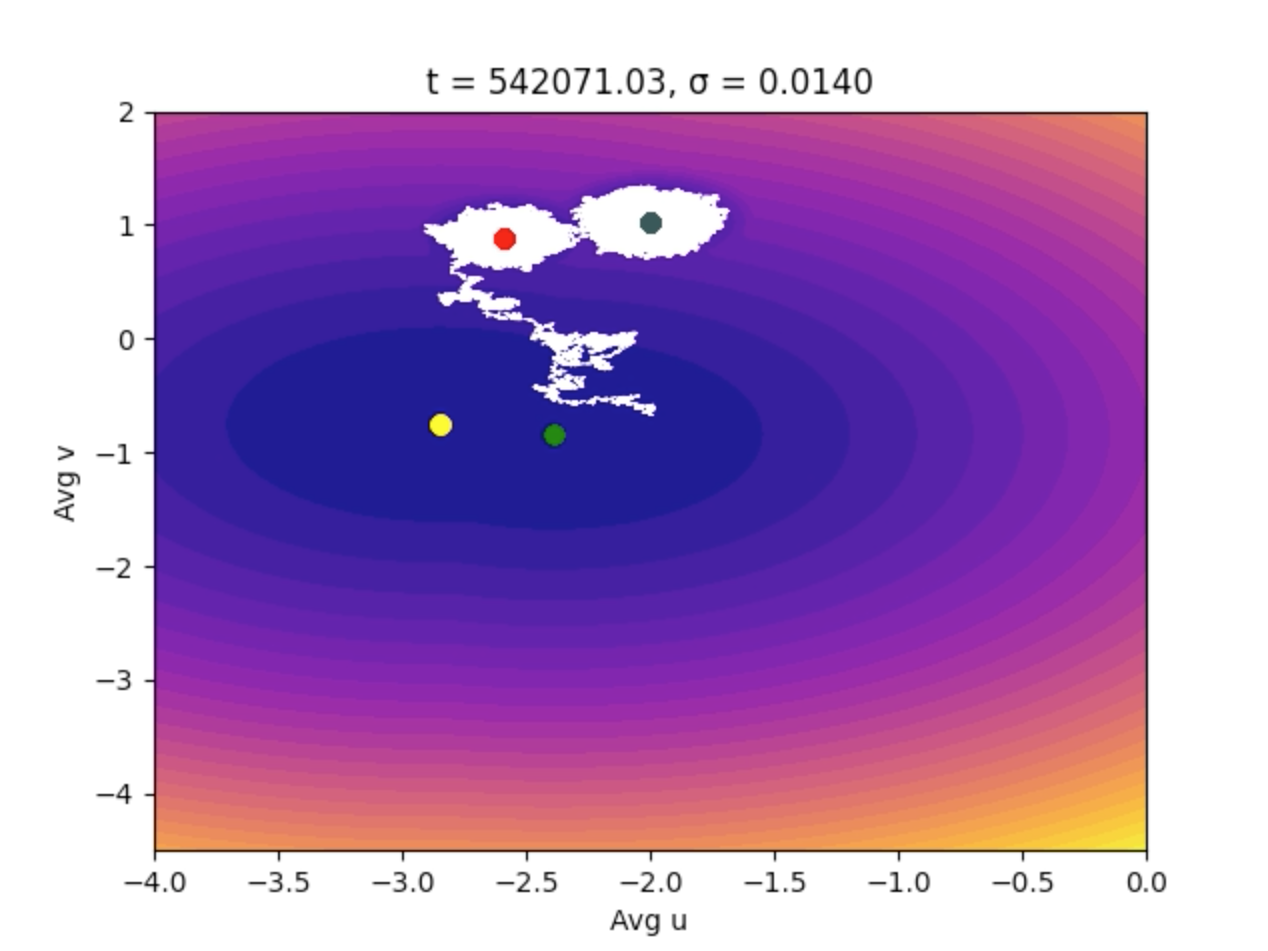}\tabularnewline
\includegraphics[scale=0.14]{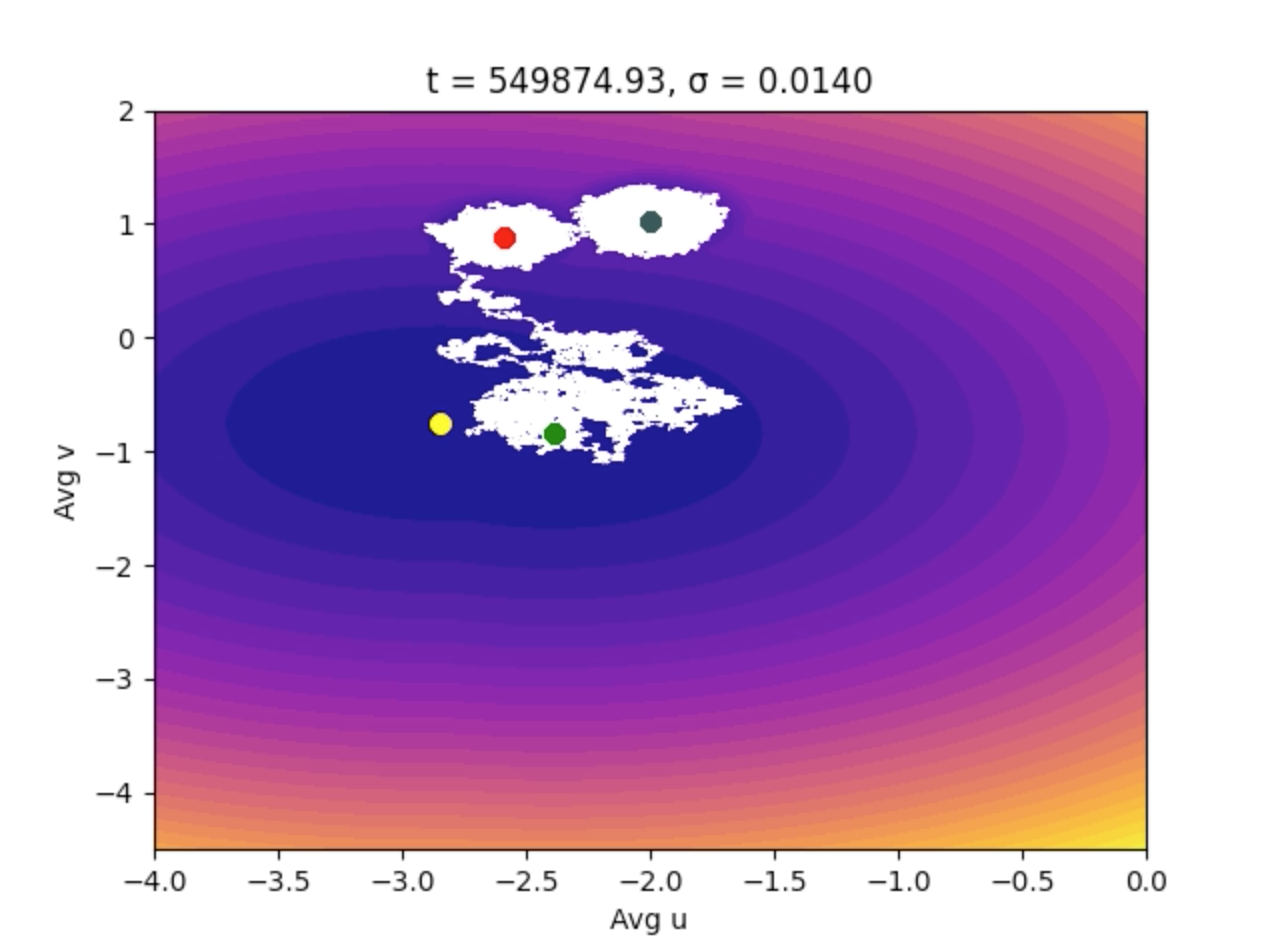} & \includegraphics[scale=0.14]{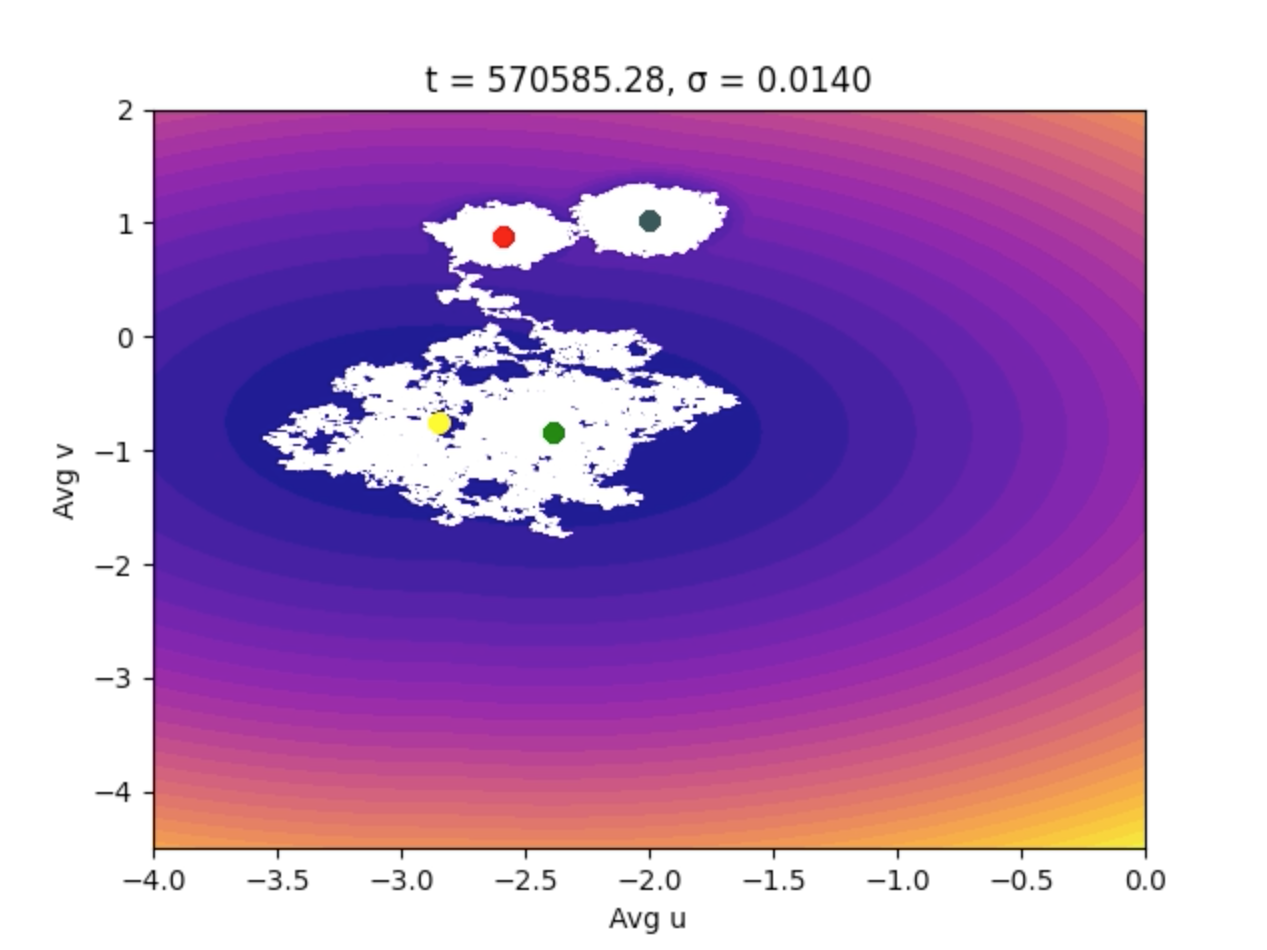}\tabularnewline
\end{tabular}
\par\end{centering}
\caption{Epigenetic landscape (different configuration; cf. Fig (\ref{fig:Exit_1}))
and system's evolution (sample path) for different values of $t$
with $\sigma=0.0140$ and $d_{1}=d_{2}=1$. The white dot at $t=0.00$
corresponds to the initial condition and Avg is given by (\ref{eq:avg})\label{fig:Exit_2}}
\end{figure}

\begin{figure}[H]
\begin{centering}
\begin{tabular}{c}
\includegraphics[scale=0.165]{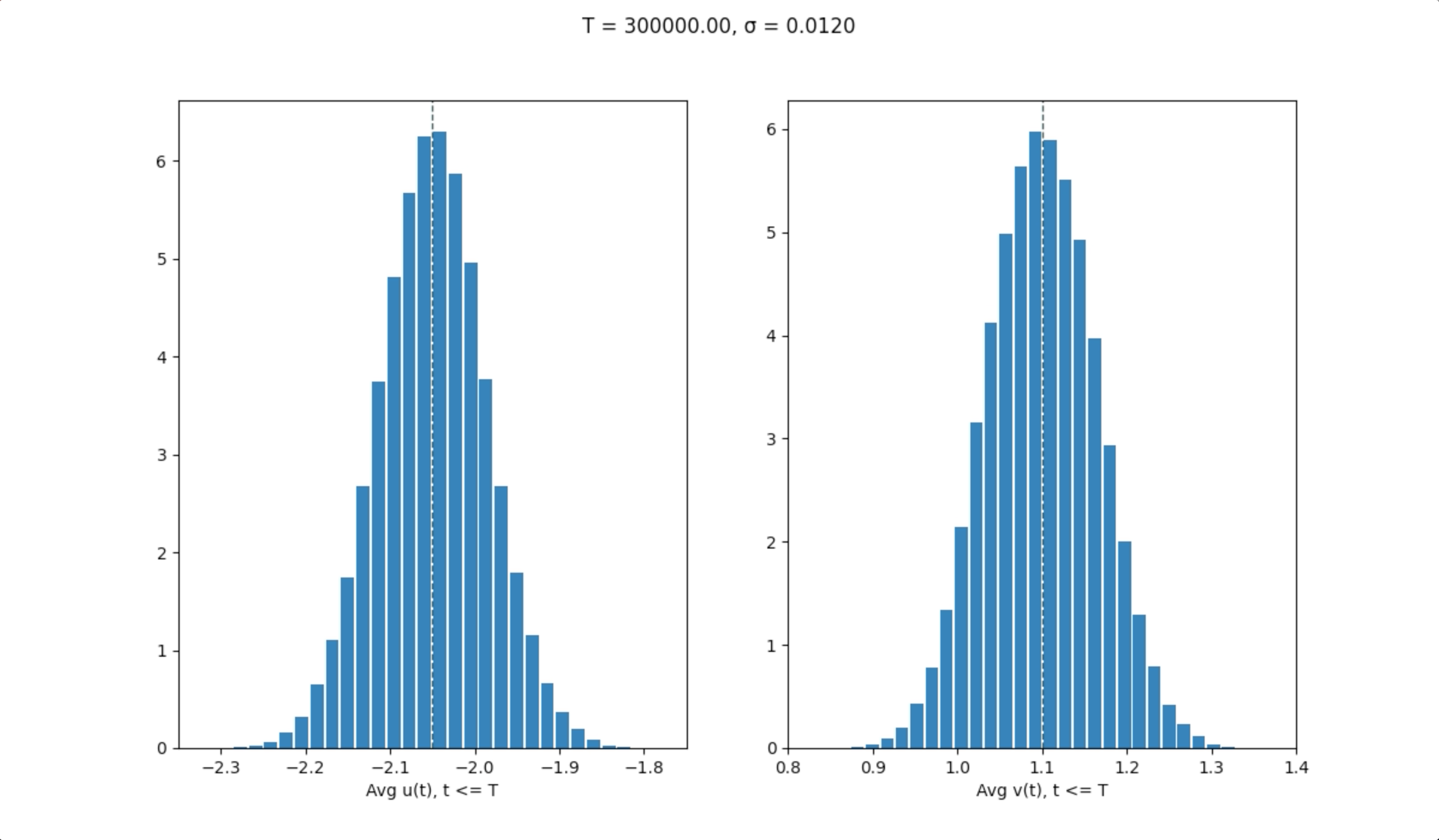}\tabularnewline
\includegraphics[scale=0.165]{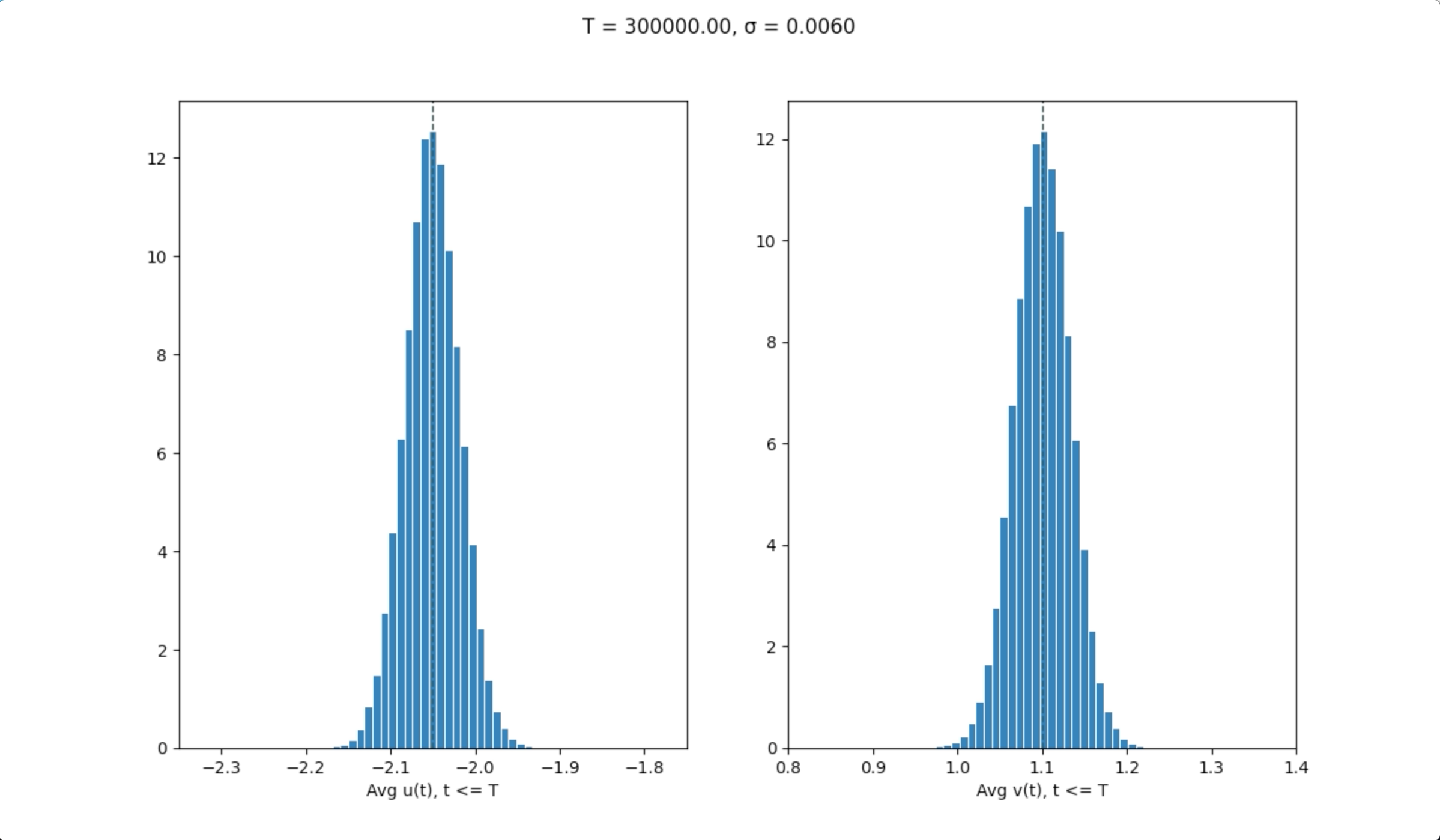}\tabularnewline
\end{tabular}
\par\end{centering}
\caption{\label{fig:Dist1}Distribution of Avg $u\left(t\right)$ and Avg $v\left(t\right)$
(see (\ref{eq:avg})) of a sample path up to time $t=T$ with the
same initial condition as in Fig (\ref{fig:Exit_1}) and for $\sigma=0.0120$
(top), $\sigma=0.0060$ (bottom).}

\end{figure}

\begin{figure}[H]
\begin{centering}
\begin{tabular}{c}
\includegraphics[scale=0.165]{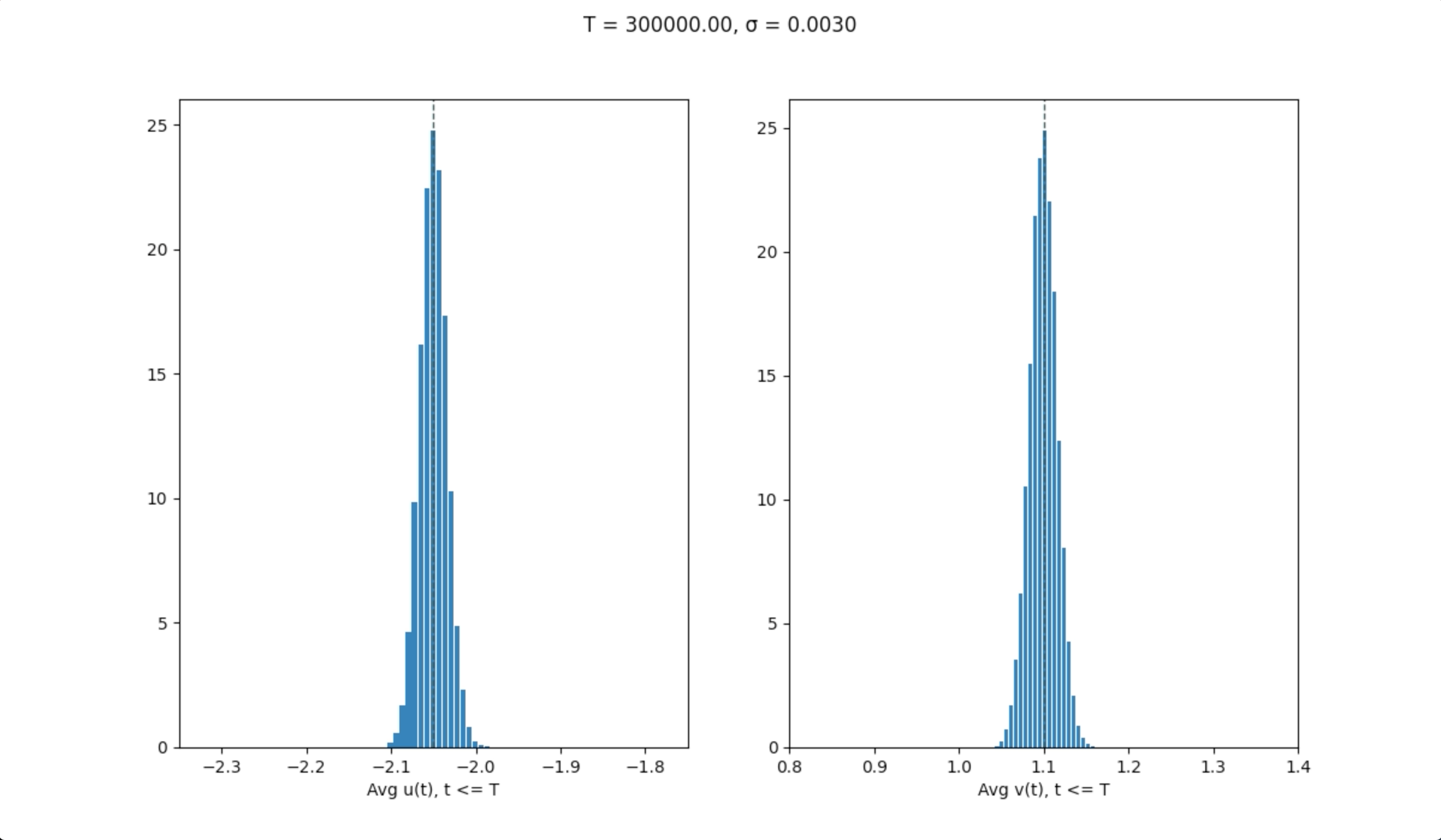}\tabularnewline
\includegraphics[scale=0.165]{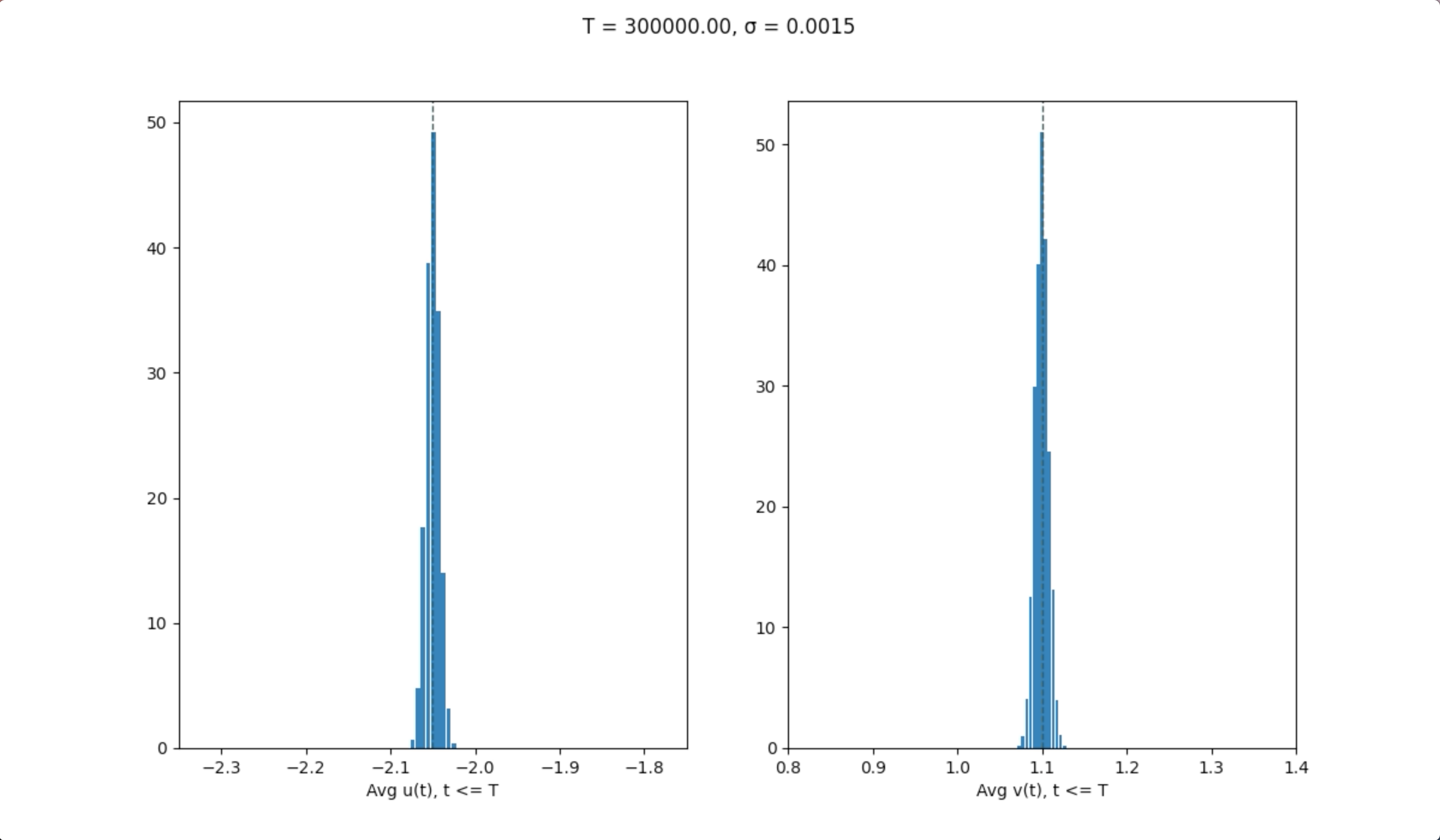}\tabularnewline
\end{tabular}
\par\end{centering}
\caption{\label{fig:Dist2}Distribution of Avg $u\left(t\right)$ and Avg $v\left(t\right)$
(see (\ref{eq:avg})) of a sample path up to time $t=T$ with the
same initial condition as in Fig (\ref{fig:Exit_1}) and for $\sigma=0.0030$
(top), $\sigma=0.0015$ (bottom).}
\end{figure}

\section{Conclusions and Future Work}
We have presented a stochastic model for epigenetic dynamics. Our main goal was to assemble the rigorous theory needed to support our observations, which show how drastically noise and random perturbations may alter the system's behavior. We believe our stochastic model should be coupled with a stochastic differential mean-field game system. In this case, characterizing the long-time behavior, together with the (Malliavin/classical) regularity of solutions, is essential, particularly for numerics, as we do in \cite{sanchez2026regularity}. For many biological systems, it may also be more convenient to consider colored noise, particularly because of correlations. This can make numerical computations easier to handle, and some mathematical results may be easier to obtain, especially in higher dimensions for SPDE. However, colored noise also introduces additional technicalities when applying Large Deviation Theory (LDT). For escape time estimates, more accurate numerical methods should be explored, along with Eyring-Kramers law.

\section{Statements and Declarations}
\subsection{Competing Interests} The authors declare that no competing interests exist.
\subsection{Funding Declaration} No financial support was received for this work.

\begin{appendices}

\renewcommand{\theassumption}{A\arabic{assumption}}
\renewcommand{\theHassumption}{APP.\arabic{assumption}}
\setcounter{assumption}{0} 

\section{Technical Assumptions for the Stochastic Control Framework}\label{sec:appendixA}

\begin{assumption}
\label{assu:S1}Let $a\left(\cdot,\cdot,\cdot\right):\left[0,T\right]\times\Omega\times H\times U\rightarrow H$
and $b\left(\cdot,\cdot,\cdot\right):\left[0,T\right]\times\Omega\times H\times U\rightarrow\mathcal{L}_{2}^{0}$
satisfy:
\begin{enumerate}
\item For any $\left(x,\alpha\right)\in H\times U$, the functions $a\left(\cdot,x,\alpha\right):\left[0,T\right]\times\Omega\rightarrow H$
and $b\left(\cdot,x,\alpha\right)\rightarrow\mathcal{L}_{2}^{0}$
are $\mathbb{F}-$measurable,
\item For any $x\in H$ and a.e. $\left(t,\omega\right)\in\left(0,T\right)\times\Omega$,
the functions $a\left(t,x,\cdot\right):U\rightarrow H$ and $b\left(t,x,\cdot\right):U\rightarrow\mathcal{L}_{2}^{0}$
are continuous,
\item For any $\left(x_{1},x_{2},\alpha\right)\in H\times H\times U$ and
a.e. $\left(t,\omega\right)\in\left(0,T\right)\times\Omega$,
\[
\left\{ \begin{array}{l}
\left|a\left(t,x_{1},\alpha\right)-a\left(t,x_{2},\alpha\right)\right|_{H}+\left|b\left(t,x_{1},\alpha\right)-b\left(t,x_{2},\alpha\right)\right|_{\mathcal{L}_{2}^{0}}\leq C\left|x_{1}-x_{2}\right|_{H},\\
\left|a\left(t,0,\alpha\right)\right|_{H}+\left|b\left(t,0,\alpha\right)\right|_{\mathcal{L}_{2}^{0}}\leq C.
\end{array}\right.
\]
\end{enumerate}
\end{assumption}

\begin{assumption}
\label{assu:s5}For a.e. $\left(t,\omega\right)\in\left(0,T\right)\times\Omega$,
the functions $a\left(t,\cdot,\cdot\right):H\times U\rightarrow H$,
$b\left(t,\cdot,\cdot\right):H\times U\rightarrow\mathcal{L}_{2}^{0}$,
$g\left(t,\cdot,\cdot\right):H\times U\rightarrow\mathbb{R}$, and
$h\left(\cdot\right):H\rightarrow\mathbb{R}$ are $C^{1}$. Furthermore,
for any $\left(x,\alpha\right)\in H\times U$ and a.e. $\left(t,\omega\right)\in\left(0,T\right)\times\Omega$,
we have
\[
\left\{ \begin{array}{l}
\left|a_{x}\left(t,x,\alpha\right)\right|_{\mathcal{L}\left(H\right)}+\left|b_{x}\left(t,x,\alpha\right)\right|_{\mathcal{L}\left(H;\mathcal{L}_{2}^{0}\right)}+\left|g_{x}\left(t,x,\alpha\right)\right|_{H}+\left|h_{x}\left(x\right)\right|_{H}\leq C,\\
\left|a_{\alpha}\left(t,x,\alpha\right)\right|_{\mathcal{L}\left(\tilde{H};H\right)}+\left|b_{\alpha}\left(t,x,\alpha\right)\right|_{\mathcal{L}\left(\tilde{H};\mathcal{L}_{2}^{0}\right)}+\left|g_{\alpha}\left(t,x,\alpha\right)\right|_{\mathcal{L}\left(\tilde{H}\right)}\leq C
\end{array}\right.
\]
\end{assumption}

\begin{assumption}
\label{assu:As1}$a\left(\cdot,\cdot,\cdot,\cdot\right):\left[0,T\right]\times H\times H_{1}\times\Omega\rightarrow H$
and $b\left(\cdot,\cdot,\cdot,\cdot\right):\left[0,T\right]\times H\times H_{1}\times\Omega\rightarrow\mathcal{L}_{2}^{0}$
are two maps such that
\begin{enumerate}
\item For any $\left(x,\alpha\right)\in H\times H_{1}$, $a\left(\cdot,x,\alpha,\cdot\right):\left[0,T\right]\times\Omega\rightarrow H$
and $b\left(\cdot,x,\alpha,\cdot\right):\left[0,T\right]\times\Omega\rightarrow\mathcal{L}_{2}^{0}$
are $\mathcal{B}\left(\left[0,T\right]\right)\times\mathcal{F}$ measurable
and $\mathbb{F}-$adapted,
\item For any $\left(t,x,\omega\right)\in\left[0,T\right]\times H\times\Omega$,
$a\left(t,x,\cdot,\omega\right):H_{1}\rightarrow H$ and $b\left(t,x,\cdot,\omega\right):H_{1}\rightarrow\mathcal{L}_{2}^{0}$
are continuous and
\[
\begin{cases}
\left|a\left(t,x_{1},\alpha,\omega\right)-a\left(t,x_{2},\alpha,\omega\right)\right|_{H}+\left|b\left(t,x_{1},\alpha,\omega\right)-b\left(t,x_{2},\alpha,\omega\right)\right|_{\mathcal{L}_{2}^{0}}\leq\\
\hfill C\left|x_{1}-x_{2}\right|_{H}\quad\forall\left(t,x_{1},x_{2},\alpha,\omega\right)\in\left[0,T\right]\times H\times H\times H_{1}\times\Omega\\
\left|a\left(t,0,\alpha,\omega\right)\right|_{H}+\left|b\left(t,0,\alpha,\omega\right)\right|_{\mathcal{L}_{2}^{0}}\leq C,\quad\forall\left(t,\alpha,\omega\right)\in\left[0,T\right]\times H_{1}\times\Omega.
\end{cases}
\]
\end{enumerate}
\end{assumption}

\begin{assumption}
\label{assu:As2}For a.e. $\left(t,\omega\right)\in\left[0,T\right]\times\Omega$,
the functions $a\left(t,\cdot,\cdot,\omega\right):H\times H_{1}\rightarrow H$
and $b\left(t,\cdot,\cdot,\omega\right):H\times H_{1}\rightarrow\mathcal{L}_{2}^{0}$
are differentiable, and $\left(a_{x}\left(t,x,\alpha,\omega\right),a_{\alpha}\left(t,x,\alpha,\omega\right)\right)$
and $\left(b_{x}\left(t,x,\alpha,\omega\right),a_{\alpha}\left(t,x,\alpha,\omega\right)\right)$
are uniformly continuous with respect to $x\in H$ and $\alpha\in U$.
There exists a nonnegative $\eta\in L_{\mathbb{F}}^{2}\left(0,T;\mathbb{R}\right)$
such that for a.e. $\left(t,\omega\right)\in\left[0,T\right]\times\Omega$
and for all $x\in H$ and $\alpha\in H_{1}$,
\[
\begin{cases}
\left|a\left(t,0,\alpha,\omega\right)\right|_{H}+\left|b\left(t,0,\alpha,\omega\right)\right|_{\mathcal{L}_{2}^{0}}\leq C\left(\eta\left(t,\omega\right)+\left|\alpha\right|_{H_{1}}\right),\\
\left|a_{x}\left(t,x,\alpha,\omega\right)\right|_{\mathcal{L}\left(H\right)}+\left|a_{\alpha}\left(t,x,\alpha,\omega\right)\right|_{\mathcal{L}\left(H_{1};H\right)}+\left|b_{x}\left(t,x,\alpha,\omega\right)\right|_{\mathcal{L}\left(H;\mathcal{L}_{2}^{0}\right)}\\
\hfill+\left|b_{\alpha}\left(t,x,\alpha,\omega\right)\right|_{\mathcal{L}\left(H_{1};\mathcal{L}_{2}^{0}\right)}\leq C.
\end{cases}
\]
\end{assumption}

\end{appendices}


\end{document}